\def\R{\mathbb{R}}
\def\N{\mathbb{N}}
\newtheorem{theorem}{Theorem}[section]
\newtheorem{remark}{Remark}[section]
\newtheorem{proposition}{Proposition}[section]
\newtheorem{lemma}{Lemma}[section]
\newtheorem*{notation}{Notation}
\newtheorem{definition}{Definition}[section]
\setlist[description]{style=nextline,labelwidth=0pt,leftmargin=30pt,itemindent=\dimexpr-20pt-\labelsep\relax} % Global Setup Description List
\def\namedlabel#1#2{\begingroup
    #2%
    \def\@currentlabel{#2}%
    \phantomsection\label{#1}\endgroup
}
\newcommand\varitem[1]{\item[(H\arabic{enumi}\rlap{$#1$} )]%
  \edef\@currentlabel{(H\arabic{enumi}{$#1$})}}
\begin{document}

%pour faire de jolies listes
\renewcommand{\labelitemi}{$\bullet$}

%pour définir de jolis produits scalaires (120° et 90°)

\newcommand{\lang}{\begin{picture}(5,7)
\put(1.1,2.5){\rotatebox{45}{\line(1,0){6.0}}}
\put(1.1,2.5){\rotatebox{315}{\line(1,0){6.0}}}
\end{picture}}
\newcommand{\rang}{\begin{picture}(5,7)
\put(.1,2.5){\rotatebox{135}{\line(1,0){6.0}}}
\put(.1,2.5){\rotatebox{225}{\line(1,0){6.0}}}
\end{picture}}

\title{Optimal control of infinite-dimensional piecewise deterministic Markov processes and application to the control of neuronal dynamics via Optogenetics}

\date{}

\author{Vincent Renault, Michèle Thieullen\thanks{Sorbonne Universit\'es, UPMC Univ Paris 06, CNRS UMR 7599, Laboratoire de Probabilit\'es et Mod\`eles Al\'eatoires, F-75005, Paris, France. email: \href{mailto:vincent.renault@upmc.fr}{vincent.renault@upmc.fr}, \href{mailto:michele.thieullen@upmc.fr}{michele.thieullen@upmc.fr}}, Emmanuel Trélat\thanks{Sorbonne Universit\'es, UPMC Univ Paris 06, CNRS UMR 7598, Laboratoire Jacques-Louis Lions, Institut Universitaire de France, F-75005, Paris, France. email: \href{mailto:emmanuel.trelat@upmc.fr}{emmanuel.trelat@upmc.fr}}}

%\author[1]{Vincent Renault, Michèle Thieullen\thanks{Sorbonne Universit\'es, UPMC Univ Paris 06, CNRS UMR 7599, Laboratoire de Probabilit\'es et Mod\`eles Al\'eatoires, F-75005, Paris, France. email: vincent.renault@upmc.fr, michele.thieullen@upmc.fr}}
%\author[1]{Michèle Thieullen\thanks{Sorbonne Universit\'es, UPMC Univ Paris 06, CNRS UMR 7599, Laboratoire de Probabilit\'es et Mod\`eles Al\'eatoires, IF-75005, Paris, France. email: michele.thieullen@upmc.fr}}
%\author[1]{Emmanuel Trélat\thanks{Sorbonne Universit\'es, UPMC Univ Paris 06, CNRS UMR 7598, Laboratoire Jacques-Louis Lions, Institut Universitaire de France, F-75005, Paris, France. email: emmanuel.trelat@upmc.fr}}
%\affil[1]{}
%\affil[2]{}

\maketitle

\begin{abstract}

In this paper we define an infinite-dimensional controlled piecewise deterministic Markov process (PDMP) and we study an optimal control problem with finite time horizon and unbounded cost. This process is a coupling between a continuous time Markov Chain and a set of semilinear parabolic partial differential equations, both processes depending on the control. We apply dynamic programming to the embedded Markov decision process to obtain existence of optimal relaxed controls and we give some sufficient conditions ensuring the existence of an optimal ordinary control. This study, which constitutes an extension of controlled PDMPs to infinite dimension,  is motivated by the control that provides Optogenetics on neuron models such as the Hodgkin-Huxley model. We define an infinite-dimensional controlled Hodgkin-Huxley model as an infinite-dimensional controlled piecewise deterministic Markov process and apply the previous results to prove the existence of optimal ordinary controls for a tracking problem.

\smallskip
\noindent \textbf{Keywords.} Piecewise Deterministic Markov Processes, optimal control, semilinear parabolic equations, dynamic programming, Markov Decision Processes, Optogenetics.

\smallskip
\noindent \textbf{AMS Classification.} 93E20. 60J25. 35K58. 49L20. 92C20. 92C45. 

\end{abstract}

%\tableofcontents

\section*{Introduction}

Optogenetics is a recent and innovative technique which allows to induce or prevent electric shocks in living tissues, by means of light stimulation. Successfully demonstrated in mammalian neurons in 2005 (\cite{millisecondTimescale}), the technique relies on the genetic modification of cells to make them express particular ionic channels, called rhodopsins, whose opening and closing are directly triggered by light stimulation. One of these rhodopsins comes from an unicellular flagellate algae, \textit{Chlamydomonas reinhardtii}, and has been baptized Channelrodhopsins-2 (ChR2). It is a cation channel that opens when illuminated with blue light. 

Since the field of Optogenetics is young, the mathematical modeling of the phenomenon is quite scarce. Some models have been proposed, based on the study of the photocycles initiated by the absorption of a photon. In 2009, Nikolic and al. \cite{Nikolic} proposed two models for the ChR2 that are able to reproduce the photocurrents generated by the light stimulation of the channel. Those models are constituted of several states that can be either conductive (the channel is open) or non-conductive (the channel is closed). Transitions between those states are spontaneous, depend on the membrane potential or are triggered by the absorption of a photon.
For example, the four-states model of Nikolic and al. \cite{Nikolic} has two open states ($o_1$ and $o_2$) and two closed states ($c_1$ and $c_2$). Its transitions are represented on Figure \ref{ChR2figure} 

\begin{figure}[!h]
\begin{center}
\begin{tikzpicture}[->,>=stealth',shorten >=1pt,auto,node distance=3cm,
  thick,main node/.style={circle,draw,font=\sffamily\Large\bfseries}]

  \node[main node] (1) {$o_1$};
  \node[main node] (2) [right of=1] {$o_2$};
  \node[main node] (3) [below of=2] {$c_2$};
  \node[main node] (4) [below of=1] {$c_1$};

  \path[every node/.style={font=\sffamily\small}]
    (1) edge [bend left] node [right] {$K_{d1}$} (4)
        edge [bend right] node [above] {$e_{12}$} (2)
    (2) edge [bend right] node [above] {$e_{21}$} (1)
        edge [bend right] node[right] {$K_{d2}$} (3)
    (3) edge [bend right] node [right] {$\varepsilon_2 u(t)$} (2)
        edge node {$K_r$} (4)
    (4)edge [bend left] node[left] {$\varepsilon_1 u(t)$} (1);

   \draw [dashed] (-2,-2) -- (-1,-2) node [below left] {$light$};
    \draw [dashed] (5,-2) -- (4,-2) node [below right] {$light$} ;

\end{tikzpicture}
\caption{Simplified four states ChR2 channel : $\varepsilon_1$, $\varepsilon_2$, $e_{12}$, $e_{21}$, $K_{d1}$, $K_{d2}$ and $K_r$ are positive constants.}\label{ChR2figure}
\end{center}
\end{figure}
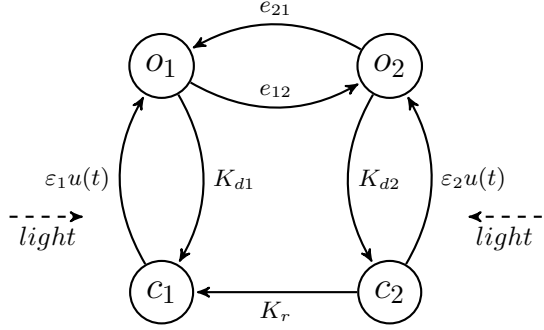

The purpose of this paper is to extend to infinite dimension the optimal control of \textit{Piecewise Deterministic Markov Processes} (PDMPs) and to define an infinite-dimensional controlled Hodgkin-Huxley model, containing ChR2 channels, as an infinite-dimensional controlled PDMP and prove existence of optimal ordinary controls. We now give the definition of the model. 

We consider an axon, described as a 1-dimensional cable and we set $I=[0,1]$ (the more physical case $I=[-l,l]$ with $2l>0$ the length of the axon is included here by a scaling argument).   Let $D_{ChR2} := \{o_1,o_2,c_1,c_2\}$. Individually, a ChR2 features a stochastic evolution which can be properly described by a Markov Chain on the finite space constituted of the different states that the ChR2 can occupy. In the four-states model above, two of the transitions are triggered by light stimulation, in the form of a parameter $u$ that can evolve in time. Here $u(t)$ is physically proportional to the intensity of the light with which the protein is illuminated. For now, we will consider that when the control is on (\textit{i.e.}, when the light is on), the entire axon is uniformly illuminated. Hence for all $t\geq 0$, $u(t)$ features no spatial dependency. 

The deterministic Hodgkin-Huxley model was introduced in \cite{HH}. A stochastic infinite-dimensional model was studied in \cite{Austin}, \cite{Riedler}, \cite{GenadotAverage} and \cite{WainribRiedler}. The Sodium ($Na^+$) channels and Potassium ($K^+$) channels are described by two pure jump processes with state spaces $D_1:=\{n_0,n_1,n_2,n_3,n_4\}$ and \\ $D_2:=\{m_0h_1,m_1h_1,m_2h_1,m_3h_1,m_0h_0,m_1h_0,m_2h_0,m_3h_0\}.$

For a given scale $N\in \mathbb{N^*}$, we consider that the axon is populated by $N_{hh}=N-1$ channels of type $Na^+$, $K^+$ or $ChR2$, at positions $\frac{1}{N}(\mathbb{Z}\cap N \mathring{I})$. In the sequel we will use the notation $I_N:=\mathbb{Z}\cap N \mathring{I}$. We consider the Gelfand triple ($V,H,V^*$) with $V := H_0^1(I)$ and $H :=L^2(I)$. The process we study is defined as a controlled infinite-dimensional \textit{Piecewise Deterministic Markov Process} (PDMP). All constants and auxiliary functions in the next definition will be defined further in the paper.

\begin{definition}\label{stoInfiniteHH} \textbf{Stochastic controlled infinite-dimensional Hodgkin-Huxley-ChR2 model}.
Let $N\in \mathbb{N^*}$. We call $N^{\mathrm{th}}$ stochastic controlled infinite-dimensional Hodgkin-Huxley-ChR2 model the controlled PDMP $(v(t),d(t))\in V\times D_N$ defined by the following characteristics: 
\begin{itemize}

\item A state space $V\times D_N$ with $D_N = D^{I_N}$ and $D= D_1\cup D_2 \cup D_{ChR2}$.

\item A control space $U=[0,u_{max}]$, $u_{max}>0$.

\item A set of uncontrolled PDEs:
For every $d\in D_N$,
\begin{equation}\label{PDEHHsto}
\left\{
\begin{aligned}
v'(t) &= \frac{1}{C_m}\Delta v(t) + f_d(v(t)),\\
v(0) &= v_0 \in V, \quad v_0(x) \in [V_-,V_+] \quad \forall x\in I,\\
v(t,0) &=v(t,1)=0, \quad \forall t > 0,
\end{aligned}
\right.
\end{equation}

with 

\begin{align}
 \mathcal{D}(\Delta) &= V,\nonumber\\
f_d(v) &:= \frac{1}{N}\sum_{i\in I_N} \Big( g_K\mathbf{1}_{\{d_i=n_4\}} (V_K-v(\frac{i}{N})) + g_{Na} \mathbf{1}_{\{d_i=m_3h_1\}}(V_{Na}-v(\frac{i}{N}))\label{fdHH} \\
& \qquad+ g_{ChR2}( \mathbf{1}_{\{d_i=O_1\}}+\rho  \mathbf{1}_{\{d_i=O_2\}})(V_{ChR2}-v(\frac{i}{N}))+ g_L(V_L-v(\frac{i}{N}))\Big) \delta_{\frac{i}{N}}\nonumber,
\end{align}

with $\delta_z\in V^*$ the Dirac mass at $z\in I$. 
\item A jump rate function $\lambda : V\times D_N\times U\rightarrow \mathbb{R}_+$ defined for all $(v,d,u)\in H\times D_N\times U$ by 

\begin{equation}
\lambda_d(v,u) = \sum_{i\in I_N} \sum_{x\in D}\sum_{\substack{y\in D,\\ y\neq x}} \sigma_{x,y}(v(\frac{i}{N}),u)\mathbf{1}_{\{d_i=x\}}, 
\end{equation}

with $\sigma_{x,y} :\mathbb{R}\times U\rightarrow \mathbb{R}_+^{*}$  smooth functions for all $(x,y)\in D^2$. See Table \ref{jumprates} in Section \ref{ProofTheoSubsec} for the expression of those functions. 

\item A discrete transition measure $\mathcal{Q}:V\times D_N\times U\rightarrow \mathcal{P}(D_N)$ defined for all  $(v,d,u)\in E\times D_N\times U$ and $y\in D$  by 

\begin{equation}
\mathcal{Q}(\{d^{i:y}\}|v,d) = \frac{\sigma_{d_i,y}(v(\frac{i}{N}),u)\mathbf{1}_{\{d_i\neq y\}}}{\lambda_d(v,u)},
\end{equation}

where $d^{i:y}$ is obtained from $d$ by putting its $i^{\mathrm{th}}$ component equal to $y$.
\end{itemize}

\end{definition}

From a biological point of view, the optimal control problem consists in mimicking an output signal that encodes a given biological behavior, while minimizing the intensity of the light applied to the neuron. For example, it can be a time-constant signal and in this case, we want to change the resting potential of the neuron to study its role on its general behavior. We can also think of pathological behaviors that would be fixed in this way. The minimization of light intensity is crucial because the range of intensity experimentally reachable is quite small and is always a matter of preoccupation for experimenters. These considerations lead us to formulate the following mathematical optimal control problem.

Suppose we are given a \textit{reference signal} $V_{ref}\in V$. The control problem is then to find $\alpha \in \mathcal{A}$ that minimizes the following expected cost 

\begin{equation}\label{HHExpectedcostFunction}
J_z(\alpha) = \mathbb{E}_z^{\alpha}\left[\int_0^T \left(\kappa||X_t^{\alpha}(\phi)-V_{ref}||_V^2+\alpha(X_t^{\alpha})\right)\mathrm{d}t \right], \quad z\in\Upsilon,
\end{equation}

where $\mathcal{A}$ is the space of control strategies, $\Upsilon$ an auxiliary state space that comprises $V\times D_N$, $X_{\cdot}^{\alpha}$ is the controlled PDMP and $X_{\cdot}^{\alpha}(\phi)$ its continuous component.

We will prove the following result.

\begin{theorem}\label{ChR2TheoremExistenceOptimalControl}
Under the assumptions of Section \ref{EnlargedSpace}, there exists an optimal control strategy $\alpha^*\in\mathcal{A}$ such that for all $z\in\Upsilon$,
\[
J_z(\alpha^*) = \inf_{\alpha\in\mathcal{A}} \mathbb{E}_z^{\alpha}\left[\int_0^T \left(\kappa||X_t^{\alpha}(\phi)-V_{ref}||_V^2+\alpha(X_t^{\alpha})\right)\mathrm{d}t \right],
\]

and the value function $z\rightarrow \inf_{\alpha\in\mathcal{A}} J_z(\alpha)$ is continuous on $\Upsilon$.
	
\end{theorem}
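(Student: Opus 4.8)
The plan is to realize the controlled Hodgkin-Huxley-ChR2 model of Definition~\ref{stoInfiniteHH} as a particular instance of the abstract infinite-dimensional controlled PDMP studied in the body of the paper, and then invoke the general existence result obtained there via dynamic programming on the embedded Markov decision process (MDP). Concretely, I would proceed in three stages: first, verify that the data $(\Delta/C_m, f_d, \lambda_d, \mathcal{Q})$ satisfy the standing hypotheses of Section~\ref{EnlargedSpace} (well-posedness of the semilinear parabolic flow, regularity and non-degeneracy of the jump rate, continuity of the transition kernel, and the growth/continuity conditions on the running cost); second, apply the abstract theorem to get an optimal \emph{relaxed} control; third, use the structural features of this specific model — in particular the affine dependence of $f_d$ on the control (the control enters only through $\lambda_d$ and $\mathcal{Q}$) and the convexity of the cost in the control variable — to invoke the sufficient conditions from the general theory guaranteeing that the optimal relaxed control can be taken \emph{ordinary}.

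In more detail: for the first stage I would check that for each fixed $d\in D_N$, the map $v\mapsto f_d(v)$ is well-defined and Lipschitz from $V$ into $V^*$ (or $H$), using that point evaluations $v\mapsto v(i/N)$ are continuous on $V=H^1_0(I)$ by Sobolev embedding in dimension one, so that $\delta_{i/N}\in V^*$ and the nonlinearity is a bounded affine function of finitely many such evaluations; this gives global existence, uniqueness, and continuous dependence on $v_0$ for the mild/variational solution of \eqref{PDEHHsto}, and one must check the invariance of the physical box $[V_-,V_+]$ (or at least an a priori $L^\infty$ bound) so that the rates $\sigma_{x,y}(v(i/N),u)$ stay in a compact set. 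Then I would verify that $\lambda_d$ is bounded above and below by positive constants (it is a finite sum of the strictly positive smooth functions $\sigma_{x,y}$ evaluated on a compact set), that $(v,u)\mapsto\lambda_d(v,u)$ and $(v,u)\mapsto\mathcal{Q}(\cdot|v,d,u)$ are continuous, and that the running cost $(t,X)\mapsto \kappa\|X(\phi)-V_{ref}\|_V^2+\alpha(X)$ is nonnegative, lower semicontinuous, and satisfies the growth condition used in the general existence theorem — here one needs the auxiliary enlarged state space $\Upsilon$ precisely to carry the $\|\cdot\|_V$-norm of the continuous component as an extra coordinate, since $\|X_t(\phi)\|_V$ is not bounded uniformly and the cost is therefore unbounded.

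The second and third stages are then essentially a citation of the abstract results: the embedded MDP has a compact-in-an-appropriate-sense action space of relaxed controls, the one-step transition operator is continuous, and Bellman's equation admits a continuous solution by a fixed-point/successive-approximation argument, yielding an optimal relaxed strategy and continuity of $z\mapsto\inf_\alpha J_z(\alpha)$ on $\Upsilon$. To descend to an ordinary control I would check the ``convexity of the velocity/cost set'' condition: since the control does not appear in the PDE $f_d$ at all and appears affinely (through $\sigma$) in the jump mechanism, while the instantaneous cost is convex (indeed affine) in $u$, the relevant set of (drift, jump measure, cost) triples is convex, so a measurable-selection argument (Filippov-type) produces an ordinary control with the same or lower cost.

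The main obstacle I anticipate is the first stage, and within it the verification of the hypotheses that are genuinely infinite-dimensional: namely that the parabolic semiflow together with the point-evaluation nonlinearity fits the functional-analytic framework (Gelfand triple, analytic semigroup generated by $\Delta/C_m$ on $H$ with domain $V$) required by Section~\ref{EnlargedSpace}, and that the state after a jump (which only changes $d$, not $v$) keeps $v$ in the region where the a priori bounds hold so that the post-jump flow is again well-posed — i.e. propagating the $[V_-,V_+]$ invariance across infinitely many potential jumps on $[0,T]$. Establishing this invariance (a maximum-principle argument for the semilinear equation, using the sign structure of the ionic terms $g_\bullet(V_\bullet - v)$) and checking that the resulting bounded data meet every continuity/compactness requirement of the abstract theorem is where the real work lies; once that is in place, Theorem~\ref{ChR2TheoremExistenceOptimalControl} follows from the general machinery.
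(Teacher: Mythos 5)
Your proposal is correct and follows essentially the same route as the paper: verify the standing assumptions of Section~\ref{EnlargedSpace} for the Hodgkin-Huxley-ChR2 characteristics (using the invariance of $[V_-,V_+]$ to bound the jump rates, and handling the point-evaluation nonlinearity either by mollifying the Dirac masses or by working in $V$ as the paper does in Section~\ref{VariationSection}), then invoke Theorem~\ref{TheoremExistenceOptimalControl} for an optimal relaxed strategy and the convexity/linearity conditions \ref{HypOrdinaryControlExists} of Theorem~\ref{TheoExistenceOrdinaryOptimalControl} to pass to an ordinary one. The only cosmetic differences are that the paper obtains the ordinary control as the barycenter $\bar{\mu}_t=\int_Z u\,\mu^*_t(\mathrm{d}u)$ rather than via a Filippov-type selection, and that the enlarged space $\Upsilon$ records jump times (to make strategies feedback), not the $V$-norm of the state.
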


Piecewise Deterministic Markov Processes constitute a large class of Markov processes suited to describe a tremendous variety of phenomena such as the behavior of excitable cells (\cite{Austin},\cite{Riedler},\cite{WainribReduction}), the evolution of stocks in financial markets (\cite{RiederAppli}) or the congestion of communication networks (\cite{Dumas}), among many others. PDMPs can basically describe any non diffusive Markovian system. The general theory of PDMPs, and the tools to study them, were introduced by Davis (\cite{Davis84}) in 1984, at a time when the theory of diffusion was already amply developed. Since then, they have been widely investigated in terms of asymptotic behavior, control, limit theorems and CLT, numerical methods, among others (see for instance \cite{Brandejsky}, \cite{CostaDufourStability}, \cite{CostaDufourSingular}, \cite{Crudu} and references therein). PDMPs are jump processes coupled with a deterministic evolution between the jumps. They are fully described by three local characteristics: the deterministic flow $\phi$, the jump rate $\lambda$, and the transition measure $\mathcal{Q}$. In \cite{Davis84}, the temporal evolution of a PDMP between jumps (i.e. the flow $\phi$) is governed by an Ordinary Differential Equation (ODE). For that matter, this kind of PDMPs will be referred to as finite-dimensional in the sequel.

Optimal control of such processes have been introduced by Vermes (\cite{Vermes}) in finite dimension. In \cite{Vermes}, the class of \textit{piecewise open-loop} controls is introduced as the proper class to consider to obtain strongly Markovian processes. A Hamilton-Jabobi-Bellman equation is formulated and necessary and sufficient conditions are given for the existence of optimal controls. The standard broader class of so-called \textit{relaxed} controls is considered and it plays a crucial role in getting the existence of optimal controls when no convexity assumption is imposed. This class of controls has been studied, in the finite-dimensional case, by Gamkrelidze (\cite{Gamkrelidze}),  Warga (\cite{Warga62A} and \cite{Warga62B}) and Young (\cite{YoungBook}). Relaxed controls provide a compact class that is adequate for studying optimization problems. Still in finite dimension, many control problems have been formulated and studied such as optimal control (\cite{Forwick}), optimal stopping (\cite{CostaOptimalStop}) or controllability (\cite{GoreacControllability}). In infinite dimension, relaxed controls were introduced by Ahmed (\cite{Ahmed83}, \cite{Ahmed78}, \cite{Ahmed93}). They were also studied by Papageorgiou in \cite{Papageorgiou} where the author shows the strong continuity of relaxed trajectories with respect to the relaxed control. This continuity result will be of great interest in this paper.

A formal infinite-dimensional PDMP was defined in \cite{Riedler} for the first time, the set of ODEs being replaced by a special set of Partial Differential Equations (PDE). The extended generator and its domain are provided and the model is used to define a stochastic spatial Hodgkin-Huxley model of neuron dynamics. The optimal control problem we have in mind here regards those Hodgkin-Huxley type models. Seminal work on an uncontrolled infinite-dimensional Hodgkin-Huxley model was conducted in \cite{Austin} where the trajectory of the infinite-dimensional stochastic system is shown to converge to the deterministic one, in probability. This type of model has then been studied in \cite{WainribRiedler} in terms of limit theorems and in \cite{GenadotAverage} in terms of averaging. The extension to infinite dimension heavily relies on the fact that semilinear parabolic equations can be interpreted as ODEs in Hilbert spaces.

\medskip

To give a sense to Definition \ref{stoInfiniteHH} and to Theorem \ref{ChR2TheoremExistenceOptimalControl}, we will define a controlled infinite-dimensional PDMP for which the control acts on the three local characteristics. We consider controlled semilinear parabolic PDEs, jump rates $\lambda$ and transition measures $\mathcal{Q}$ depending on the control. This kind of PDE takes the form

\[
\dot{x}(t) = Lx(t) + f(x(t),u(t)),
\]
where $L$ is the infinitesimal generator of a strongly continuous semigroup and $f$ is some function (possibly nonlinear). The optimal control problem we address is the finite-time minimization of an unbounded expected cost functional along the trajectory of the form

\[
\min_{u} \mathbb{E} \int_0^T c(x(t),u(t))\mathrm{d}t,
\]
where $x(\cdot)$ is the continuous component of the PDMP, $u(\cdot)$ the control and $T>0$ the finite time horizon, the cost function $c(\cdot,\cdot)$ being potentially unbounded.

To address this optimal control problem, we use the fairly widespread approach that consists in studying the imbedded discrete-time Markov chain composed of the times and the locations of the jumps. Since the evolution between jumps is deterministic, there exists a one-to-one correspondence between the PDMP and a pure jump process that enable to define the imbedded Markov chain. The discrete-time Markov chain belongs to the class of \textit{Markov Decision Processes} (MDPs). This kind of approach has been used in \cite{Forwick} and \cite{Rieder} (see also the book \cite{MDPBook} for a self-contained presentation of MDPs). In these articles, the authors apply dynamic programming to the MDP derived from a PDMP, to prove the existence of optimal relaxed strategies. Some sufficient conditions are also given to get non-relaxed, also called ordinary, optimal strategies. However, in both articles, the PDMP is finite dimensional. To the best of our knowledge, the optimal control of infinite-dimensional PDMPs has not yet been treated and this is one of our main objectives here, along with its motivation, derived from the Optogenetics, to formulate and study infinite-dimensional controlled neuron models.

\medskip

The paper is structured as follows. In Section \ref{DefSection} we adapt the definition of a standard infinite-dimensional PDMP given in \cite{Riedler} in order to address control problems of such processes. To obtain a strongly Markovian process, we enlarge the state space and we prove an extension to controlled PDMPs of \cite[Theorem 4]{Riedler}. We also define in this section the MDP associated to our controlled PDMP and that we study later on.  
In Section \ref{RelaxControlSection} we use the results of \cite{Papageorgiou} to define relaxed controlled PDMPs and relaxed MDPs in infinite dimension. 
Section \ref{ResultSection} gathers the main results of the paper. We show that the optimal control problems of PDMPs and of MDPs are equivalent. We build up a general framework in which the MDP is contracting. The value function is then shown to be continuous and existence of optimal relaxed control strategies is proved. We finally give in this section, some convexity assumptions under which an ordinary optimal control strategy can be retrieved.

The final Section \ref{OptoSection} is devoted to showing that the previous theoretical results apply to the model of Optogenetics previously introduced. Several variants of the model are discussed, the scope of the theoretical results being much larger than the model of Definition \ref{stoInfiniteHH}.

\section{Theoretical framework for the control of infinite-dimensional PDMPs } \label{DefSection}
\markboth{Theoretical framework}{}
\subsection{The enlarged process and assumptions}\label{EnlargedSpace}

In the present section we define the infinite-dimensional controlled PDMPs that we consider in this paper in a way that enables us to formulate control problems in which the three characteristics of the PDMP depend on an additional variable that we call the control parameter. In particular we introduce the {\it enlarged process} which enable us to address optimization problems in the subsequent sections.

Let $(\Omega,\mathcal{F},(\mathcal{F}_t)_{t\geq 0},\mathbb{P})$ be a filtered probability space satisfying the usual conditions. We consider a Gelfand triple ($V \subset H \subset V^*$) such that $H$ is a separable Hilbert space and $V$ a separable, reflexive Banach space continuously and densely embedded in $H$. The pivot space $H$ is identified with its dual $H^*$, $V^*$ is the topological dual of $V$. $H$ is then continuously and densely embedded in $V^*$. We will denote by $||\cdot||_V$, $||\cdot||_H$, and  $||\cdot||_{V^*}$ the norms on $V$, $H$, and $V^*$, by $(\cdot,\cdot)$ the inner product in $H$ and by $\langle\cdot,\cdot \rangle$ the duality pairing of ($V,V^*$). Note that for $v\in V$ and $h\in H$, $\langle h, v \rangle = (h,v)$.

Let $D$ be a finite set, the state space of the discrete variable and $Z$ a compact Polish space, the control space. Let $T>0$ be the finite time horizon. Intuitively a controlled PDMP $(v_t,d_t)_{t\in[0,T]}$ should be constructed on $H\times D$ from the space of \textit{ordinary control rules} defined as
\begin{equation*}
A := \{ a : (0,T) \rightarrow U \text{ measurable}\},
\end{equation*}
where $U$, the \textit{action space}, is a closed subset of $Z$. Elements of $A$ are defined up to a set in $[0,T]$ of Lebesgue measure 0. The control rules introduced above are called \textit{ordinary} in contrast with the \textit{relaxed} ones that we will introduce and use in order to prove existence of optimal strategies.  When endowed with the coarsest $\sigma$-algebra such that 
\begin{equation*}
a \rightarrow \int_0^T e^{-t}w(t,a(t)) dt 
\end{equation*}
is measurable for all bounded and measurable functions $w:\mathbb{R}_+ \times U \rightarrow \mathbb{R}$, the set of control rules $A$ becomes a Borel space (see \cite[Lemma 1]{Yushkevich}). This will be crucial for the discrete-time control problem that we consider later. Conditionally to the continuous component $v_t$ and the control $a(t)$, the discrete component $d_t$ is a continuous-time Markov chain given by a jump rate function  $\lambda : H\times D\times U\rightarrow \mathbb{R}_+$ and a transition measure $\mathcal{Q} : H\times D\times U\rightarrow \mathcal{P}(D)$.

Between two consecutive jumps of the discrete component, the continuous component $v_t$ solves a controlled semilinear parabolic PDE
\begin{equation}\label{PDE}
\left\{
\begin{aligned}
\dot{v}_t  &= -L v_t + f_d(v_t,a(t)),\\
v_0 &= v, \quad v \in V.
\end{aligned}
\right.
\end{equation}
For $(v,d,a)\in H\times D\times A$ we will denote by $\phi^a(v,d)$ the flow of (\ref{PDE}). Let $T_n,n\in\mathbb{N}$ be the jump times of the PDMP. Their distribution is then given by 

\begin{equation}\label{jumpRate}
\mathbb{P}[d_{t+s} = d_t, 0\leq s \leq \Delta t | d_t] = \exp\left(-\int_0^{\Delta t} \lambda\Big(\phi^a_{t+s-T_n}(v_{T_n},d_{T_n}),d_t,a(t+s-T_n)\Big)\mathrm{d}s\right),
\end{equation}		
for $t\in[T_n;T_{n+1})$. When a jump occurs, the distribution of the post jump state is given by 

\begin{equation}\label{jumpMeasure}
\mathbb{P}[d_t = d | d_{t^-} \neq d_t] = \mathcal{Q}(\{d\}|d_t,v_t,a(t)).
\end{equation}

The triple ($\lambda, \mathcal{Q}, \phi$) fully describes the process and is referred to as the local characteristics of the PDMP.

\medskip

We will make the following assumptions on the local characteristics of the PDMP.

\begin{description}
\item[\namedlabel{hyp:lambda}{(H($\lambda$))}] For every $d\in D$, $\lambda_d: H\times Z \rightarrow \R_+$ is a function such that:
\begin{enumerate}
\item There exists $M_\lambda, \delta >0 $  such that: 
\[
\delta \leq \lambda_d(x,z) \leq M_\lambda,\quad \forall (x,z)\in H\times Z.
\]
\item $z\rightarrow \lambda_d(x,z)$ is continuous on $Z$, for all $x\in H$.
\item $x\rightarrow \lambda_d(x,z)$ is locally Lipschitz continuous, uniformly in $Z$, that is, for every compact set $K\subset H$, there exists $l_{\lambda}(K)>0$ such that
\[
|\lambda_d(x,z)-\lambda_d(y,z)| \leq l_{\lambda}(K) ||x-y||_H \quad \forall (x,y,z)\in K^2\times Z.
\]

\end{enumerate}
 
\item[\namedlabel{hyp:Q}{(H($\mathcal{Q}$))}] The function $\mathcal{Q} : H\times D\times Z\times \mathcal{B}(D) \rightarrow [0,1]$ is a transition probability such that:
$(x,z)\rightarrow \mathcal{Q}(\{p\} | x,d,z)$ is continuous for all $(d,p)\in D^2$ (weak continuity) and $\mathcal{Q}(\{d\}|x,d,z)=0$ for all $(x,z)\in H\times Z $.
\item[\namedlabel{hyp:L}{(H($L$))}] $L:V\rightarrow V^*$ is such that: 
\begin{enumerate}
\item $L$ is linear, monotone;
\item $||Lx||_{V^*}\leq c + c_1 ||x||_V$ with $c>0$ and $c_1\geq 0$;
\item $\langle Lx,x \rangle \geq c_2 ||x||_{V}^2,\quad c_2 > 0$;
\item $-L$ generates a strongly continuous semigroup $(S(t))_{t\geq 0}$ on $H$ such that $S(t):H\to H$ is compact for every $t>0$. We will denote by $M_S$ a bound, for the operator norm, of the semigroup on $[0,T]$.
\end{enumerate}
\item[\namedlabel{hyp:f}{(H($f$))}] For every $d\in D$, $f_d:H\times Z \rightarrow H$ is a function such that:
\begin{enumerate}

\item $x\rightarrow f_d(x,z)$ is Lipschitz continuous, uniformly in $Z$, that is,
\[
||f_d(x,z)-f_d(y,z)||_H \leq l_{f} ||x-y||_H \quad \forall (x,z)\in H\times Z, \quad l_f > 0.
\]
\item $(x,z) \rightarrow f_d(x,z)$ is continuous from $H\times Z$ to $H_w$, where $H_w$ denotes the space  $H$ endowed with the topology of weak convergence.
\end{enumerate}
\end{description}

Let us make some comments on the assumptions above. Assumption \ref{hyp:lambda}1. will ensure that the process is \textit{regular}, i.e. the number of jumps of $d_t$ is almost surely finite in every finite time interval. Assumption \ref{hyp:lambda}2. will enable us to construct relaxed trajectories. Assumptions \ref{hyp:lambda}3. and \ref{hyp:Q} will be necessary to obtain the existence of optimal relaxed controls for the associated MDP. Assumptions \ref{hyp:L}1.2.3. \ref{hyp:f} will ensure the existence and uniqueness of the solution of (\ref{PDE}). Note that all the results of this paper are unchanged if assumption \ref{hyp:f}1 is replaced by 

\begin{description}
\item[\namedlabel{hyp:fprim}{(H(f))'}] For every $d\in D$, $f_d:H\times Z \rightarrow H$ is a function such that:
\begin{enumerate}
\item $x\rightarrow -f_d(x,z)$ is continuous monotone, for all $z\in Z$.
\item $||f_d(x,z)||_H \leq b_1 + b_2||x||_H, \quad b_1\geq 0, b_2>0$, for all $z\in Z$.
\end{enumerate}
\end{description}

In particular, assumption \ref{hyp:f} implies \ref{hyp:fprim}2. and we will use the constants $b_1$ and $b_2$ further in this paper. Note that they can be chosen uniformly in $D$ since it is a finite set. To see this, note that $z\to f_d(0,z)$ is a weakly continuous on the compact space $Z$ and thus weakly bounded. It is then strongly bounded by the Uniform Boundedness Principle.

Finally, assumptions \ref{hyp:f}3. and \ref{hyp:L}4. will respectively ensure the existence of relaxed solutions of (\ref{PDE}) and the strong continuity of theses solutions with regards to the relaxed control. For that last matter, the compactness of $Z$ is also required. The following theorem is a reminder that the assumption on the semigroup does not make the problem trivial since it implies that $L$ is unbounded when $H$ is infinite-dimensional.

\begin{theorem}{(see \cite[Theorem 4.29]{Engel_Nagel_Semigroups})}\label{ImmediateCompactness}
\begin{enumerate}
\item For a strongly continuous semigroup $(T(t))_{t\geq 0}$ the following properties are equivalent
\begin{enumerate}
\item $(T(t))_{t\geq 0}$ is immediately compact.
\item $(T(t))_{t\geq 0}$ is immediately norm continuous, and its generator has compact resolvent.
\end{enumerate}
\item Let $X$ be a Banach space. A bounded operator $A\in\mathcal{L}(X)$ has compact resolvent if and only if $X$ is finite-dimensional. 
\end{enumerate}
\end{theorem}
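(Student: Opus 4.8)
Both assertions follow from elementary semigroup theory, and the plan is to prove them in turn, the second one feeding into one implication of the first. For the second assertion: if $\dim X<\infty$ then every bounded operator on $X$ is compact and $\rho(A)\neq\varnothing$, so $R(\lambda,A)$ is automatically compact; conversely, if $A\in\mathcal{L}(X)$ and $R(\lambda,A)$ is compact for some $\lambda\in\rho(A)$, then from $(\lambda-A)R(\lambda,A)=I$ we obtain $I=\lambda R(\lambda,A)-AR(\lambda,A)$, which is compact since the compact operators form a two-sided ideal and $A$ is bounded; as the identity of $X$ can be compact only when $X$ is finite-dimensional (Riesz's lemma), this gives $\dim X<\infty$.

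Now the first assertion. Assume first that $T(t)$ is compact for every $t>0$; write $B_X$ for the closed unit ball of the underlying space. I would start by upgrading strong continuity to norm continuity on $(0,\infty)$: for $t_0>0$ and $h\downarrow0$ one has $T(t_0+h)-T(t_0)=(T(h)-I)T(t_0)$, and since $T(t_0)B_X$ is relatively compact while $T(h)\to I$ strongly (hence uniformly on relatively compact sets), $\|T(t_0+h)-T(t_0)\|\to0$; for the left limit, $T(t_0)-T(t_0-h)=(T(h)-I)T(t_0-h)$ with $T(t_0-h)=T(t_0/2)T(t_0/2-h)$ for $0<h<t_0/2$ and $\sup_{0<h<t_0/2}\|T(t_0/2-h)\|<\infty$, so $\bigcup_{0<h<t_0/2}T(t_0-h)B_X$ is relatively compact and the same argument applies. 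With norm continuity in hand, for $\mathrm{Re}\,\lambda$ large one has $R(\lambda,A)=\int_0^\infty e^{-\lambda t}T(t)\,dt$; splitting the integral at a small $\varepsilon>0$, the tail $\int_\varepsilon^\infty e^{-\lambda t}T(t)\,dt$ is an operator-norm limit of Riemann sums of compact operators, hence compact, while $\big\|\int_0^\varepsilon e^{-\lambda t}T(t)\,dt\big\|\le M_S\,\varepsilon\to0$; therefore $R(\lambda,A)$ is a norm limit of compact operators, hence compact, and every resolvent is compact by the resolvent identity.

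For the converse, assume the semigroup is immediately norm continuous and $A$ has compact resolvent, and fix $\lambda\in\rho(A)$. For $x\in D(A)$ one computes $\frac{d}{dr}\big(R(\lambda,A)T(r)x\big)=R(\lambda,A)AT(r)x=(\lambda R(\lambda,A)-I)T(r)x$; integrating from $0$ to $s$ and extending from $D(A)$ to $X$ by density yields the operator identity
\[
\int_0^s T(r)\,dr=R(\lambda,A)-R(\lambda,A)T(s)+\lambda R(\lambda,A)\int_0^s T(r)\,dr .
\]
Each term on the right-hand side is $R(\lambda,A)$ composed with a bounded operator, hence compact, so $\int_0^s T(r)\,dr$ is compact for every $s>0$. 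Finally, for $t>0$ and $0<h<t$, the operator $\frac1h\int_{t-h}^{t}T(r)\,dr=\frac1h\big(\int_0^{t}-\int_0^{t-h}\big)T(r)\,dr$ is compact, and by norm continuity of $r\mapsto T(r)$ at $t$ it converges to $T(t)$ in operator norm as $h\downarrow0$; hence $T(t)$ is a norm limit of compact operators and is compact.

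I expect the only slightly delicate point to be the two-sided norm continuity in the direction (a)$\Rightarrow$(b), and within it the left-hand limit, where one has to produce one relatively compact set capturing all the images $T(t_0-h)B_X$; the rest only relies on the compact operators forming a norm-closed ideal and on strong convergence being uniform on compact sets.
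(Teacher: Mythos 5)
Your argument is correct: both directions of part 1 (upgrading strong to norm continuity via compactness of $T(t_0)B_X$ and uniform convergence on compact sets, compactness of the Laplace-transform representation of the resolvent, and conversely recovering $T(t)$ as a norm limit of the compact averages $\frac1h\int_{t-h}^{t}T(r)\,dr$) and the ideal-plus-Riesz argument for part 2 are sound. The paper itself offers no proof — it quotes this result directly from Engel--Nagel (Theorem 4.29) — and your proof is essentially the standard textbook argument found there, so there is nothing to reconcile.
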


\medskip

We define $\mathcal{U}_{ad}((0,T),U) := \{a \in L^1((0,T),Z) | a(t) \in U \text{ a.e.}\}\subset A$ the space of \textit{admissible rules}. Because of \ref{hyp:L} and \ref{hyp:f}, for all $a\in \mathcal{U}_{ad}((0,T),U)$, (\ref{PDE}) has a unique solution belonging to $L^2((0,T),V)\cap H^1((0,T),V^*)$ and moreover, the solution belongs to $C([0,T],H)$ (see \cite{Papageorgiou} for the construction of such a solution). We will make an extensive use of the mild formulation of the solution of (\ref{PDE}), given by

\begin{equation}\label{flow}
\phi^{a}_t(v,d) = S(t)v + \int_0^t S(t-s) f_d(\phi^{a}_s(v,d),a(s))\mathrm{d}s,
\end{equation}
with $\phi^{a}_0(v,d)= v$. One of the keys in the construction of a controlled PDMP in finite or infinite dimension is to ensure that $\phi^{a}$ enjoys the flow property $ \phi^{a}_{t+s}(v,d) = \phi^{a}_s(\phi^{a}_t(v,d),d)$ for all $(v,d,a)\in H\times D\times \mathcal{U}_{ad}((0,T),U)$ and $(t,s)\in\mathbb{R}_+$. It is the flow property that guarantees the Markov property for the process. Under the formulation (\ref{flow}), it is easy to see that the solution $\phi^{a}$ cannot feature the flow property for any reasonable set of admissible rules. In particular, the jump process $(d_t,t\geq 0)$ given by (\ref{jumpRate}) and (\ref{jumpMeasure}) is not Markovian. Moreover in control problems, and especially in Markovian control problems, we are generally looking for feedback controls which depend only on the current state variable so that at any time, the controller needs only to observe the current state to be able to take an action. Feedback controls would ensure the flow property. However they impose a huge restriction on the class of admissible controls. Indeed, feedback controls would be functions $u: H\times D\rightarrow U$ and for the solution of (\ref{PDE}) to be uniquely determined, the function $x\rightarrow f_d(x,u(x,d))$ needs to be Lipschitz continuous. It would automatically exclude discontinuous controls and therefore would not be adapted to control problems. To avoid this issue, Vermes introduced piecewise open-loop controls (see \cite{Vermes}): after a jump of the discrete component, the controller observes the location of the jump, say $(v,d)\in H\times D$ and chooses a control rule $a \in \mathcal{U}_{ad}((0,T),U)$ to be applied until the next jump. The time elapsed since the last jump must then be added to the state variable in order to see a control rule as a feedback control. While Vermes \cite{Vermes} and Davis \cite{DavisBook} only add the last post jump location we also want to keep track of the time of the last jump in order to define proper controls for the Markov Decision Processes that we introduce in the next section, and to eventually obtain optimal feedback policies. According to these remarks, we now enlarge the state space and define control strategies for the enlarged process. We introduce first several sets that will be useful later on.

\begin{definition}\label{troisensembles}
Let us define the following sets $D(T,2) := \{(t,s)\in[0,T]^2 \mid t+s \leq T\}$, \\ $\Xi := H\times D\times D(T,2)\times H$ and $\Upsilon :=H\times D\times [0,T]$.
\end{definition}

\begin{definition}\label{ControlStrat}{Control strategies. Enlarged controlled PDMP. Survival function.}

\noindent a) The set $\mathcal{A}$ of admissible control strategies is defined by 

\[
\mathcal{A} := \{\alpha: \Upsilon \rightarrow \mathcal{U}_{ad}([0,T];U) \text{ measurable} \}.
\]

\noindent b) On $\Xi$ we define the enlarged controlled PDMP $(X^{\alpha}_t)_{t\geq 0}=(v_t,d_t,\tau_t,h_t, \nu_t)_{t\geq 0}$ with strategy $\alpha\in\mathcal{A}$ as follows:
\begin{itemize}
\item $(v_t,d_t)_{t\geq 0}$ is the original PDMP,
\item $\tau_t$ is the time elapsed since the last jump at time $t$,
\item $h_t$ is the time of the last jump before time $t$,
\item $\nu_t$ is the post jump location right after the jump at time $h_t$. 
\end{itemize}

\noindent c) Let $z:=(v,d,h)\in\Upsilon$. For $a\in\mathcal{U}_{ad}([0,T];U)$ we will denote by $\chi_.^{a}(z)$ the solution of

\begin{equation*}
\frac{\mathrm{d}}{\mathrm{d}t} \chi_t^{a}(z) = - \chi_t^{a}(z) \lambda_{d}(\phi_t^{a}(z),a(t)), \qquad \chi_0^{a}(z) = 1, 
\end{equation*}

and its immediate extension  $\chi_.^{\alpha}(z)$ to $\mathcal{A}$ such that the process $(X_t^{\alpha})_{t\geq 0}$ starting at $(v,d,0,h,v)\in \Xi,$ admits $\chi_.^{\alpha}$ as survival function:
\[
\mathbb{P}[T_1>t] = \chi_t^{\alpha}(z).
\]
The notation $\phi_t^{a}(z)$ means here 

\[
\phi_t^{a}(z) := S(t)v + \int_0^t S(t-s)f_d(\phi_s^a(z),a(s))\mathrm{d}s.
\]
and $\phi_t^{\alpha}(z)$ means

\[
\phi_t^{\alpha}(z) := S(t)v + \int_0^t S(t-s)f_d(\phi_s^a(z),\alpha(z)(s))\mathrm{d}s.
\]

\end{definition}

\medskip

\begin{remark}\label{remControl}

\noindent i)Thanks to \cite[Lemma 3]{Yushkevich}, the set of admissible control strategies can be seen as a set of measurable feedback controls acting on $\Xi$ and with values in $U$. The formulation of Definition \ref{ControlStrat} is adequate to address the associated discrete-time control problem in Section \ref{MDP}.

\noindent ii) In view of Definition \ref{ControlStrat}, given $\alpha\in \mathcal{A}$, the deterministic dynamics of the process $(X^{\alpha}_t)_{t\geq 0} = (v_t,d_t,\tau_t,,h_t,\nu_t)_{t\geq 0}$ between two consecutive jumps obeys the initial value problem
\begin{equation}\label{PDEenlarged}
\left\{
\begin{aligned}
\dot{v}_t &= -L v_t + f_d(v_t,\alpha(v,d,s)(\tau_t)), \qquad v_s = v \in E,\\
\dot{d}_t & = 0,\qquad d_s = d\in D,\\
\dot{\tau}_t &= 1, \qquad \tau_s = 0,\\
\dot{h}_t & = 0,\qquad h_s = s \in [0,T],\\
\dot{\nu}_t &= 0, \qquad \nu_s = v_s = v,
\end{aligned}
\right.
\end{equation}
with $s$ the last time of jump.
The jump rate function and transition measure of the enlarged PDMP are straightforwardly given by the ones of the original process and will be denoted the same (see Appendix \ref{iteration} for their expression).

\noindent iii) If the relation $t=h_t+\tau_t$ indicates that the variable $h_t$ might be redundant, recall that we keep track of it on purpose. Indeed, the optimal control will appear as a function of the jump times so that keeping them as a variable will make the control feedback.

%Given $(v_0,d_0,\tau_0,\eta_0)\in \Xi$ we will keep on writing $\phi_t^{\alpha}(v_0,d_0,\tau_0,\eta_0)$ for the first component solution of the semilinear parabolic PDE in $(\ref{PDEenlarged})$. Then the flow property writes
%\begin{equation}\label{FlowProp}
%\phi_{t+s}^{\alpha}(v_0,d_0,\tau_0,\eta_0) = \phi_{t}^{\alpha}(\phi_{s}^{\alpha}(v_0,d_0,\tau_0,\eta_0),d_0,\tau_0+s,\eta_0)
%\end{equation}
%for $(t,s)\in\mathbb{R}_+^2$.
\noindent iv) Because of the special definition of the enlarged process, for every control strategy in $\mathcal{A}$, the initial point of the process $(X_t^{\alpha})_{t\geq 0}$ cannot be any point of the enlarged state space $\Xi$. More precisely we introduce in Definition \ref{InitialPoint} below the space of \textit{coherent initial points}.
%\end{itemize} 
\end{remark}

\begin{definition}\label{InitialPoint}{Space of coherent initial points.}

\noindent Take $\alpha\in\mathcal{A}$ and $x:=(v_0,d_0,0,h_0,v_0)\in \Xi$  and extend the notation $\phi_t^{\alpha}(x)$ of Definition \ref{ControlStrat} to $\Xi$ by 

\[
\phi_t^{\alpha}(x) := S(t)v_0 + \int_0^t S(t-s)f_{d_0}(\phi_s^{\alpha}(x),\alpha(v_0,d_0,h_0)(\tau_s))\mathrm{d}s
\]
The set $\Xi^{\alpha}\subset\Xi$ of coherent initial points is defined as follows

\begin{equation}\label{coherent}
\Xi^{\alpha} := \{(v,d,\tau,h,\nu)\in \Xi \mid v=\phi_{\tau}^{\alpha}(\nu,d,0,h,\nu)\}.
\end{equation}
\end{definition}

\noindent Then we have for all $x:=(v_0,d_0,\tau_0,h_0,\nu_0)\in \Xi^{\alpha}$,

\[
\phi_t^{\alpha}(x) := S(t)v_0 + \int_0^t S(t-s)f_{d_0}(\phi_s^{\alpha}(x),\alpha(\nu_0,d_0,h_0)(\tau_s))\mathrm{d}s
\]

\noindent Note that $(X^{\alpha}_t)$ can be constructed like any PDMP by a classical iteration that we recall in Appendix \ref{iteration} for the sake of completeness.

\medskip

\begin{proposition}\label{flowproperty} {The flow property.}

\noindent Take $\alpha\in\mathcal{A}$ and  $x:=(v_0,d_0,\tau_0,h_0,\nu_0)\in \Xi^{\alpha}$. Then $\phi_{t+s}^{\alpha}(x) = \phi_{t}^{\alpha}(\phi_{s}^{\alpha}(x),d_s,\tau_s,h_s,\nu_s)$ for all $(t,s)\in\mathbb{R}_+^2$. 
\end{proposition}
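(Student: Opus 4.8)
The plan is to reduce the flow property to two ingredients: the semigroup property of $(S(t))_{t\ge 0}$ combined with a change of variables in the Duhamel integral \eqref{flow}, and uniqueness of the mild solution of \eqref{PDE} (which holds under \ref{hyp:L} and \ref{hyp:f}). The structural point that makes everything work is that, for a coherent initial point, the feedback control driving the deterministic flow is $\alpha(\nu_0,d_0,h_0)$, and along that flow the coordinates $d_t,h_t,\nu_t$ stay frozen while $\tau_t$ merely drifts at unit speed; hence translating the time origin along the flow leaves the control rule unchanged and only shifts its time argument.

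First I would fix $\alpha\in\mathcal A$ and $x=(v_0,d_0,\tau_0,h_0,\nu_0)\in\Xi^{\alpha}$, set $a:=\alpha(\nu_0,d_0,h_0)\in\mathcal{U}_{ad}([0,T];U)$, and let $\psi$ be the mild solution of \eqref{PDE} started at $\nu_0$ in mode $d_0$, i.e.
\[
\psi_t = S(t)\nu_0 + \int_0^t S(t-r)\, f_{d_0}(\psi_r,a(r))\,\mathrm dr,\qquad \psi_0=\nu_0 .
\]
By Definition \ref{InitialPoint}, the coherence relation \eqref{coherent} for $x$ reads exactly $v_0=\psi_{\tau_0}$, and along the flow $\phi^{\alpha}_{\cdot}(x)$ one has $d_t\equiv d_0$, $h_t\equiv h_0$, $\nu_t\equiv\nu_0$, $\tau_t=\tau_0+t$, by \eqref{PDEenlarged}.

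The key step is the identity $\phi_t^{\alpha}(x)=\psi_{\tau_0+t}$ for all $t\ge 0$. To prove it, I would write $S(\tau_0+t)=S(t)S(\tau_0)$, split the integral defining $\psi_{\tau_0+t}$ at $r=\tau_0$, change variables $r=\tau_0+r'$ in the second part, and use $v_0=\psi_{\tau_0}$, obtaining
\[
\psi_{\tau_0+t}= S(t)v_0 + \int_0^t S(t-r')\, f_{d_0}\!\big(\psi_{\tau_0+r'},\,a(\tau_0+r')\big)\,\mathrm dr'.
\]
Since $\tau_{r'}=\tau_0+r'$ along $\phi^{\alpha}_{\cdot}(x)$ and the control rule is $a=\alpha(\nu_0,d_0,h_0)$, the right-hand side is exactly the mild equation that characterizes $t\mapsto\phi_t^{\alpha}(x)$ in Definition \ref{InitialPoint}; by uniqueness of mild solutions of \eqref{PDE} under \ref{hyp:L}--\ref{hyp:f}, this forces $\phi_t^{\alpha}(x)=\psi_{\tau_0+t}$.

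To finish, note that $(\phi_s^{\alpha}(x),d_s,\tau_s,h_s,\nu_s)=(\psi_{\tau_0+s},d_0,\tau_0+s,h_0,\nu_0)$; this point lies in $\Xi^{\alpha}$, since its coherence condition is the tautology $\psi_{\tau_0+s}=\psi_{\tau_0+s}$ (using $\phi^{\alpha}_{\tau_0+s}(\nu_0,d_0,0,h_0,\nu_0)=\psi_{\tau_0+s}$), and it carries the same triple $(\nu_0,d_0,h_0)$, hence the same rule $a$. Applying the key identity once more, now to this point with $\tau_0$ replaced by $\tau_0+s$, gives $\phi_t^{\alpha}(\psi_{\tau_0+s},d_0,\tau_0+s,h_0,\nu_0)=\psi_{(\tau_0+s)+t}$, whereas the identity applied to $x$ gives $\phi_{t+s}^{\alpha}(x)=\psi_{\tau_0+t+s}$; the two coincide, and the flow property for the coordinates $d,\tau,h,\nu$ is immediate from \eqref{PDEenlarged}. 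I do not expect a genuine analytic difficulty here: the only care needed is bookkeeping — ensuring the argument $\tau$ at which $a$ is evaluated stays in $[0,T]$ (which holds on the time range relevant to the PDMP, and otherwise the rules are extended) and invoking the correct uniqueness statement for \eqref{PDE} from \cite{Papageorgiou}.
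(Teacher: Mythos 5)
Your proof is correct: the identity $\phi_t^{\alpha}(x)=\psi_{\tau_0+t}$ obtained from the semigroup property, the splitting of the Duhamel integral at $\tau_0$, the coherence relation $v_0=\psi_{\tau_0}$, and uniqueness of the mild solution of (\ref{PDE}) under \ref{hyp:L} and \ref{hyp:f} gives exactly the flow property, and you correctly check that the intermediate point $(\phi_s^{\alpha}(x),d_s,\tau_s,h_s,\nu_s)$ is again a coherent point with the same triple $(\nu_0,d_0,h_0)$, hence driven by the same rule. The paper states this proposition without proof, treating it as immediate from the definition of $\Xi^{\alpha}$; your argument is precisely the routine verification that is implicit there (including the harmless bookkeeping about evaluating the rule at times $\tau_0+s$), so it matches the paper's intended approach.
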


\medskip

\begin{notation}
\noindent  Let $\alpha\in\mathcal{A}$. For $z\in\Upsilon$, we will use the notation $\alpha_s(z):=\alpha(z)(s)$. Furthermore, we will sometimes denote by $\mathcal{Q}_{\alpha}(\cdot|v,d)$  instead of $\mathcal{Q}(\cdot|v,d,\alpha_{\tau}(\nu,d,h))$ for all $(v,d,\tau,h,\nu)\in \mathcal{A}\times \Xi^{\alpha}$. 
\end{notation}

\subsection{A probability space common to all strategies} \label{CommonSpaceSection}

Up to now thanks to Definition \ref{ControlStrat} we can formally associate the PDMP $(X^{\alpha}_t)_{t\in \mathbb{R}_+}$ to a given strategy $\alpha\in\mathcal{A}$. However, we need to show that there exists a filtered probabily space satisfying the usual conditions under which, for every control strategy $\alpha \in \mathcal{A}$, the controlled PDMP $(X_t^{\alpha})_{t\geq 0}$ is a homogeneous strong Markov process. This is what we do in the next theorem which provides an extension of \cite[Theorem 4]{Riedler} to controlled infinite-dimensional PDMPs and some estimates on the continuous component of the PDMP.

\begin{theorem}\label{THEOGENERATOR} Under assumptions \ref{hyp:lambda}, \ref{hyp:Q}, \ref{hyp:L} and \ref{hyp:f} (or \ref{hyp:fprim}) are satisfied.

\noindent a) There exists a filtered probability space satisfying the usual conditions such that for every control strategy $\alpha\in \mathcal{A}$ the process $(X_t^{\alpha})_{t\geq0}$ introduced in Definition \ref{ControlStrat} is a homogeneous strong Markov process on $\Xi$ with extended generator $\mathcal{G}^{\alpha}$ given in Appendix \ref{proofTheorem1}.

\noindent b) For every compact set $K\subset H$, there exists a deterministic constant $c_K > 0$ such that for all control strategy $\alpha\in\mathcal{A}$ and initial point $x:=(v,d,\tau,h,\nu)\in \Xi^{\alpha}$, with $v \in K$, the first component $v_t^{\alpha}$ of the control PDMP $(X_t^{\alpha})_{t\geq 0}$ starting at $x$ is such that 
\[
\sup_{t\in [0,T]} ||v_t^{\alpha}||_H \leq c_K.
\]
\end{theorem}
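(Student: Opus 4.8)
The plan is to treat the two parts separately: part a) by carrying out the classical PDMP iteration (recalled in Appendix~\ref{iteration}) on a space common to all strategies, and part b) by the mild formulation~(\ref{flow}) combined with a Gronwall estimate.

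For part a), I would first fix a strategy $\alpha\in\mathcal{A}$ and a coherent initial point, and construct $(X^\alpha_t)$ inductively. Starting from a post-jump state $(v,d,0,h,\nu)\in\Xi^\alpha$, the flow $\phi^\alpha_\cdot$ of~(\ref{PDEenlarged}) is well defined and lies in $C([0,T],H)$ by assumptions \ref{hyp:L}, \ref{hyp:f} (or \ref{hyp:fprim}) and the construction of~\cite{Papageorgiou}; hence $t\mapsto\lambda_d(\phi^\alpha_t(z),\alpha_{\tau_t}(z))$ is measurable and, by \ref{hyp:lambda}1., bounded between $\delta$ and $M_\lambda$, so the ODE of Definition~\ref{ControlStrat}~c) has a unique solution $\chi^\alpha_\cdot(z)$ with $e^{-M_\lambda T}\le\chi^\alpha_T(z)\le\chi^\alpha_t(z)\le 1$; this is a legitimate survival function for the next inter-jump time $T_1$, and the post-jump discrete state is drawn from $\mathcal{Q}_\alpha(\cdot\mid\phi^\alpha_{T_1}(z),d)$. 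Iterating this yields the sequence $(T_n,\nu_n,d_n)$ and hence the trajectory. Regularity (a.s. finitely many jumps on $[0,T]$) follows from $\lambda\le M_\lambda$ in \ref{hyp:lambda}1., which stochastically dominates the jump counting process by a Poisson process of intensity $M_\lambda$. To obtain one probability space valid for all $\alpha$, I would perform this construction on a canonical space carrying an i.i.d. sequence of $[0,1]$-uniform variables (equivalently, a Poisson random measure), so that each $t\mapsto X^\alpha_t$ is a measurable functional of these variables; completing the natural filtration gives the usual conditions. The homogeneous Markov property is then a consequence of the flow property of Proposition~\ref{flowproperty}: the enlargement by $(\tau_t,h_t,\nu_t)$ turns the deterministic evolution between jumps into a genuine time-homogeneous semiflow on $\Xi^\alpha$, while $\lambda$ and $\mathcal{Q}$ depend only on the current enlarged state; the strong Markov property follows exactly as in the finite-dimensional case (\cite{Davis84},\cite{DavisBook}), since every stopping time lies between two jump times and the dynamics is deterministic in between. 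The precise form of the extended generator $\mathcal{G}^\alpha$ and of its domain comes from Dynkin's formula along the constructed trajectory, and I would defer this computation — the most technical point in infinite dimension, since the transport term involves differentiation along the flow — to Appendix~\ref{proofTheorem1}.

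For part b), the key observation is that although the discrete component jumps, the continuous component stays continuous and, by the semigroup identity $S(t-T_n)S(T_n)=S(t)$, satisfies the \emph{global} mild equation
\[
v^\alpha_t = S(t)v + \int_0^t S(t-s)\,f_{d_s}\!\big(v^\alpha_s,\alpha_{\tau_s}(\nu_s,d_s,h_s)\big)\,\mathrm{d}s,\qquad t\in[0,T],
\]
with $d_s$ piecewise constant. Using $\|S(t)\|\le M_S$ on $[0,T]$ and the linear growth $\|f_d(x,z)\|_H\le b_1+b_2\|x\|_H$ (valid under \ref{hyp:f}, as noted after \ref{hyp:fprim}, and under \ref{hyp:fprim}2. directly), one gets
\[
\|v^\alpha_t\|_H \le M_S\|v\|_H + M_S b_1 T + M_S b_2\int_0^t\|v^\alpha_s\|_H\,\mathrm{d}s,
\]
and Gronwall's lemma yields $\sup_{t\in[0,T]}\|v^\alpha_t\|_H\le (M_S\|v\|_H+M_S b_1 T)\,e^{M_S b_2 T}$. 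Since a compact $K\subset H$ is bounded, say by $R_K$, the constant $c_K:=(M_S R_K+M_S b_1 T)e^{M_S b_2 T}$ works: it is deterministic, independent of $\alpha$, and — crucially — independent of the random number of jumps, because the estimate is made once over the whole interval rather than inter-jump interval by interval.

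The main obstacle is the measurability bookkeeping in part a): one must verify that, jointly in the initial point, the strategy and time, the flow $\phi^\alpha$, the survival function $\chi^\alpha$, and hence the whole trajectory, are measurable, so that the iteration produces a bona fide process on the common space. The infinite-dimensional setting makes this more delicate than in \cite{Davis84}, and it is precisely here that the continuity of mild solutions established in \cite{Papageorgiou} and the Borel structure of $\mathcal{A}$ (via \cite{Yushkevich}) are used. Once these points are secured, regularity, the Markov and strong Markov properties, and the generator computation follow the finite-dimensional template.
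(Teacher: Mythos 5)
Your proposal is correct and follows essentially the same route as the paper: construction of all the $(X^\alpha_t)$ on one canonical space driven by i.i.d.\ uniform variables (the Hilbert cube, as in Davis), regularity from the bound $\lambda\le M_\lambda$, the homogeneous and strong Markov properties obtained from the flow property together with Davis's characterization of jump-process stopping times, the generator deferred to the appendix, and part b) via the global mild formulation and Gronwall, yielding exactly the paper's constant $M_S(\|v\|_H+b_1T)e^{M_Sb_2T}$. The only point you pass over quickly is the right-continuity of the completed filtration (completion alone does not give the usual conditions), which the paper secures through the one-to-one correspondence with the embedded pure jump process on $\Upsilon$, whose natural filtration is right-continuous as a right-constant process.
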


The proof of Theorem \ref{THEOGENERATOR} is given in Appendix \ref{proofTheorem1}. In the next section, we introduce the MDP that will allow us to prove the existence of optimal strategies.

\subsection{A Markov Decision Process (MDP)}\label{MDP}

Because of the particular definition of the state space $\Xi$, the state of the PDMP just after a jump is in fact fully determined by a point in $\Upsilon$. In Appendix \ref{proofTheorem1} we recall the one-to-one correspondence between the PDMP on $\Xi$ and the included pure jump process $(Z_n)_{n\in\mathbb{N}}$ with values in $\Upsilon$. This pure jump process allows to define a Markov Decision Process $(Z'_n)_{n\in\mathbb{N}}$ with values in $\Upsilon\cup\{\Delta_{\infty}\}$, where $\Delta_{\infty}$ is a cemetery state added to $\Upsilon$ to define a proper MDP. In order to lighten the notations, the dependence on a control strategy $\alpha\in\mathcal{A}$ of both jump processes is implicit. The stochastic kernel $\mathcal{Q}'$ of the MDP satisfies 
\begin{equation}
\mathcal{Q}'(B\times C\times E | z , a) =  \int_0^{T-h} \rho_t\mathrm{d}t,
\end{equation}

for any $z:=(v,d,h)\in \Upsilon$, Borel sets $B\subset H$, $C\subset D$, $E\subset [0,T]$, and $a \in \mathcal{U}_{ad}([0,T],U)$, where 
\begin{equation*}
\rho_t:=\lambda_{d}(\phi_{t}^{a}(z),a(t)) \chi_t^{a}(z)\mathbf{1}_E(h+t)\mathbf{1}_B(\phi_{t}^{a}(z))\mathcal{Q}(C|\phi_{t}^{a}(z),d,a(t)),
 \end{equation*}
 with $\phi_{t}^{a}(z)$ given by (\ref{flow}) and $\mathcal{Q}'(\{\Delta_{\infty}\}|z ,a) = \chi_{T-h}^{a}(z)$, and $\mathcal{Q}'(\{\Delta_{\infty}\}|\Delta_{\infty},a) = 1$. The conditional jumps of the MDP $(Z'_n)_{n\in\mathbb{N}}$ are then given by the kernel $\mathcal{Q}'(\cdot | z, \alpha(z))$ for $(z,\alpha)\in\Upsilon\times\mathcal{A}$.
Note that $Z'_n= Z_n$ as long as $T_n \leq T$, where $T_n$ is the last component of $Z_n$. Since we work with Borel state and control spaces, we will be able to apply techniques of \cite{BertsekasShreve} for discrete-time stochastic control problems, without being concerned by measurability matters. See \cite[Section 1.2]{BertsekasShreve} for an illuminating discussion on these measurability questions.

\section{Relaxed controls}\label{RelaxControlSection}

Relaxed controls are constructed by enlarging the set of ordinary ones, in order to convexify the original system, and in such a way that it is possible to approximate relaxed strategies by ordinary ones. The difficulty in doing so is twofold. First, the set of relaxed trajectories should not be much larger than the original one. Second, the topology considered on the set of relaxed controls should make it a compact set and, at the same time, make the flow of the associated PDE continuous. Compactness and continuity are two notions in conflict so being able to achieve such a construction is crucial. Intuitively a relaxed control strategy on the action space $U$ corresponds to randomizing the control action: at time $t$, instead of taking a predetermined action, the controller will take an action with some probability, making the control a transition probability. This has to be formalized mathematically.
%Via Dirac masses, we can see the ordinary controls as particular relaxed ones.
%In finite dimension, relaxed controls have been introduced by Warga (\cite{Warga62A}, \cite{Warga62B}). They are a generalization of Young %measures (\cite{YoungBook}) from the calculus of variations to control problems. 
%We recall some finite dimensional results in Appendix \ref{RelaxFinite}. 

\bigskip

\noindent {\bf Notation and reminder.} $Z$ is a compact Polish space, $C(Z)$ denotes the set of all real-valued continuous, necessarily bounded, functions on $Z$, endowed with the supremum norm. Because $Z$ is compact, by the Riesz Representation Theorem, the dual space $[C(Z)]^*$ of $C(Z)$ is identified with the space $M(Z)$ of Radon measures on $\mathcal{B}(Z)$, the Borel $\sigma$-field of $Z$. We will denote by $M_+^1(Z)$ the space of probability measures on $Z$. The action space $U$ is a closed subset of $Z$. We will use the notations $L^1(C(Z)):=L^1((0,T),C(Z))$ and $L^{\infty}(M(Z)):=L^{\infty}((0,T),M(Z))$.

\subsection{Relaxed controls for a PDE}\label{relaxedcontrolsInfDim} %{From ordinary to relaxed controls}

Let $\mathcal{B}([0,T])$ denote the Borel $\sigma$-field of $[0,T]$ and $Leb$ the Lebesgue measure.
A transition probability from $([0,T],\mathcal{B}([0,T]), Leb)$ into $(Z,\mathcal{B}(Z))$ is a function $\gamma : [0,T]\times \mathcal{B}(Z) \rightarrow [0,1] $ such that
\begin{equation*}
\left\{
\begin{aligned}
&t\rightarrow \gamma(t,C) \text{ is measurable for all }C\in \mathcal{B}(Z),\\
& \gamma(t,\cdot) \in M_+^1(Z) \text{ for all } t\in [0,T]. 
\end{aligned}
\right.
\end{equation*}

\noindent We will denote by $\mathcal{R}([0,T],Z)$ the set of all transition probability measures from \\ $([0,T],\mathcal{B}([0,T]), Leb)$ into $(Z,\mathcal{B}(Z))$.

\noindent Recall that we consider the PDE (\ref{PDE}): 

\begin{equation}\label{PDEadmissible}
\dot{v}_t  = L v_t + f_d(v_t,a(t)), \quad v_0 = v, \quad v \in V, \quad a\in \mathcal{U}_{ad}([0,T],U).
\end{equation}

\noindent The relaxed PDE is then of the form

\begin{equation}\label{relaxedPDE}
\dot{v}_t  = L v_t + \int_Z f_d(v_t,u)\gamma(t)(\mathrm{d}u),\quad v_0 = v, \quad v \in V, \quad \gamma\in \mathcal{R}([0,T],U),
\end{equation}

\noindent where $\mathcal{R}([0,T],U) := \{\gamma\in \mathcal{R}([0,T],Z) | \gamma(t)(U) = 1 \text{ a.e. in }[0,T]  \}$ is the set of transition probabilities from $([0,T],\mathcal{B}([0,T]), Leb)$ into $(Z,\mathcal{B}(Z))$ with support in $U$. The integral part of (\ref{relaxedPDE}) is to be understood in the sense of Bochner-Lebesgue as we show now. The topology we consider on $\mathcal{R}([0,T],U)$ follows from \cite{Balder} and because $Z$ is a compact metric space, it coincides with the usual topology of relaxed control theory of \cite{WargaBook}. It is the coarsest topology that makes continuous all mappings

\begin{equation*}
\gamma \rightarrow \int_0^T \int_Z f(t,z)\gamma(t)(\mathrm{d}z)\mathrm{d}t \in \R,	
\end{equation*}

\noindent for every Carathéodory integrand $f:[0,T]\times Z\rightarrow \R$, a Carathéodory integrand being such that 

\begin{equation*}
\left\{
\begin{aligned}
&t\rightarrow f(t,z) \text{ is measurable for all }z\in Z,\\
& z\rightarrow f(t,z) \text{ is continuous a.e., }\\
& |f(t,z)| \leq b(t) \text{ a.e., with }b\in L^1((0,T),\R). 
\end{aligned}
\right.
\end{equation*}

This topology is called the weak topology on $\mathcal{R}([0,T],Z)$ but we show now that it is in fact metrizable. 
Indeed, Carathéodory integrands $f$ on $[0,T]\times Z$ can be identified with the Lebesgue-Bochner space $L^1(C(Z))$ via the application $t\rightarrow f(t,\cdot) \in L^1(C(Z))$. Now, since $M(Z)$ is a separable ($Z$ is compact), dual space (dual of $C(Z)$), it enjoys the Radon-Nikodym property. Using \cite[Theorem 1 p. 98]{Diestel}, it follows that $[L^1(C(Z))]^* = L^{\infty}( M(Z))$. Hence, the weak topology on $\mathcal{R}([0,T],Z)$ can be identified with the $w^*$-topology in $(L^{\infty}(M(Z)),L^1(C(Z)))$, the latter being metrizable since $L^1(C(Z))$ is a separable space (see \cite[Theorem 1 p. 426]{DunfordSchwartz}). This crucial property allows to work with sequences when dealing with continuity matters with regards to relaxed controls. 

Finally, by Alaoglu's Theorem, $\mathcal{R}([0,T],U)$ is $w^*$-compact in $L^{\infty}(M(Z))$, and the set of original admissible controls $\mathcal{U}_{ad}([0,T],U)$ is dense in $\mathcal{R}([0,T],U)$ (see \cite[Corollary 3 p. 469]{Balder}).

For the same reasons why (\ref{PDEadmissible}) admits a unique solution, by setting $\bar{f}_d(v,\gamma):=\int_Zf_d(v,u)\gamma(\mathrm{d}u)$, it is straightforward to see that $(\ref{relaxedPDE})$ admits a unique solution.
The following theorem gathers of \cite[Theorems 3.2 and 4.1]{Papageorgiou} and will be of paramount importance in the sequel. 

\begin{theorem}\label{theoPapageorgiou}
If assumptions \ref{hyp:L} and \ref{hyp:f} (or \ref{hyp:fprim}) hold, then 

\noindent a) the space of relaxed trajectories (i.e. solutions of \ref{relaxedPDE}) is a convex, compact set of $C([0,T],H)$. It is the closure in $C([0,T],H)$ of the space of original trajectories (i.e. solutions of \ref{PDEadmissible}).

\noindent b) The mapping that maps a relaxed control to the solution of (\ref{relaxedPDE}) is continuous from $\mathcal{R}([0,T],U)$ into $C([0,T],H)$.
\end{theorem}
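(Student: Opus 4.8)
The statement is explicitly attributed to \cite[Theorems 3.2 and 4.1]{Papageorgiou}, so the plan is essentially to transcribe and adapt Papageorgiou's argument to the present setting, checking that our standing hypotheses \ref{hyp:L} and \ref{hyp:f} (or \ref{hyp:fprim}) supply exactly what is needed. The whole argument rests on the mild formulation
\[
v_t = S(t)v + \int_0^t S(t-s)\,\bar f_d(v_s,\gamma(s))\,\mathrm{d}s,
\qquad \bar f_d(v,\gamma):=\int_Z f_d(v,u)\,\gamma(\mathrm{d}u),
\]
the $w^*$-compactness and metrizability of $\mathcal{R}([0,T],U)$ in $L^\infty(M(Z))$ established just above, and the immediate compactness of the semigroup from \ref{hyp:L}4. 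First I would record the a priori bound: using \ref{hyp:fprim}2 (which \ref{hyp:f} implies, with constants $b_1,b_2$ uniform in $D$) together with $\|S(t)\|\le M_S$ on $[0,T]$ and Gronwall's lemma applied to the mild formulation, one gets a bound on $\sup_{t\in[0,T]}\|v_t\|_H$ depending only on $\|v\|_H$, $b_1$, $b_2$, $M_S$, $T$ — crucially \emph{uniform over all relaxed controls} $\gamma\in\mathcal{R}([0,T],U)$, since $\|\bar f_d(v,\gamma(s))\|_H\le b_1+b_2\|v\|_H$ regardless of $\gamma(s)$. The variational bound in $L^2((0,T),V)\cap H^1((0,T),V^*)$ follows from \ref{hyp:L}2,3 by the standard energy estimate, again uniformly in $\gamma$.

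For part b), the continuity of $\gamma\mapsto v^\gamma$ from $\mathcal{R}([0,T],U)$ (with the metrizable $w^*$-topology) into $C([0,T],H)$, I would argue sequentially: take $\gamma_n\to\gamma$ and set $v_n:=v^{\gamma_n}$. By the uniform bounds, $(v_n)$ is bounded in $L^2((0,T),V)\cap H^1((0,T),V^*)$, hence along a subsequence $v_n\rightharpoonup w$ weakly there and, by the Aubin–Lions compact embedding, $v_n\to w$ strongly in $L^2((0,T),H)$; the compactness of $S(t)$ for $t>0$ (\ref{hyp:L}4) upgrades this to pointwise-in-$t$ convergence in $H$ and then, via an equicontinuity argument on the mild formula, to convergence in $C([0,T],H)$. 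The delicate point is passing to the limit in the nonlinear term $\int_0^t S(t-s)\bar f_d(v_{n,s},\gamma_n(s))\,\mathrm{d}s$: one splits it as $\int_0^t S(t-s)\bigl(\bar f_d(v_{n,s},\gamma_n(s))-\bar f_d(w_s,\gamma_n(s))\bigr)\mathrm{d}s + \int_0^t S(t-s)\bar f_d(w_s,\gamma_n(s))\,\mathrm{d}s$; the first piece vanishes by the Lipschitz bound \ref{hyp:f}1 (or monotonicity in \ref{hyp:fprim}) and strong $L^2((0,T),H)$ convergence of $v_n$, while the second converges because $(s,u)\mapsto f_d(w_s,u)$ is a Carathéodory integrand valued in $H$ — here one uses \ref{hyp:f}2, the weak continuity of $f_d$ in its $H$-argument, to test against elements of $H$ and invoke the very definition of the weak topology on $\mathcal{R}([0,T],U)$ coordinate by coordinate in a countable dense subset of $H$. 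By uniqueness of the mild solution, $w=v^\gamma$, so the whole sequence converges and continuity follows.

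For part a), compactness of the set of relaxed trajectories in $C([0,T],H)$ is now immediate: it is the continuous image under $\gamma\mapsto v^\gamma$ of the $w^*$-compact set $\mathcal{R}([0,T],U)$. Convexity follows because the relaxed trajectories coincide with the closure (in $C([0,T],H)$) of the ordinary ones — indeed $\mathcal{U}_{ad}([0,T],U)$ is $w^*$-dense in $\mathcal{R}([0,T],U)$ (Balder's corollary cited above), so continuity of the solution map gives that original trajectories are dense in relaxed ones, and the relaxed-trajectory set, being compact, is closed; convexity then comes from the fact that the closed convex hull of the original trajectories is contained in the relaxed-trajectory set (any convex combination of relaxed controls is again a relaxed control, and by linearity of $\bar f_d$ in $\gamma$ one checks the corresponding trajectory lies in the set) — this is the classical relaxation/convexification statement, and I would simply cite \cite[Theorems 3.2 and 4.1]{Papageorgiou} for the precise chain of inclusions rather than reproving it.

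The main obstacle is the limit passage in the nonlinear term in part b): one must exploit two different structural facts at once — strong compactness of the trajectories (from the parabolic smoothing of \ref{hyp:L}4 via Aubin–Lions) to handle the dependence of $f_d$ on $v_n$, and the Carathéodory/weak-topology machinery on $\mathcal{R}([0,T],U)$ (from the metrizability discussion and \ref{hyp:f}2) to handle the dependence on $\gamma_n$. Keeping all the estimates uniform in $\gamma$ is what makes the compactness in part a) fall out for free. Since this is exactly the content of \cite{Papageorgiou}, the honest write-up is to state the adaptation and refer to that paper for the details.
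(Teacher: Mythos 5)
The paper offers no proof of this theorem at all: it is imported verbatim as \cite[Theorems 3.2 and 4.1]{Papageorgiou}, which is exactly what your plan does, and your sketch of the uniform-in-$\gamma$ a priori bounds, the Gronwall/compact-semigroup/weak-$*$ limit passage in the nonlinear term, and the density-plus-compactness argument for a) is consistent with that source (and with the machinery the paper itself redeploys in Lemma \ref{ContinuityTheo}, where the same continuity is reproved jointly in $(z,\gamma)$ by a Gronwall estimate rather than your Aubin--Lions extraction). One caveat: your parenthetical justification of convexity in a) would not stand on its own, since a convex combination of two relaxed trajectories is not the trajectory of the averaged control (the solution map is nonlinear in the state, even though $\bar f_d$ is affine in $\gamma$), but as you, like the paper, explicitly defer to \cite{Papageorgiou} for precisely that point, this does not affect the soundness of the write-up.
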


\subsection{Relaxed controls for infinite-dimensional PDMPs}\label{relaxedPDMP}

First of all, note that since the control acts on all three characteristics of the PDMP, convexity assumptions on the fields $f_d(v,U)$ would not necessarily ensure existence of optimal controls as it does for partial differential equations. Such assumptions should also be imposed on the rate function and the transition measure of the PDMP. For this reason, relaxed controls are even more important to prove existence of optimal controls for PDMP. For what has been done for PDE above, we are now able to define relaxed PDMPs. The next definition is the relaxed analogue of Definition \ref{ControlStrat}.

\begin{definition}\label{RelaxedControlStrat}{Relaxed control strategies, relaxed local characteristics.}%{Relaxed control strategy for the PDMP.}

\noindent a) The set $\mathcal{A}^{\mathcal{R}}$ of relaxed admissible control strategies for the PDMP is defined by 

\[
\mathcal{A}^{\mathcal{R}} := \{\mu: \Upsilon \rightarrow \mathcal{R}([0,T];U) \text{ measurable} \}.
\]

\noindent Given a relaxed control strategy $\mu\in\mathcal{A}^{\mathcal{R}}$ and $z\in \Upsilon$, we will denote by $\mu^z:=\mu(z)\in  \mathcal{R}([0,T];U)$ and $\mu^z_t$ the corresponding probability measure on $(Z,\mathcal{B}(Z))$.

\noindent b) For $\gamma\in M_+^1(Z)$, $(v,d)\in H\times D$ and $C\in\mathcal{B}(D)$, we extend the jump rate function and transition measure as follows

\begin{equation}\label{relaxedCharact}
\left\{
\begin{aligned}
\lambda_d(v,\gamma) &:= \int_Z \lambda_d(v,u)\gamma(\mathrm{d}u), \\
\mathcal{Q}(C|v,d,\gamma) &:= \left(\lambda_d(v,\gamma)\right)^{-1} \int_Z \lambda_d(v,u)\mathcal{Q}(C|v,d,u)\gamma(\mathrm{d}u),
\end{aligned}
\right.
\end{equation}

\noindent the expression for the enlarged process being straightforward. This allows us to give the relaxed survival function of the PDMP and the relaxed mild formulation of the solution of (\ref{relaxedPDE})

\begin{equation}\label{relaxedSurvival}
\left\{
\begin{aligned}
\frac{\mathrm{d}}{\mathrm{d}t} \chi_t^{\mu}(z) &= - \chi_t^{\mu}(z)\lambda_d(\phi_t^{\mu}(z),\mu_t^z), \qquad \chi_0^{\mu}(z) = 1, \\
\phi_t^{\mu}(z) &=S(t)v + \int_0^t\int_Z  S(t-s)f_{d}(\phi_s^{\mu}(z),u)\mu_s^z(\mathrm{d}u)\mathrm{d}s, 
\end{aligned}
\right.
\end{equation}

\end{definition}

\noindent for $\mu\in\mathcal{A}^{\mathcal{R}}$ and $z:=(v,d,h)\in\Upsilon$. For $\gamma\in\mathcal{R}([0,T],U)$, we will also use the following notation 

\begin{equation*}
\left\{
\begin{aligned}
\chi_t^{\gamma}(z) &= \exp\left(-\int_0^t \lambda_d(\phi_s^{\gamma}(z),\gamma(t))\right), \\
\phi_t^{\gamma}(z) &=S(t)v + \int_0^t\int_Z  S(t-s)f_{d}(\phi_s^{\gamma}(z),u)\gamma(s)(\mathrm{d}u)\mathrm{d}s, 
\end{aligned}
\right.
\end{equation*}

\noindent The following proposition is a direct consequence of Theorem \ref{THEOGENERATOR}b).

\begin{proposition}	
For every compact set $K\subset H$, there exists a deterministic constant $c_K > 0$ such that for all control strategy $\mu\in\mathcal{A}^{\mathcal{R}}$ and initial point $x:=(v,d,\tau,h,\nu)\in \Xi^{\alpha}$, with $v \in K$, the first component $v_t^{\mu}$ of the control PDMP $(X_t^{\mu})_{t\geq 0}$ starting at $x$ is such that 
\[
\sup_{t\in [0,T]} ||v_t^{\mu}||_H \leq c_K.
\]
\end{proposition}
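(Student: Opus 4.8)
The plan is to deduce this proposition directly from Theorem~\ref{THEOGENERATOR}b) by exploiting the density of ordinary controls in the set of relaxed controls. The key observation is that the constant $c_K$ in Theorem~\ref{THEOGENERATOR}b) depends only on the compact set $K$ and not on the chosen control strategy $\alpha\in\mathcal{A}$; hence it suffices to transfer the uniform bound from ordinary trajectories to relaxed trajectories by a limiting argument, using the continuity result of Theorem~\ref{theoPapageorgiou}b).

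More precisely, I would proceed as follows. Fix a compact set $K\subset H$, a relaxed strategy $\mu\in\mathcal{A}^{\mathcal{R}}$, and a coherent initial point $x:=(v,d,\tau,h,\nu)\in\Xi^{\alpha}$ with $v\in K$. Between consecutive jumps, the continuous component $v^{\mu}_t$ of the PDMP follows the relaxed flow $\phi^{\mu}_{\cdot}(z)$ given in (\ref{relaxedSurvival}), which on each inter-jump interval is the solution of the relaxed PDE (\ref{relaxedPDE}) associated with the relaxed control $\gamma:=\mu^{z}\in\mathcal{R}([0,T],U)$ (for the appropriate post-jump data $z\in\Upsilon$). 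By the last paragraph of Section~\ref{relaxedcontrolsInfDim}, $\mathcal{U}_{ad}([0,T],U)$ is dense in $\mathcal{R}([0,T],U)$, so there is a sequence $(a^n)_{n\in\mathbb{N}}\subset\mathcal{U}_{ad}([0,T],U)$ with $a^n\to\gamma$ in the $w^*$-topology. Theorem~\ref{theoPapageorgiou}b) then gives $\phi^{a^n}_{\cdot}(z)\to\phi^{\gamma}_{\cdot}(z)$ in $C([0,T],H)$, hence in particular $\sup_{t\in[0,T]}\|\phi^{a^n}_t(z)-\phi^{\gamma}_t(z)\|_H\to 0$. Since each $\phi^{a^n}_{\cdot}(z)$ is (a piece of) the continuous component of an ordinary controlled PDMP started from an initial point whose first coordinate lies in $K$, Theorem~\ref{THEOGENERATOR}b) yields $\sup_{t\in[0,T]}\|\phi^{a^n}_t(z)\|_H\le c_K$ for every $n$, and passing to the limit gives $\sup_{t\in[0,T]}\|\phi^{\gamma}_t(z)\|_H\le c_K$.

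It then remains to propagate this bound through the successive jumps of the relaxed PDMP. One argues by induction on the jump index: the claim is that every post-jump value $\nu_{T_n}=v^{\mu}_{T_n}$ stays in a fixed compact set, so that the constant $c_K$ can be applied uniformly on each inter-jump interval. The natural candidate compact set is the closed ball $\bar{B}_H(0,c_K)$ (intersected with $K$ if one wishes to keep $K$ fixed; in fact replacing $K$ by $K\cup\bar{B}_H(0,c_K)$ at the outset makes the set stable under the dynamics). Since the relaxed transition measure $\mathcal{Q}(\cdot\,|v,d,\gamma)$ of (\ref{relaxedCharact}) acts only on the discrete component $d$ and leaves the continuous component $v$ unchanged at a jump, the post-jump continuous value equals the pre-jump limit value, which by the estimate above lies in $\bar{B}_H(0,c_K)$; the induction closes. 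Concatenating the bounds over the (almost surely finitely many, by regularity of the process under \ref{hyp:lambda}1.) inter-jump intervals gives $\sup_{t\in[0,T]}\|v^{\mu}_t\|_H\le c_K$, which is the assertion.

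The only mild subtlety — and the step I would be most careful about — is making sure the constant $c_K$ is genuinely uniform over all intervals and all jump configurations, i.e. that the induction set is stable: this is why one should from the start enlarge $K$ to $\tilde K:=K\cup\bar{B}_H(0,c_K)$ and note that $c_{\tilde K}$ then serves on every inter-jump piece (one may even take the enlarged $K$ from the outset since $c_{\tilde K}\ge c_K$ bounds the first interval too). Everything else is a routine combination of the density statement, the continuity statement of Theorem~\ref{theoPapageorgiou}b), and the strategy-independence of the bound in Theorem~\ref{THEOGENERATOR}b).
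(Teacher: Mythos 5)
Your first step (fixing one inter-jump interval, approximating the relaxed rule $\gamma=\mu^z$ by ordinary rules $a^n\in\mathcal{U}_{ad}([0,T],U)$, invoking Theorem \ref{theoPapageorgiou}b) for convergence in $C([0,T],H)$ and passing the bound $c_K$ to the limit) is sound, but the induction over jumps contains a genuine gap: to handle the second and subsequent intervals you apply Theorem \ref{THEOGENERATOR}b) with the enlarged set $\tilde K:=K\cup\bar{B}_H(0,c_K)$, and this set is \emph{not} compact, because closed balls of $H$ are not compact ($H$ is infinite-dimensional here; indeed assumption \ref{hyp:L}4.\ together with Theorem \ref{ImmediateCompactness} rules out the finite-dimensional case). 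Theorem \ref{THEOGENERATOR}b), used as a black box, only provides a constant for \emph{compact} sets of initial data, so your induction set is not admissible. Nor can you repair this by replacing the ball with the set of reachable post-jump values: that set is the image of a compact set only through the flow \emph{and} the strategy $\mu$, and $\mu$ is merely measurable (not continuous) in the post-jump location, so there is no reason for the reachable set after the first jump to be contained in a compact subset of $H$. As written, the argument therefore does not close.

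The paper's proof avoids this entirely, which is why the proposition is stated as a direct consequence of Theorem \ref{THEOGENERATOR}b): the bound there is obtained in Appendix \ref{proofTheorem1} by the Gronwall estimate (\ref{phiBound}), which uses only the linear growth $||f_d(x,z)||_H\le b_1+b_2||x||_H$ (assumption \ref{hyp:f}, or \ref{hyp:fprim}2.) and is applied to the mild formulation of the \emph{whole} trajectory $t\mapsto v_t$ on $[0,T]$ (legitimate because the continuous component does not jump, only $d_s$ changes piecewise). The resulting constant $M_S\big(||v||_H+b_1T\big)e^{M_Sb_2T}$ depends only on $||v||_H$, not on the control and not on the jump configuration, so compactness of $K$ is never actually used --- boundedness suffices. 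Since the relaxed drift $\bar f_d(v,\gamma)=\int_Z f_d(v,u)\gamma(\mathrm{d}u)$ obeys the same bound $||\bar f_d(v,\gamma)||_H\le b_1+b_2||v||_H$, the identical computation applies verbatim to $v^{\mu}$, yielding $c_K=M_S\big(\sup_{v\in K}||v||_H+b_1T\big)e^{M_Sb_2T}$ uniformly in $\mu$ and across all jumps. If you want to keep your density-plus-continuity argument, you must either restate Theorem \ref{THEOGENERATOR}b) for bounded sets (which its proof supports) or simply rerun the Gronwall estimate for the relaxed flow; as it stands, the passage from one inter-jump interval to the next is where your proof fails.
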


\noindent The relaxed transition measure is given in the next section through the relaxed stochastic kernel of the MDP associated to our relaxed PDMP.

\subsection{Relaxed associated MDP}

Let $z:=(v,d,h)\in\Upsilon$ and $\gamma\in\mathcal{R}([0,T],U)$. The relaxed stochastic kernel of the relaxed MDP satisfies

\begin{equation}
\mathcal{Q}'(B\times C\times E|z,\gamma)=\int_0^{T-h} \tilde{\rho}_t\mathrm{d}t,
\end{equation}

\noindent for Borel sets $B\subset H$, $C\subset D$, $E\subset [0,T]$, where 
\begin{align*}
\tilde{\rho}_t & :=\chi_t^{\gamma}(z) \mathbf{1}_E(h+t)\mathbf{1}_B(\phi_{t}^{\gamma}(z))\,\int_Z \lambda_{d}\Big(\phi_{t}^{\mu}(z),u\Big) \mathcal{Q}\Big(C|\phi_{t}^{\mu}(z),d,u\Big)\gamma(t)(\mathrm{d}u),\\
& = \chi_t^{\gamma}(z) \mathbf{1}_E(h+t)\mathbf{1}_B(\phi_{t}^{\gamma}(z))\lambda_{d}\Big(\phi_{t}^{\gamma}(z),\gamma(t)\Big)\mathcal{Q}\Big(C|\phi_{t}^{\gamma}(z),d,\gamma(t)\Big)
 \end{align*}

\noindent and $\mathcal{Q}'(\{\Delta_{\infty}\}|z ,\gamma) = \chi_{T-h}^{\gamma}(z)$, and $\mathcal{Q}'(\{\Delta_{\infty}\}|\Delta_{\infty},\gamma) = 1$, with, as before, the conditional jumps of the MDP $(Z'_n)_{n\in\mathbb{N}}$ given by the kernel $\mathcal{Q}'(\cdot | z, \mu(z))$ for $(z,\mu)\in\Upsilon\times\mathcal{A}^{\mathcal{R}}$.

\section{Main results}\label{ResultSection}

Here, we are interested in finding optimal controls for optimization problems involving infinite-dimensional PDMPs. For instance, we may want to track a targeted "signal" (as a solution of a given PDE, see Section \ref{OptoSection}). To do so, we are going to study the optimal control problem of the imbedded MDP defined in Section \ref{MDP}. This strategy has been for example used in \cite{Rieder} in the particular setting of a decoupled finite-dimensional PDMP, the rate function being constant.

\subsection{The optimal control problem}\label{costs}

Thanks to the preceding sections we can consider ordinary or relaxed costs for the PDMP $X^\alpha$  or the MDP and their corresponding value functions. For $z:=(v,d,h)\in\Upsilon$ and $\alpha\in\mathcal{A}$ we denote by $ \mathbb{E}_{z}^{\alpha}$ the conditional expectation given that $X_{h}^{\alpha}=(v,d,0,h,v)$ and by $X_s^{\alpha}(\phi)$ the first component of $X_s^{\alpha}$. Furthermore, we denote by $X_s^{\alpha}:=(v_s,d_s,\tau_s,h_s,\nu_s)$, then the shortened notation $\alpha(X_s^{\alpha})$ will refer to $\alpha_{\tau_s}(\nu_s,d_s,h_s)$. Theses notations are straightforwardly extended to $\mathcal{A}^{\mathcal{R}}$. We introduce a running cost $c : H\times Z\rightarrow \mathbb{R}_+$ and a terminal cost $g: H \rightarrow \mathbb{R}_+$ satisfying 

\begin{description}
\item[\namedlabel{hyp:cost}{(H($c$))}]
$(v,z)\rightarrow c(v,z)$ and $v\rightarrow g(v)$ are nonnegative norm quadratic functions, that is there exists $(a,b,c,d,e,f,g,h,i,j)\in \R^9$ such that for $v,z\in H\times Z$,
\begin{align*}
c(v,u) &= a||v||^2_H + b\bar{d}(0,u)^2 + c||v||_H\bar{d}(0,u) + d ||v||_H + e\bar{d}(0,u) + f,\\
 g(v) &= h||v||^2_H + i ||v||_H + j,
\end{align*}
with $\bar{d}(\cdot,\cdot)$ the distance on $Z$.
\end{description}

\begin{remark}
This assumption might seem a bit restrictive, but it falls within the framework of all the applications we have in mind. More importantly, it can be widely loosened if we slightly change the assumptions of Theorem \ref{TheoremExistenceOptimalControl}. In particular, all the following results, up to Lemma \ref{lemBoundingFunctionPDMP}, are true and proved for continuous functions $c : H\times Z\rightarrow \mathbb{R}_+$ and $g: H \rightarrow \mathbb{R}_+$. See Remark \ref{remarkCost} below.
\end{remark}

\begin{definition}{Ordinary value function for the PDMP $\, X^\alpha$.}

\noindent For $\alpha\in\mathcal{A} \, , $ we define the ordinary expected total cost function $V_{\alpha}:\Upsilon\rightarrow \mathbb{R}$ and the corresponding value function $V$ as follows:
%is defined by the function $V_{\alpha}:\Upsilon\rightarrow \mathbb{R}$ as follows
\begin{equation}\label{ordinaryexpectedcost}
V_{\alpha}(z) := \mathbb{E}_{z}^{\alpha}\left[\int_{h}^T c(X_s^{\alpha}(\phi),\alpha(X_s^{\alpha}))\mathrm{d}s + g(X_T^{\alpha}(\phi))\right], \quad z:=(v,d,h)\in \Upsilon,
\end{equation}
%where $ \mathbb{E}_{z}^{\alpha}$ denotes the conditional expectation given that $X_{\tau + h}^{\alpha}=(\phi_{\tau}^{\alpha}(z),z)$ and $X_s^{\alpha}(\phi)$ is the first component of $X_s^{\alpha}$.
%Let's recall that if we write $X_s^{\alpha} = (\phi_{s-T_n}^{\alpha}(v_n,d_n,0,T_n,v_n),d_n,s-T_n,T_n,v_n)$ for $T_n\leq s<T_{n+1}$, then $\alpha(X_s^{\alpha})$ is to be understood as $\alpha(d_n,s-T_n,T_n,v_n)$, also written $\alpha_s(Z_n)$. 
%The corresponding value function is
\begin{equation}\label{ordinaryvaluefunction}
V(z) = \inf_{\alpha\in\mathcal{A}} V_{\alpha}(z), \quad z\in \Upsilon.
\end{equation}
\end{definition}

\medskip

%where the infimum is taken over all strategies $\alpha\in\mathcal{A}$. 
\noindent Assumption \ref{hyp:cost} ensures that $V_{\alpha}$ and $V$ are properly defined. 

\begin{definition}{Relaxed value function for the PDMP $X^{\mu}$.}%$\, X^\alpha$}

\noindent For $\mu\in\mathcal{A}^{\mathcal{R}}$ we define the relaxed expected cost function $V_{\mu} : \Upsilon \rightarrow \mathbb{R}$ and the corresponding relaxed value function $\tilde{V}$ as follows:

\begin{equation}\label{relaxedexpectedcost}
V_{\mu}(z) := \mathbb{E}_{z}^{\mu}\left[\int_{h}^T \int_Z c(X_s^{\mu}(\phi),u)\mu(X_s^{\mu})(\mathrm{d}u)\mathrm{d}s + g(X_T^{\mu}(\phi))\right], \quad z:=(v,d,h)\in \Upsilon,
\end{equation}

\begin{equation}\label{relaxedvaluefunction}
\tilde{V}(z) = \inf_{\mu\in\mathcal{A}^{\mathcal{R}}} V_{\mu}(z), \quad z\in \Upsilon.
\end{equation}

\end{definition}

\bigskip

\noindent We can now state the main result of this section. 

\begin{theorem}\label{TheoremExistenceOptimalControl}
Under assumptions \ref{hyp:lambda}, \ref{hyp:Q}, \ref{hyp:L}, \ref{hyp:f} and \ref{hyp:cost}, the value function $\tilde{V}$ of the relaxed optimal control problem on the PDMP is continuous on $\Upsilon$ and there exists an optimal relaxed control strategy $\mu^*\in\mathcal{A}^{\mathcal{R}}$ such that
\[
\tilde{V}(z) = V_{\mu^*}(z),\qquad \forall z\in\Upsilon.
\]
	
\end{theorem}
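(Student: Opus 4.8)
Proof proposal:

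The plan is to transfer the continuous-time control problem to the embedded relaxed Markov Decision Process of Sections \ref{MDP} and 2.3, and then to invoke the dynamic programming theory for \emph{contracting} MDPs on Borel spaces (\cite{BertsekasShreve}, \cite{MDPBook}). The first step is to show that the two optimal control problems are equivalent: for every $\mu\in\mathcal{A}^{\mathcal{R}}$ and $z=(v,d,h)\in\Upsilon$, one splits the integral in (\ref{relaxedexpectedcost}) along the jump times and uses the strong Markov property of Theorem \ref{THEOGENERATOR}a) together with the iterative construction of the PDMP (Appendix \ref{iteration}) to get $V_{\mu}(z)=\sum_{n\geq 0}\mathbb{E}_z^{\mu}\big[c'(Z'_n,\mu(Z'_n))\big]$, where, after a Fubini argument using $\tfrac{\mathrm{d}}{\mathrm{d}t}\chi^{\gamma}_t(z)=-\lambda_d(\phi^{\gamma}_t(z),\gamma(t))\chi^{\gamma}_t(z)$, the one-stage cost of the MDP is
\[
c'(z,\gamma):=\int_0^{T-h}\int_Z c\big(\phi_t^{\gamma}(z),u\big)\,\gamma(t)(\mathrm{d}u)\,\chi_t^{\gamma}(z)\,\mathrm{d}t+\chi_{T-h}^{\gamma}(z)\,g\big(\phi_{T-h}^{\gamma}(z)\big),\qquad c'(\Delta_{\infty},\cdot):=0.
\]
Since $Z'_n=Z_n$ as long as $T_n\leq T$, this gives that $\tilde V$ coincides with the value function of the relaxed MDP, so it suffices to treat the latter.

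The core of the argument, and what I expect to be the main obstacle, is to exhibit a bounding function making the MDP contracting despite the unboundedness allowed in \ref{hyp:cost}. I would set $b(v,d,h):=\kappa\,e^{\mu(T-h)}\big(1+\|v\|_H^2\big)$ on $\Upsilon$ and $b(\Delta_{\infty}):=1$, with $\kappa,\mu>0$ to be fixed. From the mild formulation (\ref{relaxedSurvival}), assumptions \ref{hyp:L} and \ref{hyp:f} (or \ref{hyp:fprim}) and Gronwall's lemma one gets $\sup_{t\in[0,T]}\|\phi_t^{\gamma}(z)\|_H^2\leq A\|v\|_H^2+B$ for constants $A,B$ depending only on $T,M_S,b_1,b_2$ (this refines Theorem \ref{THEOGENERATOR}b)); combining this with $\delta\leq\lambda_d\leq M_\lambda$ from \ref{hyp:lambda} and the explicit identity $\int_\Upsilon b\,\mathrm{d}\mathcal{Q}'(\cdot|z,\gamma)=\int_0^{T-h}b\big(\phi^{\gamma}_t(z),d,h+t\big)\,\lambda_d(\phi^{\gamma}_t(z),\gamma(t))\,\chi^{\gamma}_t(z)\,\mathrm{d}t$ (valid since $b$ does not depend on the discrete coordinate and $\mathcal{Q}(D|\cdot)=1$), a direct computation yields $\int_\Upsilon b\,\mathrm{d}\mathcal{Q}'(\cdot|z,\gamma)\leq\frac{\bar A\,M_\lambda}{\mu+\delta}\,b(z)$ with $\bar A:=\max(A,1+B)$. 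Choosing $\mu$ so that $\mu+\delta>\bar A\,M_\lambda$ makes the contraction modulus $\alpha_b:=\frac{\bar A\,M_\lambda}{\mu+\delta}<1$; the same estimates and \ref{hyp:cost} give $c'(z,\gamma)\leq c_1\,b(z)$ for some $c_1>0$. Hence the relaxed MDP is contracting with respect to $b$.

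It then remains to verify the continuity hypotheses. The action space $\mathcal{R}([0,T],U)$ is compact and metrizable (Section \ref{RelaxControlSection}); by Theorem \ref{theoPapageorgiou}b) the map $\gamma\mapsto\phi^{\gamma}_{\cdot}(z)$ is continuous into $C([0,T],H)$, jointly in $(z,\gamma)$ after an additional Gronwall estimate, and then $\gamma\mapsto\chi^{\gamma}_{\cdot}(z)$ is continuous by \ref{hyp:lambda}. Together with \ref{hyp:lambda}, \ref{hyp:Q} and dominated convergence, this shows that $c'$ is continuous on $\Upsilon\times\mathcal{R}([0,T],U)$, that $\mathcal{Q}'$ is weakly continuous, and that $(z,\gamma)\mapsto\int_\Upsilon b\,\mathrm{d}\mathcal{Q}'(\cdot|z,\gamma)$ is continuous, i.e. $\mathcal{Q}'$ is continuous relative to $b$. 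Consequently the Bellman operator $\mathcal{T}w(z)=\inf_{\gamma\in\mathcal{R}([0,T],U)}\big(c'(z,\gamma)+\int_\Upsilon w\,\mathrm{d}\mathcal{Q}'(\cdot|z,\gamma)\big)$ maps the Banach space of continuous $b$-bounded functions into itself — continuity of the infimum over the compact set $\mathcal{R}([0,T],U)$ being Berge's maximum theorem — and is an $\alpha_b$-contraction for the $b$-weighted supremum norm; its unique fixed point is $\tilde V$, which is therefore continuous on $\Upsilon$ (value iteration $\mathcal{T}^n 0\to\tilde V$ converges uniformly on each set $\{b\leq R\}$). Finally, again by Berge's theorem, the minimizing correspondence $z\mapsto\{\gamma\in\mathcal{R}([0,T],U):c'(z,\gamma)+\int_\Upsilon\tilde V\,\mathrm{d}\mathcal{Q}'(\cdot|z,\gamma)=\tilde V(z)\}$ is nonempty, compact-valued and upper semicontinuous, hence admits a measurable selector $\mu^*\in\mathcal{A}^{\mathcal{R}}$ (Kuratowski--Ryll-Nardzewski); by the verification part of dynamic programming together with the reduction of the first step applied to $\mu^*$, one obtains $V_{\mu^*}(z)=\tilde V(z)$ for all $z\in\Upsilon$, which concludes.
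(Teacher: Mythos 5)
Your proposal is correct and follows essentially the same route as the paper: reduction to the embedded relaxed MDP (Theorem \ref{EquivalenceTheo}), a bounding function of the form $b(v)e^{\zeta(T-h)}$ built from the quadratic cost \ref{hyp:cost} and the Gronwall bound on the flow so that the Bellman operator is a contraction on a weighted space of continuous functions (Lemmas \ref{definitionBgamma}, \ref{lemBoundingFunctionPDMP}, \ref{Tcontracting}), joint continuity of $(z,\gamma)\mapsto\phi^{\gamma}(z)$ and of the one-stage operator (Lemmas \ref{ContinuityTheo}, \ref{CONTINUITYLEMMA}), and Banach fixed point plus measurable selection (Theorem \ref{TheoContractingMDP}). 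Your explicit choice $\kappa e^{\mu(T-h)}(1+\Vert v\Vert_H^2)$ and the appeal to Berge and Kuratowski--Ryll-Nardzewski are only cosmetic variants of the paper's two-step bounding-function construction and its use of the Bertsekas--Shreve selection machinery.
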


\begin{remark}
All the subsequent results that lead to Theorem \ref{TheoremExistenceOptimalControl} would be easily transposable to the case of a lower semicontinuous cost function. We would then obtain a lower semicontinuous value function.	
\end{remark}

The next section is dedicated to proving Theorem \ref{TheoremExistenceOptimalControl} via the optimal control of the MDP introduced before. Let us briefly sum up what we are going to do. We first show that the optimal control problem of the PDMP is equivalent to the optimal control problem of the MDP and that an optimal control for the latter gives an optimal control strategy for the original PDMP. We will then build up a framework, based on so called bounding functions (see \cite{Rieder}), in which the value function of the MDP is the fixed point of a contracting operator. Finally, we show that under the assumptions of Theorem \ref{TheoremExistenceOptimalControl}, the relaxed PDMP $X^\mu$ belongs to this framework.

\subsection{Optimal control of the MDP}

Let us define the ordinary cost $c'$ on $\Upsilon\cup \{\Delta_{\infty}\}\times\mathcal{U}_{ad}([0,T];U)$ for the MDP defined in Section \ref{MDP}. For $z:=(v,d,h)\in \Upsilon$ and $a\in\mathcal{U}_{ad}([0,T];U)$,

\begin{equation}\label{ordinarycprime}
c'(z,a):=\int_0^{T-h} \chi_s^{a}(z)\; c(\phi_{s}^{a}(z), a(s)) \mathrm{d}s+\chi_{T-h}^{a}(z) g(\phi_{T-h}^{a}(z)),
\end{equation}
and $c'(\Delta_{\infty},a):=0$.

\noindent Assumption \ref{hyp:cost} allows $c'$ to be properly extended to $\mathcal{R}([0,T],U)$ by the formula
\begin{equation}\label{relaxedCost}
c'(z,\gamma) =\int_0^{T-h} \chi_s^{\gamma}(z) \int_Z  c(\phi_s^{\gamma}(z),u)\gamma(s)(\mathrm{d}u) \mathrm{d}s+  \chi_{T-h}^{\gamma}(z)g(\phi_{T-h}^{\gamma}(z)),
\end{equation}
and $c'(\Delta_{\infty},\gamma) = 0$ for $(z,\gamma)\in \Upsilon\times \mathcal{R}([0,T],U)$. We can now define the expected cost function and value function for the MDP.

\begin{definition}\label{CostMDP}{Cost and value functions for the MDP  $\, (Z'_n)$.}

\noindent For $\alpha \in \mathcal{A}\, $ (resp. $\mu\in \mathcal{A}^{\mathcal{R}}$), we define the total expected cost $J_{\alpha}$ (resp. $J_{\mu}$) and the value function $J$ (resp. $J'$) 

\begin{align*}
J_{\alpha}(z) & =  \mathbb{E}_{z}^{\alpha} \left[ \sum_{n=0}^{\infty}  c'(Z'_n,\alpha(Z'_n))\right], & J_{\mu}(z) & =  \mathbb{E}_{z}^{\mu} \left[ \sum_{n=0}^{\infty}   c'(Z'_n,\mu(Z'_n))\right]\, , \\
J(z) & =  \inf_{\alpha\in\mathcal{A}} J^{\alpha}(z), & J'(z) & =  \inf_{\mu\in\mathcal{A}^{\mathcal{R}}} J_{\mu}(z),
\end{align*}
for $z\in\Upsilon$ and with $\alpha(Z'_n)$ (resp. $\mu(Z'_n)$) being elements of $\mathcal{U}_{ad}([0,T],U)$ (resp. $\mathcal{R}([0,T],U)$).
\end{definition}
\noindent The finiteness of theses sums will by justified later by Lemma \ref{bBound}.

\subsubsection{The equivalence Theorem}

In the following theorem we prove that the relaxed expected cost function of the PDMP equals the one of the associated MDP. Thus, the value functions also coincide. For the finite-dimensional case we refer the reader to \cite{DavisBook} or \cite{Rieder} where the discrete component of the PDMP is a Poisson process and therefore the PDMP is entirely decoupled. The PDMPs that we consider are fully coupled.
 
\begin{theorem}\label{EquivalenceTheo}
The relaxed expected costs for the PDMP and the MDP coincide: $V_{\mu}(z)=J_{\mu}(z)$ for all $z \in \Upsilon$ and relaxed control $\mu \in\mathcal{A}^{\mathcal{R}}$. Thus, the value functions $\tilde{V}$ and $J'$ coincide on $ \Upsilon$.
\end{theorem}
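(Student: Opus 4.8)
The plan is to show $V_\mu(z) = J_\mu(z)$ for a fixed relaxed strategy $\mu\in\mathcal{A}^{\mathcal{R}}$ and $z=(v,d,h)\in\Upsilon$, and then take infima to get $\tilde V = J'$. The essential idea is to decompose the integral cost of the PDMP over the successive inter-jump intervals $[T_n, T_{n+1})$ and to recognize each expected inter-jump contribution as the one-step cost $c'$ of the MDP evaluated at the embedded point $Z'_n$. First I would write, using the definition of $(X_t^\mu)$ and the fact that between jumps the trajectory is the deterministic relaxed flow $\phi^\mu$ started at the last post-jump point,
\[
V_\mu(z) = \mathbb{E}_z^\mu\left[\sum_{n=0}^\infty \left(\int_{T_n\wedge T}^{T_{n+1}\wedge T}\int_Z c(X_s^\mu(\phi),u)\,\mu(X_s^\mu)(\mathrm du)\,\mathrm ds + \mathbf{1}_{\{T_n\le T<T_{n+1}\}}\,g(X_T^\mu(\phi))\right)\right],
\]
where the terminal cost $g$ is attached to the (unique) interval straddling $T$. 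Since the number of jumps on $[0,T]$ is a.s. finite by Assumption \ref{hyp:lambda}1 (regularity), the sum is really finite a.s. and the interchange of sum and expectation is justified by nonnegativity (Tonelli).

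Next I would condition on the embedded chain. By the construction of the PDMP recalled in Appendix \ref{iteration} and the one-to-one correspondence with the pure jump process $(Z_n)$, conditionally on $Z_n = (v_n,d_n,h_n)$ the law of the next inter-jump duration $S_{n+1}=T_{n+1}-T_n$ has survival function $\chi^\mu_\cdot(Z_n)$, the trajectory on $[T_n,T_{n+1})$ is $s\mapsto\phi^\mu_s(Z_n)$, and the relaxed control applied is $\mu(Z_n)$. Hence the conditional expectation of the $n$-th bracket, on the event $\{T_n\le T\}$, equals
\[
\int_0^{T-h_n}\chi^\mu_s(Z_n)\int_Z c(\phi^\mu_s(Z_n),u)\,\mu(Z_n)(s)(\mathrm du)\,\mathrm ds + \chi^\mu_{T-h_n}(Z_n)\,g(\phi^\mu_{T-h_n}(Z_n)),
\]
which is exactly $c'(Z_n,\mu(Z_n))$ by \eqref{relaxedCost}; one checks the terminal term arises precisely from the event $\{T_{n+1}>T\}$, whose conditional probability given $Z_n$ is $\chi^\mu_{T-h_n}(Z_n)$, the trajectory then being evaluated at time $T-h_n$. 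On the event $\{T_n>T\}$ the bracket vanishes, matching $Z'_n=\Delta_\infty$ and $c'(\Delta_\infty,\cdot)=0$; and since $Z'_n=Z_n$ as long as $T_n\le T$, we may replace $Z_n$ by $Z'_n$ throughout. Summing over $n$, using the tower property and Tonelli again, gives $V_\mu(z)=\mathbb{E}_z^\mu[\sum_{n\ge0}c'(Z'_n,\mu(Z'_n))]=J_\mu(z)$. Taking the infimum over $\mu\in\mathcal{A}^{\mathcal{R}}$ yields $\tilde V(z)=J'(z)$ on $\Upsilon$.

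The main obstacle I anticipate is not the algebra of the decomposition but the bookkeeping of the terminal cost and the careful justification that the conditional law of $(S_{n+1},\text{trajectory},\text{post-jump point})$ given the whole past depends only on $Z_n'$ (the strong Markov / regenerative structure of the enlarged PDMP), which is where Theorem \ref{THEOGENERATOR}a and the iterative construction of Appendix \ref{iteration} are used; some care is also needed because the PDMP here is fully coupled (unlike the decoupled Poisson-driven case of \cite{Rieder},\cite{DavisBook}), so the dependence of $\lambda$ and $\mathcal{Q}$ on the continuous component must be carried along, but this is already encoded in $\chi^\mu$, $\phi^\mu$ and $\mathcal{Q}'$ and causes no additional difficulty. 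Finiteness of $J_\mu$ (hence of the sums in Definition \ref{CostMDP}) will follow a posteriori from Lemma \ref{bBound}, as announced, but is not needed for the identity itself since everything is nonnegative and the equality holds in $[0,+\infty]$.
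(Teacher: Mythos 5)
Your proposal is correct and follows essentially the same route as the paper: decompose $V_\mu$ over the inter-jump intervals, interchange sum and expectation by nonnegativity, condition on the embedded chain (strong Markov plus flow property), identify each conditional inter-jump contribution with $c'(Z'_n,\mu(Z'_n))$, and sum. The only cosmetic difference is that you obtain the inner conditional expectation of the running cost directly (in effect via Tonelli, $\mathbb{E}\int_0^{(T-h_n)\wedge S_{n+1}}\ell(s)\,\mathrm{d}s=\int_0^{T-h_n}\chi^{\mu}_s(Z_n)\ell(s)\,\mathrm{d}s$), whereas the paper splits according to $\{T_{n+1}>T\}$ versus $\{T_{n+1}\le T\}$ and recombines by an integration by parts — the same computation in a different guise.
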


\begin{remark}
Since we have $\mathcal{A}\subset \mathcal{A}^{\mathcal{R}}$, the value functions $V_{\alpha}(z)$ and $J_{\alpha}(z)$ also coincide for all $z \in \Upsilon$  and ordinary control strategy $\alpha \in\mathcal{A}$
\end{remark}

\begin{proof} Let $\mu\in\mathcal{A}^{\mathcal{R}}$ and $z=(v,d,h)\in \Upsilon$ and consider the PDMP $X^{\mu}$ starting at $(v,d,0,h,v)\in\Xi^{\mu}$. We drop the dependence in the control in the notation and denote by $(T_n)_{n\in\N}$ the jump times, and $Z_n:=(v_{T_n},d_{T_n},T_n)\in \Upsilon$ the point in $\Upsilon$ corresponding to $X_{T_n}^{\mu}$.  Let $H_n = (Z_0,\dots,Z_n)$, $T_n\leq T$. For a purpose of concision we will rewrite $\mu^n := \mu(Z_n) \in \mathcal{R}([0,T],U)$ for all $n\in\mathbb{N}$.
\begin{align*}
V_{\mu}(z) & =  \mathbb{E}_{z}^{\mu} \left[ \sum_{n=0}^{\infty} \int_{T\wedge T_n}^{T\wedge T_{n+1}} \int_Z c(X_s^{\mu}(\phi),u)\mu^n_{s-T_n}(\mathrm{d}u)\mathrm{d}s + \mathbf{1}_{\{T_n\leq T < T_{n+1}\}}g(X_T^{\mu}(\phi))\right]\\
 &= \sum_{n=0}^{\infty}\mathbb{E}_{z}^{\mu} \left[ \mathbb{E}_{z}^{\mu} \left[ \int_{T\wedge T_n}^{T\wedge T_{n+1}}\int_Z c(X_s^{\mu}(\phi),u)\mu^n_{s-T_n}(\mathrm{d}u)\mathrm{d}s + \mathbf{1}_{\{T_n\leq T < T_{n+1}\}}g(X_T^{\mu}(\phi))|H_n \right] \right],
\end{align*}
all quantities being non-negative.
We want now to examine the two terms that we call ${\cal I}_1$ and ${\cal I}_2$ separately. For $n\in \mathbb{N},$ we start with 
\begin{equation*}
{\cal I}_1:=\mathbb{E}_{z}^{\mu} \left[ \int_{T\wedge T_n}^{T\wedge T_{n+1}} \int_Z c(X_s^{\mu}(\phi),u)\mu^n_{s-T_n}(\mathrm{d}u)\mathrm{d}s |H_n \right]
\end{equation*}
that we split according to $T_n\leq T <T_n+1$ or $T_{n+1}\leq T\, $ (if $\, T\leq T_n$, the corresponding term vanishes). Then 
\begin{align*}
{\cal I}_1&=\mathbf{1}_{\{T_n\leq T\}} \mathbb{E}_{z}^{\mu} \left[\int_{T_n}^T \int_Z c(X_s^{\mu}(\phi),u)\mu^n_{s-T_n}(\mathrm{d}u)\mathbf{1}_{\{T_{n+1}>T\}}\mathrm{d}s|H_n \right]\\
 &\qquad + \mathbb{E}_{z}^{\mu} \left[ \mathbf{1}_{\{T_{n+1}\leq T\}}\int_{T_n}^{T_{n+1}} \int_Z c(X_s^{\mu}(\phi),u)\mu^n_{s-T_n}(\mathrm{d}u)\mathrm{d}s|H_n \right].
\end{align*}
By the strong Markov property and the flow property, the first term on the RHS is equal to  
\begin{align*} 
 &\mathbf{1}_{\{T_n\leq T\}} \mathbb{E}_{z}^{\mu}  \Bigg[\int_0^{T-T_n} \int_Z c(X_{T_n+s}^{\mu}(\phi),u)\mu^n_{s}(\mathrm{d}u) \mathbf{1}_{\{T_{n+1}-T_n>T-T_n\}}\mathrm{d}s|H_n  \Bigg]\\ 
 & =  \mathbf{1}_{\{T_n\leq T\}}\chi_{T-T_n}^{\mu}(Z_n) \int_0^{T-T_n} \int_Z c(\phi_s^{\mu}(Z_n),u)\mu^n_s(\mathrm{d}u)\mathrm{d}s.
\end{align*}

\noindent Using the same arguments, the second term on the RHS of ${\cal I}_1$ can be written as
\begin{equation*}
 \mathbf{1}_{\{T_n\leq T\}}  \int_0^{T-T_n} \int_Z \lambda_{d_n}(\phi_t^{\mu}(Z_n),u)\mu_t^n(\mathrm{d}u)\chi_t^{\mu}(Z_n)
\int_0^t \int_Z c(\phi_s^{\mu}(Z_n),u)\mu_t^n(\mathrm{d}u)\mathrm{d}s\mathrm{d}t,
\end{equation*}

\noindent An integration by parts yields 
\begin{equation*}
 {\cal I}_1=\mathbf{1}_{\{T_n\leq T\}}  \int_0^{T-T_n}  \chi_t^{\mu}(Z_n) \int_Z c(\phi_t^{\alpha}(Z_n),u)\mu_t^n(\mathrm{d}u)\mathrm{d}t.
\end{equation*}
Moreover 
\begin{equation*}
 \mathcal{I}_2:=\mathbb{E}_{z}^{\mu} \left[ \mathbf{1}_{\{T_n\leq T < T_{n+1}\}}g(X_T^{\mu})|H_n \right]  =  \mathbf{1}_{\{T_n\leq T\}}\chi_{T-T_n}^{\mu}(Z_n)g(\phi_{T-T_n}^{\mu}(Z_n))
\end{equation*}
By definition of the Markov chain $(Z'_n)_{n\in\mathbb{N}}$ and the function $c'$, we then obtain for the total expected cost of the PDMP, 
\begin{align*}
V_{\mu}(z) & = \sum_{n=0}^{\infty}\mathbb{E}_{z}^{\mu} \Bigg[    \mathbf{1}_{\{T_n\leq T\}}   \int_0^{T-T_n}  \chi_t^{\mu}(Z_n) \int_Z c(\phi_t^{\alpha}(Z_n),u)\mu_t^n(\mathrm{d}u)\mathrm{d}t\\
& \qquad \qquad  + \mathbf{1}_{\{T_n\leq T\}}  \, \, \chi_{T-T_n}^{\mu}(Z_n)g(\phi_{T-T_n}^{\mu}(Z_n))\Bigg]\\
& =  \mathbb{E}_{z}^{\mu} \left[ \sum_{n=0}^{\infty} c'(Z'_n,\mu(Z'_n))\right] = J_{\mu}(z).
\end{align*}
\end{proof}

\subsubsection{Existence of optimal controls for the MDP}

We now show existence of optimal relaxed controls under a contraction assumption. We use the notation $\mathcal{R}:=\mathcal{R}([0,T];U)$ in the sequel. Let us also recall some notations regarding the different control sets we consider.

\begin{itemize}
\item $u$ is an element of the control set $U$.
\item $a:[0,T]\rightarrow U$ is an element of the space of admissible control rules $\mathcal{U}_{ad}([0,T],U)$
\item $\alpha : \Upsilon\rightarrow \mathcal{U}_{ad}([0,T],U)$ is an element of the space of admissible strategies for the original PDMP.
\item $\gamma : [0,T] \rightarrow M_+^1(Z)$ is an element of the space of relaxed admissible control rules $\mathcal{R}$.
\item $\mu : \Upsilon \rightarrow \mathcal{R}$ is an element of the space of relaxed admissible strategies for the relaxed PDMP.

\end{itemize}

The classical way to address the discrete-time stochastic control problem that we introduced in Definition \ref{CostMDP} is to consider an additional control space that we will call the space of Markovian policies and denote by $\Pi$. Formally $\Pi := \left(\mathcal{A}^\mathcal{R}\right)^{\N}$ and a Markovian control policy for the MDP is a sequence of relaxed admissible strategies to be applied at each stage. The optimal control problem is to find $\pi:=(\mu_n)_{n\in\N}\in\Pi$ that minimizes

\[
 J_{\pi}(z) := \mathbb{E}_{z}^{\pi} \left[ \sum_{n=0}^{\infty} c'(Z'_n,\mu_n(Z'_n))\right].
\]

Now denote by $J^*(z)$ this infimum. We will in fact prove the existence of a stationary optimal control policy that will validate the equality 

\[
J^*(z) = J'(z).
\]

Let us now define some operators that will be useful for our study and state the first theorem of this section. Let $w:\Upsilon \rightarrow \R$ a continuous function, $(z,\gamma,\mu)\in \Upsilon\times \mathcal{R}\times\mathcal{A}^{\mathcal{R}}$ and define

\begin{align*}
Rw(z,\gamma)&:= c'(z,\gamma) + (\mathcal{Q}'w)(z,\gamma),\\
\mathcal{T}_{\mu}w(z)  &:= c'(z,\mu(z)) + (\mathcal{Q}'w)(z,\mu(z))=  Rw(z,\mu(z)),\\
(\mathcal{T}w)(z) & := \inf_{\gamma \in \mathcal{R}} \left\{c'(z,\gamma) + (\mathcal{Q}'w)(z,\gamma)\right\} = \inf_{\gamma \in \mathcal{R}} Rw(z,\gamma),
\end{align*}
where $(\mathcal{Q}'w)(z,\gamma):=\int_{\Upsilon} w(x)\mathcal{Q}'(\mathrm{d}x|z,\gamma)$ which admits also the expression

\begin{equation*}
\int_0^{T-h} \chi_t^{\gamma}(z)\,\int_Z \lambda_{d}\Big(\phi_{t}^{\gamma}(z),u\Big)\int_{D} w\Big(\phi_{t}^{\gamma}(z),r,h+t\Big) \mathcal{Q}\Big(\mathrm{d}r|\phi_{t}^{\gamma}(z),d,u\Big)\gamma(t)(\mathrm{d}u)\mathrm{d}t. 
\end{equation*}

\begin{theorem}\label{TheoContractingMDP}
Assume that there exists a subspace $\mathbb{C}$ of the space of continuous bounded functions from $\Upsilon$ to $\R$ such that the operator $\mathcal{T}: \mathbb{C}\rightarrow \mathbb{C}$ is contracting and the zero function belongs to $\mathbb{C}$. Assume furthermore that $\mathbb{C}$ is a Banach space. Then $J'$ is the unique fixed point of $\mathcal{T}$ and there exists an optimal control $\mu^*\in\mathcal{A}^{\mathcal{R}}$ such that
\[
J'(z) = J_{\mu^*}(z), \qquad \forall z\in \Upsilon.
\]

\end{theorem}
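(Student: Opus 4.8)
This is a standard fixed-point / dynamic-programming argument for a contracting Markov Decision Process, and the plan is to exploit the Banach fixed point theorem together with a measurable-selection argument. Let me lay out the steps.

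First, I would establish that $\mathcal{T}$ has a unique fixed point $J^\star_{\mathbb{C}} \in \mathbb{C}$. This is immediate from the Banach fixed point theorem: $\mathbb{C}$ is a Banach space by hypothesis, and $\mathcal{T}:\mathbb{C}\to\mathbb{C}$ is a contraction by hypothesis. So there is a unique $w^\star\in\mathbb{C}$ with $\mathcal{T}w^\star=w^\star$, and moreover $\mathcal{T}^n 0\to w^\star$ in the norm of $\mathbb{C}$. The real work is to identify $w^\star$ with the value function $J'$ and to extract an optimal stationary relaxed strategy.

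**Identifying the fixed point with the value function.** The plan is the usual two-sided inequality. For the inequality $J'(z)\ge w^\star(z)$: iterate the operator. One shows by induction that for any Markovian policy $\pi=(\mu_n)_{n\in\N}\in\Pi$ and any $N$, the finite-horizon cost satisfies $\mathbb{E}_z^\pi\big[\sum_{n=0}^{N-1}c'(Z'_n,\mu_n(Z'_n))\big] = \mathcal{T}_{\mu_0}\mathcal{T}_{\mu_1}\cdots\mathcal{T}_{\mu_{N-1}}0\,(z) \ge \mathcal{T}^N 0\,(z)$, using monotonicity of each $\mathcal{T}_{\mu}$ and of $\mathcal{T}$ (which follows from nonnegativity of $c'$, positivity of $\mathcal{Q}'$, and the fact that $\mathcal{T}w\le\mathcal{T}_\mu w$ pointwise). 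Letting $N\to\infty$, the left side converges to $J_\pi(z)$ (all terms nonnegative, monotone convergence) and the right side to $w^\star(z)$, so $J_\pi(z)\ge w^\star(z)$ for every $\pi$, hence $J^\star(z)\ge w^\star(z)$, and a fortiori $J'(z)\ge w^\star(z)$ since restricting to stationary strategies only increases the infimum. For the reverse inequality, I need a measurable selector: given $\varepsilon>0$, since $\gamma\mapsto Rw^\star(z,\gamma)$ attains values arbitrarily close to $(\mathcal{T}w^\star)(z)=w^\star(z)$, and since the relaxed control space $\mathcal{R}$ is compact metrizable (established in Section~3.1) while $z\mapsto\inf_\gamma Rw^\star(z,\gamma)$ is measurable, one invokes a standard measurable-selection theorem (e.g.\ from \cite{BertsekasShreve}, which the paper has already announced it will use to avoid measurability pains) to obtain $\mu^\star\in\mathcal{A}^{\mathcal{R}}$ with $\mathcal{T}_{\mu^\star}w^\star \le w^\star + \varepsilon$ — in fact, since we want an exact optimizer, the cleanest route is to first prove continuity of $\gamma\mapsto Rw^\star(z,\gamma)$ on the compact set $\mathcal{R}$ so that the infimum is attained, then select measurably. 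Then by induction $J_{\mu^\star,N}(z):=\mathbb{E}_z^{\mu^\star}[\sum_{n=0}^{N-1}c'] = (\mathcal{T}_{\mu^\star})^N 0\,(z)$ and, using $\mathcal{T}_{\mu^\star}w^\star=w^\star$ together with the contraction estimate $\|(\mathcal{T}_{\mu^\star})^N 0 - w^\star\|\to 0$, one gets $J_{\mu^\star}(z)=\lim_N J_{\mu^\star,N}(z)=w^\star(z)$. Combining, $w^\star(z)\le J_{\mu^\star}(z)=w^\star(z)\le J'(z)\le J_{\mu^\star}(z)$, forcing $J'(z)=w^\star(z)=J_{\mu^\star}(z)$ for all $z\in\Upsilon$, which is exactly the assertion (and incidentally $J^\star=J'$, the stationary policy being optimal among all Markovian ones).

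**The main obstacle.** The genuinely delicate point is the measurable selection yielding $\mu^\star\in\mathcal{A}^{\mathcal{R}}$ that \emph{exactly} attains the infimum in $(\mathcal{T}w^\star)(z)=\inf_{\gamma\in\mathcal{R}}Rw^\star(z,\gamma)$ for every $z$ simultaneously, in a measurable way. This requires: (i) compactness of $\mathcal{R}=\mathcal{R}([0,T];U)$ in the $w^*$-metrizable topology — available from Section~3.1 via Alaoglu; (ii) lower semicontinuity (ideally continuity) of $\gamma\mapsto Rw^\star(z,\gamma)$, which in turn needs continuity of $\gamma\mapsto\phi^\gamma_t(z)$ (Theorem~3.5b / Papageorgiou), continuity of $\gamma\mapsto\chi^\gamma_t(z)$, weak continuity of $\mathcal{Q}$ in the control (assumption \ref{hyp:Q}), continuity of $\lambda$ (assumption \ref{hyp:lambda}2), and continuity of $c'$ (from \ref{hyp:cost}), all assembled under the integral over $[0,T-h]$ with a dominated-convergence argument using the boundedness of $\lambda$ (\ref{hyp:lambda}1) and the estimate of Theorem~3.4b to keep $\phi^\gamma$ in a fixed compact set; and (iii) joint measurability in $z$, handled by the Jankov--von Neumann / Kuratowski--Ryll-Nardzewski selection theorem as packaged in \cite{BertsekasShreve}. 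I would isolate the continuity of $\gamma\mapsto Rw^\star(z,\gamma)$ as a separate lemma, since it is the crux and will be reused. Everything else — the induction identities, the monotonicity, the passage to the limit — is routine once nonnegativity of the costs and the contraction property are in hand.
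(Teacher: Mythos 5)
Your proposal follows essentially the same route as the paper: Banach's fixed point theorem for $\mathcal{T}$, the monotonicity inequality $\mathcal{T}_{\mu_0}\cdots\mathcal{T}_{\mu_{N-1}}0\ge \mathcal{T}^N0$ to obtain $J'\ge w^\star$, and an exact measurable selector $\mu^\star$ with $\mathcal{T}_{\mu^\star}w^\star=w^\star$ obtained from compactness of $\mathcal{R}$ and continuity of $\gamma\mapsto Rw^\star(z,\gamma)$, which the paper likewise isolates as a separate continuity lemma (Lemma~\ref{CONTINUITYLEMMA}). The one step to repair is your appeal to a ``contraction estimate'' $\|(\mathcal{T}_{\mu^\star})^N0-w^\star\|\to0$: the hypotheses make $\mathcal{T}$, not $\mathcal{T}_{\mu^\star}$, a contraction, so argue instead by monotonicity, using $0\le w^\star$ to squeeze $\mathcal{T}^N0\le(\mathcal{T}_{\mu^\star})^N0\le(\mathcal{T}_{\mu^\star})^Nw^\star=w^\star$, hence $J_{\mu^\star}=\lim_N(\mathcal{T}_{\mu^\star})^N0\le w^\star$; this is precisely the paper's device ($J_\infty=\mathcal{T}_{\mu^\star}^nJ_\infty\ge\mathcal{T}_{\mu^\star}^n0$), and combined with $w^\star\le J'\le J_{\mu^\star}$ it yields the stated conclusion.
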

All the results needed to prove this Theorem can be found in \cite{BertsekasShreve}. We break down the proof into the two following elementary propositions, suited to our specific problem. 
Before that, recall that from \cite[Proposition 9.1 p.216]{BertsekasShreve}, $\Pi$ is the adequate control space to consider since history-dependent policies does not improve the value function.

Let us now consider the $n$-stages expected cost function and value function defined by

\begin{equation*}
J_{n\pi}(z) := \mathbb{E}_z^{\pi}\left[\sum_{i=0}^{n-1} c'\Big(Z'_i,\mu_i(Z'_i)\Big)\right] \qquad J_{n}(z) :=\inf_{\pi \in \Pi}  \mathbb{E}_z^{\pi}\left[\sum_{i=0}^{n-1} c'\Big(Z'_i,\mu_i(Z'_i)\Big)\right]
\end{equation*}
for $n\in\N$ and $\pi:=(\mu_n)_{n\in\N}\in\Pi$. We also set $J_{\infty} := \lim_{n\to \infty} J_n$.

\begin{proposition}

Let assumptions of Theorem \ref{TheoremExistenceOptimalControl} hold. Let $v,w: \Upsilon\to \R$ such that $v\leq w$ on $\Upsilon$, and let $\mu\in\mathcal{A}^{\mathcal{R}}$. Then $\mathcal{T}_{\mu}v \leq \mathcal{T}_{\mu}w$.
Moreover 
\[
J_n(z) = \inf_{\pi\in\Pi} (\mathcal{T}_{\mu_0}\mathcal{T}_{\mu_1}\dots\mathcal{T}_{\mu_{n-1}} 0)(z) = (\mathcal{T}^n0)(z),
\]
with $\pi:=(\mu_n)_{n\in\N}$ and $J_{\infty}$ is the unique fixed point of $\mathcal{T}$ in $\mathbb{C}$.
\end{proposition}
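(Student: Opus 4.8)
The statement has three parts: monotonicity of $\mathcal{T}_\mu$, the identity $J_n = \mathcal{T}^n 0$, and the fixed-point characterization of $J_\infty$. I would treat them in that order, since each relies on the previous one.

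\textbf{Step 1: Monotonicity.} The inequality $\mathcal{T}_\mu v \le \mathcal{T}_\mu w$ when $v \le w$ is immediate from the definition $\mathcal{T}_\mu w(z) = c'(z,\mu(z)) + (\mathcal{Q}' w)(z,\mu(z))$: the cost term $c'(z,\mu(z))$ is common to both, and $\mathcal{Q}'(\cdot \mid z, \mu(z))$ is a (sub)probability kernel on $\Upsilon$, so $w \mapsto (\mathcal{Q}' w)(z,\mu(z)) = \int_\Upsilon w(x)\,\mathcal{Q}'(\mathrm{d}x \mid z,\mu(z))$ is monotone by positivity of the integral. This also gives monotonicity of $\mathcal{T}$ itself (taking infima over $\gamma$ of monotone quantities preserves the order), which I will use in Step 3.

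\textbf{Step 2: The dynamic programming recursion.} The identity $J_{n\pi} = \mathcal{T}_{\mu_0}\mathcal{T}_{\mu_1}\cdots\mathcal{T}_{\mu_{n-1}} 0$ for a fixed policy $\pi = (\mu_n)$ follows by induction on $n$, unwinding the tower property of conditional expectations along the MDP $(Z'_n)$: conditioning on $Z'_0$ and using the Markov property of the kernel $\mathcal{Q}'$, one peels off the first stage cost $c'(Z'_0,\mu_0(Z'_0))$ and is left with the $(n-1)$-stage cost of the shifted policy started from $Z'_1$, whose conditional law is $\mathcal{Q}'(\cdot\mid Z'_0,\mu_0(Z'_0))$. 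Taking the infimum over $\pi \in \Pi$ and invoking the standard interchange-of-infimum result for Borel MDPs --- \cite[Proposition 9.1 / Corollary 9.4.1]{BertsekasShreve}, legitimate here because $\Upsilon$ and $\mathcal{R}$ are Borel spaces, as noted after Definition~\ref{CostMDP} --- gives $J_n(z) = \inf_{\pi}(\mathcal{T}_{\mu_0}\cdots\mathcal{T}_{\mu_{n-1}}0)(z) = (\mathcal{T}^n 0)(z)$, the last equality because minimizing stage by stage is the same as applying $\mathcal{T}$ (which already contains the infimum over one relaxed rule) $n$ times.

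\textbf{Step 3: Identification of $J_\infty$.} Since $c' \ge 0$ (by \ref{hyp:cost}, $c$ and $g$ are nonnegative), the sequence $J_n = \mathcal{T}^n 0$ is nondecreasing by Step 1's monotonicity of $\mathcal{T}$ applied to $0 \le \mathcal{T}0$, so $J_\infty = \lim_n \mathcal{T}^n 0$ exists pointwise (possibly $+\infty$ a priori, but finite here; cf. Lemma~\ref{bBound}). Under the hypothesis of Theorem~\ref{TheoContractingMDP}, $\mathcal{T}$ is a contraction on the Banach space $\mathbb{C}$ containing $0$, so by Banach's fixed point theorem the iterates $\mathcal{T}^n 0$ converge in $\mathbb{C}$ to the unique fixed point of $\mathcal{T}$; since convergence in $\mathbb{C}$ (sup-norm) implies pointwise convergence, that fixed point must coincide with $J_\infty$. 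Hence $J_\infty$ is the unique fixed point of $\mathcal{T}$ in $\mathbb{C}$.

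\textbf{Anticipated main obstacle.} The routine parts are Steps 1 and 3; the delicate point is the interchange of infimum and expectation in Step 2, i.e.\ justifying $\inf_\pi \mathbb{E}_z^\pi[\cdots] = (\mathcal{T}^n 0)(z)$ rather than merely $\ge$. This is exactly where the Borel-space measurable-selection machinery of \cite{BertsekasShreve} is needed: one must know that the infimum over relaxed rules in the definition of $\mathcal{T}$ is attained (or approximated) by a \emph{measurable} selector $\mu$, so that the stage-wise minimization assembles into an admissible Markovian policy. Having set up $\Upsilon$, $D$, and $\mathcal{R}([0,T],U)$ as Borel spaces (and with $\mathcal{R}$ compact metrizable, from Section~\ref{relaxedcontrolsInfDim}), this is covered by the universally-measurable selection theorems there, but it is the substantive ingredient and should be cited carefully.
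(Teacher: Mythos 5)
Your proof is correct and follows the same overall skeleton as the paper's: monotonicity is read off the nonnegativity of the quantities defining $\mathcal{Q}'$, the identity $J_{n\pi} = \mathcal{T}_{\mu_0}\cdots\mathcal{T}_{\mu_{n-1}}0$ is obtained by conditioning stage by stage (the paper cites \cite[Lemma 8.1, p194]{BertsekasShreve} for exactly this shift argument), and the fixed-point claim is the Banach fixed point theorem on $\mathbb{C}$, identifying the norm limit of $\mathcal{T}^n 0$ with the pointwise limit $J_\infty$. The one place where you genuinely diverge is the step you yourself single out as the main obstacle, namely $\inf_{\pi}\mathcal{T}_{\mu_0}\cdots\mathcal{T}_{\mu_{n-1}}0 = \mathcal{T}^n 0$: you delegate it to the general interchange/measurable-selection results of \cite{BertsekasShreve}, whereas the paper proves it directly by a short $\varepsilon$-argument. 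Since each iterate $\mathcal{T}^k I$ lies in $\mathbb{C}$, one picks for every stage a strategy $\mu_k\in\mathcal{A}^{\mathcal{R}}$ with $\mathcal{T}_{\mu_k}\mathcal{T}^{n-1-k}I \le \mathcal{T}^{n-k}I + \varepsilon$, chains these inequalities using the monotonicity of Step 1 to get $\mathcal{T}^n I \ge \inf_\pi \mathcal{T}_{\mu_0}\cdots\mathcal{T}_{\mu_{n-1}} I - n\varepsilon$, lets $\varepsilon\to 0$, and obtains the reverse inequality trivially from $\mathcal{T}\le\mathcal{T}_{\mu_{n-1}}$ together with monotonicity. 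The paper's route keeps the selection issue local and mild (only $\varepsilon$-optimal measurable selectors for continuous maps on the compact metrizable $\mathcal{R}$ are needed, one per stage), and so it does not have to reconcile the universally measurable policy framework of \cite{BertsekasShreve} with the Borel-measurable strategies constituting $\mathcal{A}^{\mathcal{R}}$; your citation-based route is shorter but, as you correctly flag, that compatibility check is the substantive ingredient and must be carried out. Both arguments then conclude identically via the contraction hypothesis.
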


\begin{proof}
The first relation is straightforward since all quantities defining $\mathcal{Q}'$ are nonnegative. The equality $J_n = \inf_{\pi\in\Pi} \mathcal{T}_{\mu_0}\mathcal{T}_{\mu_1}\dots\mathcal{T}_{\mu_{n-1}} 0$ is also immediate since $\mathcal{T}_{\mu}$ just shifts the process of one stage (see also \cite[Lemma 8.1, p194]{BertsekasShreve}).

Let $I\in\mathbb{C}$, $\varepsilon > 0$ and $n\in\N$. For every $k\in \{1..n-1\}$, $\mathcal{T}^kI\in\mathbb{C}$ and so there exist $\mu_0,\mu_1,\dots,\mu_{n-1}\in\left(\mathcal{A}^{\mathcal{R}}\right)^n$ such that 

\begin{equation*}
\mathcal{T}_{\mu_{n-1}}I \leq \mathcal{T}I + \varepsilon, \quad \mathcal{T}_{\mu_{n-2}}\mathcal{T}I \leq \mathcal{T}\mathcal{T}I + \varepsilon, \quad \dots \quad ,
\mathcal{T}_{\mu_{0}}\mathcal{T}^{n-1}I \leq \mathcal{T}\mathcal{T}^{n-1}I + \varepsilon.
\end{equation*}
We then get

\begin{align*}
\mathcal{T}^nI  &\geq \mathcal{T}_{\mu_{0}}\mathcal{T}^{n-1}I-\varepsilon \geq  \mathcal{T}_{\mu_{0}} \mathcal{T}_{\mu_{1}}\mathcal{T}^{n-2}I -2\varepsilon\geq \quad \cdots \quad  \geq \mathcal{T}_{\mu_{0}}\mathcal{T}_{\mu_{1}}\dots\mathcal{T}_{\mu_{n-1}} I - n\varepsilon\\
& \geq \inf_{\pi\in\Pi} \mathcal{T}_{\mu_{0}}\mathcal{T}_{\mu_{1}}\dots\mathcal{T}_{\mu_{n-1}} I - n\varepsilon.
\end{align*}
Since this last inequality is true for any $\varepsilon>0$ we get 

\[
\mathcal{T}^nI \geq \inf_{\pi\in\Pi} \mathcal{T}_{\mu_{0}}\mathcal{T}_{\mu_{1}}\dots\mathcal{T}_{\mu_{n-1}} I,
\]
and by definition of $\mathcal{T}$, $\mathcal{T}I \leq \mathcal{T}_{\mu_{n-1}} I$. Using the first relation of the proposition we get 
\[
\mathcal{T}^{n}I \leq \mathcal{T}_{\mu_{0}}\mathcal{T}_{\mu_{1}}\dots\mathcal{T}_{\mu_{n-1}} I.
\]
Finally, $\mathcal{T}^nI = \inf_{\pi\in\Pi} \mathcal{T}_{\mu_{0}}\mathcal{T}_{\mu_{1}}\dots\mathcal{T}_{\mu_{n-1}} I$ for all $I\in \mathbb{C}$ and $n\in \N$. We deduce from the Banach fixed point theorem that $J_{\infty} = \lim_{n\to \infty} \mathcal{T}^n0$ belongs to $\mathbb{C}$ and is the only fixed point of $\mathcal{T}$.

\end{proof}	

\begin{proposition}
There exists $\mu^*\in\mathcal{A}^{\mathcal{R}}$ such that $J_{\infty} = J_{\mu^*} = J'$.	
\end{proposition}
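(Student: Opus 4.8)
The plan is to extract the optimal relaxed strategy $\mu^*$ from the selection argument at each stage of the fixed-point iteration, using the fact that $J_{\infty} = \mathcal{T}J_{\infty}$ (established in the previous proposition) together with a measurable selection theorem. First I would note that, by the previous proposition, $J_{\infty}\in\mathbb{C}$ is the unique fixed point of $\mathcal{T}$, and since $\mathcal{T}$ is contracting on the Banach space $\mathbb{C}$, the iterates $\mathcal{T}^n0 = J_n$ converge to $J_{\infty}$ in sup-norm. The key point is that the infimum defining $(\mathcal{T}J_{\infty})(z) = \inf_{\gamma\in\mathcal{R}} R J_{\infty}(z,\gamma)$ is actually attained, and that a minimizer can be chosen measurably as a function of $z\in\Upsilon$. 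For this I would invoke the compactness of $\mathcal{R}=\mathcal{R}([0,T];U)$ in the $w^*$-topology (Alaoglu's theorem, as recalled in Section \ref{relaxedcontrolsInfDim}), the continuity of $\gamma\mapsto RJ_{\infty}(z,\gamma)$ (which follows from the continuity of $\gamma\mapsto\phi_t^{\gamma}(z)$ from Theorem \ref{theoPapageorgiou}b), the continuity of $\lambda$, $\mathcal{Q}$, $c$, $g$, and dominated convergence using assumption \ref{hyp:lambda}1 to bound $\chi_t^{\gamma}$ and \ref{hyp:cost} together with Theorem \ref{THEOGENERATOR}b to bound the cost integrands), and then a Borel measurable selection theorem for the argmin of a Carathéodory function valued in a compact metric space — precisely the setting of \cite[Proposition 7.33]{BertsekasShreve}. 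This yields $\mu^*\in\mathcal{A}^{\mathcal{R}}$ with $\mathcal{T}_{\mu^*}J_{\infty} = \mathcal{T}J_{\infty} = J_{\infty}$.

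Next I would identify $J_{\infty}$ with the value function $J'$ of the relaxed MDP. One inequality, $J'\leq J_{\infty}$, follows because $\mu^*$ is an admissible stationary strategy: since $\mathcal{T}_{\mu^*}J_{\infty} = J_{\infty}$ and $\mathcal{T}_{\mu^*}$ is itself a contraction on $\mathbb{C}$ with the constant strategy $\pi^* := (\mu^*,\mu^*,\dots)$, iterating gives $J_{\pi^*} = \lim_n \mathcal{T}_{\mu^*}^n 0 = J_{\infty}$ (the limit existing and equalling the fixed point of $\mathcal{T}_{\mu^*}$), hence $J'\leq J_{\mu^*} = J_{\pi^*} = J_{\infty}$. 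For the reverse inequality, for any Markovian policy $\pi = (\mu_n)_{n\in\N}\in\Pi$ we have, using monotonicity of each $\mathcal{T}_{\mu}$ (first relation of the previous proposition) and $0\leq J_{\infty}$ pointwise together with $\mathcal{T}_{\mu_i}J_{\infty}\geq \mathcal{T}J_{\infty} = J_{\infty}$, that $J_{n\pi} = \mathcal{T}_{\mu_0}\cdots\mathcal{T}_{\mu_{n-1}}0 \geq \mathcal{T}_{\mu_0}\cdots\mathcal{T}_{\mu_{n-1}}0$ with the telescoping lower bound $\mathcal{T}_{\mu_0}\cdots\mathcal{T}_{\mu_{n-1}}0 \to$ something $\geq J_{\infty}$ once we control the remainder; more cleanly, $J_{\pi} = \lim_n J_{n\pi} \geq \lim_n (\mathcal{T}^n 0) = J_{\infty}$ using $\mathcal{T}_{\mu_i}\geq\mathcal{T}$ on the relevant functions and monotonicity. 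Taking the infimum over $\pi\in\Pi$ and recalling from \cite[Proposition 9.1]{BertsekasShreve} that history-dependent policies do not help, we get $J^* = \inf_{\pi\in\Pi}J_{\pi} \geq J_{\infty}$, and since $J' = \inf_{\mu\in\mathcal{A}^{\mathcal{R}}} J_{\mu}\geq \inf_{\pi\in\Pi}J_\pi = J^*$, all these quantities coincide: $J' = J^* = J_{\infty} = J_{\mu^*}$.

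I expect the main obstacle to be the measurable selection step: one must check carefully that $(z,\gamma)\mapsto RJ_{\infty}(z,\gamma) = c'(z,\gamma) + (\mathcal{Q}'J_{\infty})(z,\gamma)$ is a Carathéodory function on $\Upsilon\times\mathcal{R}$ — jointly Borel in $z$ and continuous in $\gamma$ — so that \cite[Proposition 7.33]{BertsekasShreve} applies and delivers a Borel map $z\mapsto\mu^*(z)$ attaining the infimum. Continuity in $\gamma$ is where Theorem \ref{theoPapageorgiou}b and the metrizability of the $w^*$-topology on $\mathcal{R}$ (noted in Section \ref{relaxedcontrolsInfDim}) are essential, since they let us argue with sequences $\gamma_k\to\gamma$, pass to the limit in $\phi_t^{\gamma_k}(z)\to\phi_t^{\gamma}(z)$ in $C([0,T],H)$, and then conclude by dominated convergence inside the $t$- and $u$-integrals defining $c'$ and $\mathcal{Q}'J_{\infty}$, using \ref{hyp:lambda}, \ref{hyp:Q}, \ref{hyp:f}2 and the quadratic growth bound \ref{hyp:cost} controlled on the compact trajectory range furnished by Theorem \ref{THEOGENERATOR}b. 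The Borel-in-$z$ part is routine once the $z$-dependence of $\phi^{\gamma}$ and $\chi^{\gamma}$ through the mild formulation is seen to be continuous, hence Borel.
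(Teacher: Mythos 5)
Your proposal follows essentially the same route as the paper: a measurable selection of the argmin of $\gamma\mapsto RJ_{\infty}(z,\gamma)$, justified by the compactness of $\mathcal{R}$ and the continuity supplied by Lemma \ref{CONTINUITYLEMMA}, giving $\mathcal{T}_{\mu^*}J_{\infty}=J_{\infty}$; then the sandwich $J_{\infty}\leq J^*\leq J'\leq J_{\mu^*}$ closed by showing $J_{\mu^*}\leq J_{\infty}$, with $J^*\geq J_{\infty}$ coming from $J_{n\pi}\geq J_n=\mathcal{T}^n 0$.

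One small slip is worth flagging: you close the loop by asserting that $\mathcal{T}_{\mu^*}$ is a contraction \emph{on} $\mathbb{C}$ and invoking its unique fixed point. Since $\mu^*$ is only measurable, $\mathcal{T}_{\mu^*}$ does not in general map continuous functions to continuous functions (already $\mathcal{T}_{\mu^*}0=c'(\cdot,\mu^*(\cdot))$ need not be continuous), so $\mathbb{C}$ is not invariant. The fix is routine: either run the Banach fixed-point argument in the larger Banach space of measurable functions with finite $\|\cdot\|_{B^*}$-norm, where the contraction estimate of Lemma \ref{Tcontracting} still holds for $\mathcal{T}_{\mu^*}$, or avoid contraction altogether as the paper does, using only monotonicity and $J_{\infty}\geq 0$: $J_{\infty}=\mathcal{T}_{\mu^*}^nJ_{\infty}\geq\mathcal{T}_{\mu^*}^n0=J_{n\pi^*}$ for all $n\in\N$, and letting $n\to\infty$ gives $J_{\infty}\geq J_{\mu^*}$, which together with $J_{\mu^*}\geq J'\geq J^*\geq J_{\infty}$ yields all the stated equalities. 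Also, the first sentence of your second paragraph (the ``telescoping'' inequality) is garbled as written, but your ``more cleanly'' version, i.e. $J_{\pi}=\lim_n J_{n\pi}\geq\lim_n\mathcal{T}^n0=J_{\infty}$, is exactly the argument needed and matches the paper's use of $J_n\leq J_{n\pi}$.
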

\begin{proof}
By definition, for every $\pi\in\Pi$, $J_n \leq J_{n\pi}$, so that $J_{\infty} \leq J^*$. Now from the previous proposition, $J_{\infty} = \inf_{\gamma\in\mathcal{R}} LJ_{\infty}(\cdot,\gamma)$, $\mathcal{R}$ is a compact space and $LJ_{\infty}$ is a continuous function. We can thus find a measurable mapping $\mu^* : \Upsilon \to \mathcal{R}$ such that $J_{\infty} = \mathcal{T}_{\mu*}J_{\infty}$. $J_{\infty}\geq 0$ so from the first relation of the previous proposition, for all $n\in\N$, $J_{\infty} = \mathcal{T}_{\mu*}^nJ_{\infty} \geq \mathcal{T}_{\mu*}^n 0 $ and by taking the limit $J_{\infty} \geq J_{\mu^*}$. Since $J_{\mu^*} \geq J^*$ we get $J_{\infty} = J_{\mu^*} = J^*$.
We conclude the proof by remarking that $J^* \leq J' \leq J_{\mu^*}$.
\end{proof}

The next section is devoted to proving that the assumptions of Theorem \ref{TheoContractingMDP} are satisfied for the MDP. 

\subsubsection{Bounding functions and contracting MDP}

The concept of bounding function that we define below will ensure that the operator $\mathcal{T}$ is a contraction. The existence of the space $\mathbb{C}$ of Theorem \ref{TheoContractingMDP} will mostly result from Theorem \ref{theoPapageorgiou} and again from the concept of bounding function.
\begin{definition}\label{boundingFunction}{Bounding functions for a PDMP.}

Let $c$ (resp. $g$) be a running (resp. terminal) cost as in Section \ref{costs}. A measurable function $b:H\rightarrow \mathbb{R}_+$ is called a bounding function for the PDMP if there exist constants $c_c,c_g,c_{\phi}\in \mathbb{R}_+$ such that

\noindent (i) $c(v,u)\leq c_cb(v)$ for all $(v,u)\in H\times Z$,

\noindent (ii) $g(v) \leq c_gb(v)$ for all $v\in H$,

\noindent (iii) $b(\phi_t^{\gamma}(z))\leq c_{\phi}b(v)$ for all $(t,z,\gamma)\in [0,T]\times\Upsilon\times \mathcal{R}$, $z=(v,d,h)$.
\end{definition}

Given a bounding function for the PDMP we can construct one for the MDP with or without relaxed controls, as shown in the next lemma (cf. \cite[Definition 7.1.2 p.195]{BauerleRiederBook}).

\begin{lemma}\label{definitionBgamma} Let $b$ is a bounding function for the PDMP. We keep the notations of Definition \ref{boundingFunction}. Let $\zeta>0$. The function $B_{\zeta} : \Upsilon \longmapsto \mathbb{R}_+$ defined by $B_{\zeta}(z):=b(v)e^{\zeta(T-h)}$ for $z=(v,d,h)$ is an upper bounding function for the MDP. The two inequalities below are satisfied for all $(z,\gamma)\in \Upsilon\times \mathcal{R}$,
\begin{equation}\label{boundingc'}
c'(z,\gamma)\leq B_{\zeta}(z)c_{\phi}\left(\frac{c_c}{\delta}+c_g\right),
\end{equation} 
\begin{equation}\label{Bzeta}
\int_{\Upsilon} B_{\zeta}(y)\mathcal{Q}'(\mathrm{d}y|z,\gamma)\leq B_{\zeta}(z)\; c_{\phi}\;  \frac{M_{\lambda}}{(\zeta+\delta)}\;.
\end{equation}
\end{lemma}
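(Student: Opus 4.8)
The plan is to prove the two estimates directly from the definitions of $c'$, $\mathcal{Q}'$, the survival function $\chi^\gamma$, and the bounding function properties (i)--(iii), keeping careful track of the exponential weight $e^{\zeta(T-h)}$. First I would record the elementary bound on the survival function: since $\lambda_d \geq \delta$ by \ref{hyp:lambda}1, we have $\chi_t^\gamma(z) \leq e^{-\delta t}$ for all $t \in [0, T-h]$, and of course $\chi_t^\gamma(z) \leq 1$. These will be the only facts about $\chi^\gamma$ needed.

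For inequality \eqref{boundingc'}, I would start from the formula \eqref{relaxedCost} for $c'(z,\gamma)$. Using property (i) of the bounding function to bound $c(\phi_s^\gamma(z),u) \leq c_c b(\phi_s^\gamma(z))$, then property (iii) to bound $b(\phi_s^\gamma(z)) \leq c_\phi b(v)$, and $\chi_s^\gamma(z) \leq e^{-\delta s}$, the running-cost term is bounded by $c_c c_\phi b(v) \int_0^{T-h} e^{-\delta s}\,\mathrm{d}s \leq c_c c_\phi b(v)/\delta$. Similarly the terminal term is bounded, via properties (ii) and (iii) and $\chi_{T-h}^\gamma(z) \leq 1$, by $c_g c_\phi b(v)$. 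Adding, $c'(z,\gamma) \leq c_\phi b(v)\left(\frac{c_c}{\delta} + c_g\right)$, and since $B_\zeta(z) = b(v)e^{\zeta(T-h)} \geq b(v)$, this gives \eqref{boundingc'}.

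For inequality \eqref{Bzeta}, I would write out $\int_\Upsilon B_\zeta(y)\,\mathcal{Q}'(\mathrm{d}y|z,\gamma)$ using the explicit expression for $\mathcal{Q}'w$ with $w = B_\zeta$ (the displayed formula before Theorem \ref{TheoContractingMDP}), noting that the jump time $t$ produces a new post-jump state whose last-jump-time coordinate is $h+t$, so that $B_\zeta$ evaluated there equals $b(\phi_t^\gamma(z))\,e^{\zeta(T-h-t)}$. The $D$-integral of $\mathcal{Q}(\mathrm{d}r|\cdots)$ is $1$, and the $Z$-integral of $\lambda_d(\phi_t^\gamma(z),u)\gamma(t)(\mathrm{d}u)$ is $\lambda_d(\phi_t^\gamma(z),\gamma(t)) \leq M_\lambda$ by \ref{hyp:lambda}1. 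Hence the integral is bounded by
\[
M_\lambda \int_0^{T-h} \chi_t^\gamma(z)\, b(\phi_t^\gamma(z))\, e^{\zeta(T-h-t)}\,\mathrm{d}t \leq M_\lambda\, c_\phi\, b(v)\, e^{\zeta(T-h)} \int_0^{T-h} e^{-(\delta+\zeta)t}\,\mathrm{d}t,
\]
using property (iii) and $\chi_t^\gamma(z) \leq e^{-\delta t}$. Since $\int_0^{T-h} e^{-(\delta+\zeta)t}\,\mathrm{d}t \leq \frac{1}{\zeta+\delta}$ and $b(v)e^{\zeta(T-h)} = B_\zeta(z)$, this yields \eqref{Bzeta}.

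These computations are routine; the only mild subtlety is bookkeeping the exponential weight $e^{\zeta(T-h)}$ correctly through the kernel $\mathcal{Q}'$ — the new state after a jump at time $h+t$ carries weight $e^{\zeta(T-(h+t))}$, and it is precisely the interplay of this factor with the $e^{-\delta t}$ decay of $\chi_t^\gamma$ that produces the gain factor $\frac{M_\lambda}{\zeta+\delta}$, which can be made less than $1$ by choosing $\zeta$ large — this is what will later make $\mathcal{T}$ a contraction. I do not expect any serious obstacle; the statement that $B_\zeta$ is an "upper bounding function for the MDP" is essentially just the conjunction of \eqref{boundingc'} and \eqref{Bzeta} together with measurability and positivity of $B_\zeta$, which are immediate.
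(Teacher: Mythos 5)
Your proposal is correct and follows essentially the same route as the paper: bound the survival function by $e^{-\delta s}$ via \ref{hyp:lambda}1, use properties (i)--(iii) of the bounding function together with $\lambda_d\leq M_\lambda$, and track the weight $e^{\zeta(T-h-s)}$ through $\mathcal{Q}'$ to produce the factor $\frac{M_\lambda}{\zeta+\delta}$. The only (immaterial) difference is minor bookkeeping in \eqref{boundingc'}, where the paper keeps $\chi_{T-h}^{\gamma}(z)\leq e^{-\delta(T-h)}$ and the factor $e^{-\zeta(T-h)}$ before relaxing to the stated bound, while you discard these factors one step earlier.
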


\begin{proof} Take $(z,\gamma)\in \Upsilon\times \mathcal{R}$ , $z=(v,d,h)$. On the one hand from (\ref{relaxedCost}) and Definition \ref{boundingFunction} we obtain 
\begin{eqnarray*}
c'(z,\gamma) &\leq& \int_0^{T-h} e^{-\delta s}c_cc_{\phi}b(v)\mathrm{d}s + e^{-\delta(T-h)}c_gc_{\phi}b(v)\\ 
&\leq& B_{\zeta}(z)e^{-\zeta(T-h)}c_{\phi}\left(c_c\frac{1-e^{-\delta(T-h)}}{\delta} + e^{-\delta(T-h)}c_g\right),
\end{eqnarray*}
which immediately implies (\ref{boundingc'}). On the other hand 
%& \leq B_{\gamma}(z)c_{\phi}\left(\frac{c_c}{\delta}+c_g\right),\\ 
\begin{align*}
\int_{\Upsilon} B_{\zeta}(y)\mathcal{Q}'(\mathrm{d}y|z,\gamma) & = \int_0^{T-h} \chi_s^{\gamma}(z)b(\phi_s^{\gamma}(z))e^{\zeta(T-h-s)}  \int_Z\lambda_d(\phi_s^{\gamma}(z),u)\mathcal{Q}(D|\phi_s^{\gamma}(z),u)\gamma_s(\mathrm{d}u)\mathrm{d}s\\
 & \leq  e^{\zeta(T-h)}b(v)c_{\phi}M_{\lambda} e^{-\zeta \tau}\int_0^{T-h} e^{-\delta s}e^{-\zeta s}\mathrm{d}s\\
 & = B_{\zeta}(z) c_{\phi}\frac{M_{\lambda}}{\zeta + \delta}\left(1-e^{-(\zeta+\delta)(T-h)}\right)
 \end{align*}
 which implies (\ref{Bzeta}).
 
\end{proof}

\bigskip

Let $b$ be a bounding function for the PDMP. Consider $\zeta^*$ such that $\,C:= c_{\phi}\frac{M_{\lambda}}{\zeta^* + \delta}\; < 1$. Denote by $B^*$ the associated bounding function for the MDP. We introduce the Banach space

\begin{equation}\label{bornepourB}
{\cal L}^*:=\{v : \Upsilon \rightarrow \mathbb{R} \text{ continuous } ; \, \, ||v||_* := \sup_{z\in\Upsilon} \frac{|v(z)|}{|B^*(z)|} < \infty \}\, .
\end{equation}

%\subsubsection{Using the bounding functions $B_\zeta$ to study the operator $\mathcal{T}$.}\label{BoudingFunctionSection}

The following two lemmas give an estimate on the expected cost of the MDP that justifies manipulations of infinite sums.

\begin{lemma}\label{bBound} 
The inequality $\mathbb{E}_{z}^{\gamma}\left[B^*(Z'_k)\right] \leq C^k B^*(z)$ holds for any $(z,\gamma,k)\in \Upsilon\times \mathcal{R}\times \mathbb{N}$.
\end{lemma}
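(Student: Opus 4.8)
The plan is to prove the bound $\mathbb{E}_{z}^{\gamma}\left[B^*(Z'_k)\right] \leq C^k B^*(z)$ by induction on $k$, using the single-stage estimate (\ref{Bzeta}) of Lemma \ref{definitionBgamma} together with the Markov property of the MDP $(Z'_n)_{n\in\mathbb{N}}$.

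The base case $k=0$ is trivial: $Z'_0 = z$ almost surely, so $\mathbb{E}_{z}^{\gamma}\left[B^*(Z'_0)\right] = B^*(z) = C^0 B^*(z)$. For the inductive step, suppose the inequality holds for some $k\in\mathbb{N}$ and for all $z\in\Upsilon$. First I would observe that, by definition of $C$ and of $B^* = B_{\zeta^*}$, inequality (\ref{Bzeta}) reads exactly $\int_{\Upsilon} B^*(y)\,\mathcal{Q}'(\mathrm{d}y|z,\gamma) \leq C\, B^*(z)$ for all $(z,\gamma)\in\Upsilon\times\mathcal{R}$ (here one should also note that $B^*$ vanishes, by convention, or at least contributes nothing, at the cemetery state $\Delta_{\infty}$, and that $\mathcal{Q}'$ restricted to $\Upsilon$ has mass $\chi_{T-h}^{\gamma}(z)\leq 1$; the computation in the proof of Lemma \ref{definitionBgamma} already accounts for this). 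Then, conditioning on $Z'_k$ and using the Markov property of the MDP under the stationary relaxed control rule $\gamma$, one writes
\[
\mathbb{E}_{z}^{\gamma}\left[B^*(Z'_{k+1})\right] = \mathbb{E}_{z}^{\gamma}\left[\,\mathbb{E}_{z}^{\gamma}\!\left[B^*(Z'_{k+1})\mid Z'_k\right]\right] = \mathbb{E}_{z}^{\gamma}\left[\int_{\Upsilon} B^*(y)\,\mathcal{Q}'(\mathrm{d}y\mid Z'_k,\gamma)\right] \leq \mathbb{E}_{z}^{\gamma}\left[C\, B^*(Z'_k)\right],
\]
where the last inequality is (\ref{Bzeta}) applied pointwise at the random point $Z'_k$. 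Pulling the constant $C$ out and invoking the induction hypothesis gives $\mathbb{E}_{z}^{\gamma}\left[B^*(Z'_{k+1})\right] \leq C\cdot C^k B^*(z) = C^{k+1} B^*(z)$, which closes the induction.

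The only genuine subtlety — and the step I would be most careful about — is the handling of the cemetery state $\Delta_{\infty}$ in the conditional expectation: once the MDP has been absorbed at $\Delta_{\infty}$, all subsequent contributions must be zero, so one needs either to extend $B^*$ by $B^*(\Delta_{\infty}):=0$ or to note that the kernel $\mathcal{Q}'(\cdot\mid\Delta_{\infty},\gamma)=\delta_{\Delta_{\infty}}$ together with this convention makes the inequality $\int B^*(y)\mathcal{Q}'(\mathrm{d}y\mid z,\gamma)\leq C\,B^*(z)$ hold for $z=\Delta_{\infty}$ as well (both sides being $0$). With that convention in place the Markov property applies uniformly on $\Upsilon\cup\{\Delta_{\infty}\}$ and the induction goes through without further measurability concerns, since the state and control spaces are Borel (cf. the discussion after the definition of the MDP and \cite{BertsekasShreve}). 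Everything else is a routine iteration of the one-stage bound.
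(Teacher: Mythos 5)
Your proof is correct and follows essentially the same route as the paper: induction on $k$, with the inductive step handled by conditioning on $Z'_k$ and applying the one-stage estimate (\ref{Bzeta}) from Lemma \ref{definitionBgamma}. Your extra care with the cemetery state $\Delta_{\infty}$ (setting $B^*(\Delta_{\infty})=0$) is a reasonable clarification of a point the paper leaves implicit, but it does not change the argument.
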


\begin{proof}
We proceed by induction on $k$. Let $z\in\Upsilon$. The desired inequality holds for $k=0$ since $\mathbb{E}_{z}^{\gamma}\left[B^*(Z'_0)\right]=B^*(z)$. Suppose now that it holds for $k\in \mathbb{N}$. Then   
\begin{eqnarray*}
\mathbb{E}_{z}^{\gamma}\left[B^*(Z'_{k+1})\right] &=& \mathbb{E}_{z}^{\gamma}\left[ \mathbb{E}_{z}^{\gamma}\left[ B^*(Z'_{k+1})|Z'_k\right] \right] \\
 & =&  \mathbb{E}_{z}^{\gamma}\left[ \int_{\Upsilon} B^*(y) \mathcal{Q}'(\mathrm{d}y|Z'_k,\gamma )\right]\\
 & =& \mathbb{E}_{z}^{\gamma}\left[ B^*(Z'_k)\frac{\int_{\Upsilon} B^*(y) \mathcal{Q}'(\mathrm{d}y|Z'_k,\gamma )}{B^*(Z'_k)}\right].
\end{eqnarray*} 
Using (\ref{Bzeta}) and the definition of $C$, we conclude that $\mathbb{E}_{z}^{\gamma}\left[B^*(Z'_{k+1})\right]\leq C \mathbb{E}_{z}^{\gamma}\left[ B^*(Z'_k)\right]$ and by the assumption on $k$ $\mathbb{E}_{z}^{\gamma}\left[B^*(Z'_{k+1})\right]\leq C^{k+1}B^*(z)$.
\end{proof}

\begin{lemma}
There exists $\kappa>0$ such that for any $(z,\mu)\in \Upsilon \times \mathcal{A}^\mathcal{R}$,
\begin{equation*}
\mathbb{E}_{z}^{\mu}\left[ \sum_{k=n}^{\infty} c'(Z'_k,\mu(Z'_k)) \right] \leq \kappa\, \frac{C^n}{1-C}\, B^*(z).
\end{equation*}
\end{lemma}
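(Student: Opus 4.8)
The plan is to read this off from the two estimates of Lemma~\ref{definitionBgamma} together with the iterated bound of Lemma~\ref{bBound}, so there is almost nothing to do beyond bookkeeping. I would first fix the constant $\kappa := c_{\phi}\left(\frac{c_c}{\delta}+c_g\right)$, which is exactly the constant multiplying $B_{\zeta}(z)$ in \eqref{boundingc'} once we take $\zeta=\zeta^*$ and $B_{\zeta}=B^*$; note it depends only on the bounding-function data $c_{\phi},c_c,c_g,\delta$ and not on $z$ or $\mu$. Applying \eqref{boundingc'} with $\gamma=\mu(z)$ gives the pointwise bound $c'(z,\mu(z))\leq\kappa\,B^*(z)$ for every $z\in\Upsilon$; on the cemetery state this is trivial since $c'(\Delta_{\infty},\cdot)=0$ (with the convention $B^*(\Delta_{\infty}):=0$). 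Hence, along the MDP trajectory, $c'(Z'_k,\mu(Z'_k))\leq\kappa\,B^*(Z'_k)$ almost surely for every $k$.

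Next I would take expectations and sum. All summands are nonnegative, so Tonelli's theorem lets me interchange $\mathbb{E}_z^{\mu}$ and $\sum_{k\geq n}$, giving
\[
\mathbb{E}_{z}^{\mu}\left[\sum_{k=n}^{\infty}c'(Z'_k,\mu(Z'_k))\right]\leq\kappa\sum_{k=n}^{\infty}\mathbb{E}_{z}^{\mu}\left[B^*(Z'_k)\right].
\]
The induction proving Lemma~\ref{bBound} uses only the kernel estimate \eqref{Bzeta}, which is valid for every pair $(z,\gamma)\in\Upsilon\times\mathcal{R}$; applying it at each stage with $\gamma=\mu(Z'_k)$, the identical argument yields $\mathbb{E}_{z}^{\mu}\left[B^*(Z'_k)\right]\leq C^k B^*(z)$ for all $k\in\mathbb{N}$. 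Since $C<1$ by the choice of $\zeta^*$, summing the geometric series gives
\[
\mathbb{E}_{z}^{\mu}\left[\sum_{k=n}^{\infty}c'(Z'_k,\mu(Z'_k))\right]\leq\kappa\sum_{k=n}^{\infty}C^k B^*(z)=\kappa\,\frac{C^n}{1-C}\,B^*(z),
\]
which is the desired estimate.

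I do not expect any real obstacle; the statement is a direct consequence of the two previous lemmas. The only points needing a line of care are: that Lemma~\ref{bBound}, stated for a fixed relaxed rule, carries over verbatim when the rule used at stage $k$ is $\mu(Z'_k)$ — which holds because \eqref{Bzeta} is uniform in $\gamma$; that the cemetery state $\Delta_{\infty}$ contributes nothing since $c'$ vanishes there; and that the interchange of the infinite sum and the expectation is legitimate, which is immediate from nonnegativity (monotone convergence).
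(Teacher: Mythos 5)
Your proof is correct and follows essentially the same route as the paper: bound $c'(Z'_k,\mu(Z'_k))$ pointwise by $c_{\phi}\left(\frac{c_c}{\delta}+c_g\right)B^*(Z'_k)$ via the bounding-function estimate, invoke Lemma~\ref{bBound} to get $\mathbb{E}_z^{\mu}[B^*(Z'_k)]\leq C^kB^*(z)$, and sum the geometric series. Your extra remarks (Tonelli, the cemetery state, and the fact that the induction behind Lemma~\ref{bBound} applies stage-by-stage with $\gamma=\mu(Z'_k)$) only make explicit details the paper leaves implicit.
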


\begin{proof}
The results follows from Lemma \ref{bBound} and from the fact that $$c'(Z'_k,\mu(Z'_k)) \leq B^*(Z_k)c_{\phi}\left(\frac{c_c}{\delta}+c_g\right)$$ for any $k\in \mathbb{N}$.
\end{proof}
We now state the result on the operator $\mathcal{T}$.

\begin{lemma}\label{Tcontracting}
$\mathcal {T}$ is a contraction on ${\cal L}^*$: for any $(v,w)\in {{\cal L}^*}\times {{\cal L}^*}$,
\begin{equation*}
||\mathcal{T}v-\mathcal{T}w||_{B^*} \, \leq \, C \, ||v-w||_{B^*},
\end{equation*}
where $C=c_{\phi}\frac{M_{\lambda}}{\zeta^* + \delta}$.
\end{lemma}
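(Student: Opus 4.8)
The plan is to estimate $|\mathcal{T}v(z)-\mathcal{T}w(z)|$ pointwise and then divide by $B^*(z)$. The starting point is the elementary inequality for infima: for any family of functions indexed by $\gamma\in\mathcal{R}$,
\[
\Big|\inf_{\gamma\in\mathcal{R}} Rv(z,\gamma) - \inf_{\gamma\in\mathcal{R}} Rw(z,\gamma)\Big| \le \sup_{\gamma\in\mathcal{R}} \big|Rv(z,\gamma) - Rw(z,\gamma)\big|.
\]
Since $Rv(z,\gamma) = c'(z,\gamma) + (\mathcal{Q}'v)(z,\gamma)$ and the cost term $c'(z,\gamma)$ cancels, this reduces to controlling $\sup_{\gamma}|(\mathcal{Q}'v)(z,\gamma) - (\mathcal{Q}'w)(z,\gamma)| = \sup_{\gamma}|(\mathcal{Q}'(v-w))(z,\gamma)|$. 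Here I would use that $\mathcal{Q}'$ is a (sub)stochastic kernel with nonnegative density, so $|(\mathcal{Q}'(v-w))(z,\gamma)| \le \int_{\Upsilon} |v(y)-w(y)|\,\mathcal{Q}'(\mathrm{d}y|z,\gamma)$.

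Next I would bound $|v(y)-w(y)| \le \|v-w\|_{B^*}\, B^*(y)$ by definition of the $\|\cdot\|_{B^*}$ norm, giving
\[
\big|(\mathcal{Q}'(v-w))(z,\gamma)\big| \le \|v-w\|_{B^*} \int_{\Upsilon} B^*(y)\,\mathcal{Q}'(\mathrm{d}y|z,\gamma).
\]
Now I invoke the key estimate (\ref{Bzeta}) from Lemma \ref{definitionBgamma}, applied with the bounding function $B^*$ associated to $\zeta^*$: $\int_{\Upsilon} B^*(y)\,\mathcal{Q}'(\mathrm{d}y|z,\gamma) \le B^*(z)\, c_{\phi}\, \frac{M_\lambda}{\zeta^* + \delta} = C\, B^*(z)$. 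Combining, $|\mathcal{T}v(z) - \mathcal{T}w(z)| \le C\, \|v-w\|_{B^*}\, B^*(z)$ for every $z\in\Upsilon$; dividing by $B^*(z)$ and taking the supremum over $z$ yields $\|\mathcal{T}v - \mathcal{T}w\|_{B^*} \le C\,\|v-w\|_{B^*}$, which is the claim since $C<1$ by the choice of $\zeta^*$.

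There is essentially no serious obstacle here; the content is entirely front-loaded into Lemma \ref{definitionBgamma}. The only points requiring a little care are: (i) justifying the interchange $\sup_\gamma|\inf - \inf| \le \sup_\gamma|\cdot|$, which is standard; (ii) checking that $\mathcal{T}v, \mathcal{T}w \in \mathcal{L}^*$ so that the norm on the left-hand side makes sense — but this is part of the ambient framework already being assumed, and in any case the pointwise bound just derived (together with a symmetric argument comparing $\mathcal{T}v$ to $\mathcal{T}0$ and the bound (\ref{boundingc'}) on $c'$) shows $\mathcal{T}$ maps $\mathcal{L}^*$ into itself; and (iii) making sure the inequality $|v(y)-w(y)| \le \|v-w\|_{B^*} B^*(y)$ is applied under the integral sign, which is legitimate since all integrands are nonnegative and $\mathcal{Q}'$ is a finite measure. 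I expect the write-up to be only a few lines.
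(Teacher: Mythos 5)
Your proof is correct and follows essentially the same route as the paper: the cost term $c'$ cancels, the difference is bounded by the kernel integral of $B^*$, and the key estimate is exactly (\ref{Bzeta}) with $\zeta=\zeta^*$ — the paper simply re-derives that estimate inline (unrolling $\mathcal{Q}'$ and using $\chi_s^{\gamma}(z)\leq e^{-\delta s}$, $\lambda\leq M_\lambda$, $b(\phi_s^{\gamma}(z))\leq c_{\phi}b(v)$) instead of citing Lemma \ref{definitionBgamma}. One small caveat on your aside (ii): the pointwise bound yields finiteness of $\|\mathcal{T}v\|_*$ but not the continuity needed for $\mathcal{T}v\in\mathcal{L}^*$, a point the paper likewise does not settle inside this lemma but defers to the continuity results of the following section (Lemma \ref{CONTINUITYLEMMA}).
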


\begin{proof}
We prove here the contraction property. The fact $\mathcal{T}: \mathcal{L}^*\to\mathcal{L}^*$ is less straightforward and is addressed in the next section.
Let $z:=(v,d,h)\in\Upsilon$. Let us recall that for functions $f,g:\mathcal{R}\rightarrow \mathbb{R}$
\begin{equation*}
\sup_{\gamma\in \mathcal{R}} f(\gamma) - \sup_{\gamma\in \mathcal{R}} g(\gamma) \leq  \sup_{\gamma\in \mathcal{R}} \left( f(\gamma) -g(\gamma)\right).
\end{equation*}
Moreover since $\inf_{\gamma\in \mathcal{R}} f(\gamma) - \inf_{\gamma\in \mathcal{R}} g(\gamma) = \sup_{\gamma\in \mathcal{R}} (-g(\gamma)) - \sup_{\gamma\in \mathcal{R}} (-f(\gamma))$, we have 
\begin{equation*}
\mathcal{T}v\, (z)-\mathcal{T}w\, (z)  \leq \sup_{\gamma\in \mathcal{R}} \int_0^{T-h}  \chi_s^{\gamma}(z)\int_Z\lambda_{d}(\phi_s^{\gamma}(z),u){\cal I}(u,s)\, \gamma(s)(\mathrm{d}u)\mathrm{d}s,
\end{equation*}
where
\begin{equation*}
 {\cal I}(u,s):=\int_{D} \Big(v(\phi_s^{\gamma}(z),r,h+s)-w(\phi_s^{\gamma}(z),r,h+s)\Big)
\mathcal{Q}(\mathrm{d}r|\phi_s^{\gamma}(z),d,u),
\end{equation*}
%\int_{D} \left(v(r,0,\tau+h+s,\phi_s^{\mu}(z))-w(r,0,\tau+h+s,\phi_s^{\mu}(z))\right)
%\mathcal{Q}(\mathrm{d}r|\phi_s^{\mu}(z),d,u).
%\mu_s(\mathrm{d}u)\mathrm{d}s
so that  
\begin{equation*}
||\mathcal{T}v-\mathcal{T}w||_{B^*}  \leq \sup_{(z,\gamma)\in \Upsilon\times\mathcal{R}} \int_0^{T-h} \chi_s^{\gamma}(z)\int_Z\lambda_{d}(\phi_s^{\gamma}(z),u){\cal J}(s,u)\gamma(s)(\mathrm{d}u)\mathrm{d}s
\end{equation*}
where 
\begin{equation*}
{\cal J}(s,u):=\int_{D} \frac{B^* (\phi_s^{\gamma}(z),r,h+s)}{B^* (z)}||v-w||_{B^*} \mathcal{Q}(\mathrm{d}r|\phi_s^{\gamma}(z),d,u)
\end{equation*}
We then conclude that 
\begin{align*}
||\mathcal{T}v-\mathcal{T}w||_{B^*} & \leq  \sup_{(z,\gamma)\in \Upsilon\times\mathcal{R}} \int_0^{T-h} e^{-\delta s}M_{\lambda}c_{\phi}e^{-\zeta^*s}\mathrm{d}s\, ||v-w||_{B^*}\\
& \leq M_{\lambda}c_{\phi}\, ||v-w||_{B^*} \int_0^{T-h} e^{-(\delta+\zeta^*) s}\mathrm{d}s\\
& \leq  C ||v-w||_{B^*}.
\end{align*}
\end{proof}

\subsubsection{Continuity properties}\label{RelaxTheoSection}

Here we prove that the trajectories of the relaxed PDMP are continuous w.r.t. the control and that the operator $R$ transforms continuous functions in continuous functions.

\begin{lemma}\label{ContinuityTheo} Assume that \ref{hyp:L} and \ref{hyp:f} are satisfied.
Then the mapping
\begin{equation*}
\phi : (z,\gamma)\in\Upsilon\times \mathcal{R} \rightarrow \phi_\cdot^{\gamma}(z) =S(0)v + \int_0^{\cdot}\int_Z S(\cdot-s) f_d(\phi_s^{\gamma}(z),u)\gamma(s)(\mathrm{d}u)\mathrm{d}s
\end{equation*}
is continuous from $\Upsilon\times \mathcal{R}$ in $C([0,T];H)$.
\end{lemma}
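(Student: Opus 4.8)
## Proof proposal for Lemma \ref{ContinuityTheo}

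The plan is to prove joint continuity by reducing it to two separate facts, relying heavily on the metrizability of $\mathcal{R}$ established above so that sequences suffice. First I would take a sequence $(z_n,\gamma_n)\to(z,\gamma)$ in $\Upsilon\times\mathcal{R}$, write $z_n=(v_n,d_n,h_n)$, $z=(v,d,h)$, and observe that since $D$ is discrete we may assume $d_n=d$ for all large $n$; the component $h$ plays no role in $\phi_\cdot^{\gamma}(z)$, so the task is to show $\phi_\cdot^{\gamma_n}(v_n,d)\to\phi_\cdot^{\gamma}(v,d)$ in $C([0,T];H)$. I would split this via the triangle inequality as
\[
\|\phi_\cdot^{\gamma_n}(v_n,d)-\phi_\cdot^{\gamma}(v,d)\|_{C([0,T];H)}\leq \|\phi_\cdot^{\gamma_n}(v_n,d)-\phi_\cdot^{\gamma_n}(v,d)\|_{C([0,T];H)}+\|\phi_\cdot^{\gamma_n}(v,d)-\phi_\cdot^{\gamma}(v,d)\|_{C([0,T];H)}.
\]
The second term tends to $0$ by Theorem \ref{theoPapageorgiou}b), which gives continuity of the solution map $\gamma\mapsto\phi_\cdot^{\gamma}(v,d)$ for fixed initial datum. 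The first term is handled by a Gronwall estimate uniform in the relaxed control: from the mild formulation (\ref{relaxedSurvival}), using \ref{hyp:f}1 (Lipschitz continuity of $f_d$ uniform in $Z$, hence of $\bar f_d(\cdot,\gamma)$ with the same constant $l_f$) and the bound $M_S$ on the semigroup from \ref{hyp:L}4,
\[
\|\phi_t^{\gamma_n}(v,d)-\phi_t^{\gamma_n}(v',d)\|_H\leq M_S\|v-v'\|_H + M_S l_f\int_0^t \|\phi_s^{\gamma_n}(v,d)-\phi_s^{\gamma_n}(v',d)\|_H\,\mathrm{d}s,
\]
so Gronwall gives $\sup_{t\in[0,T]}\|\phi_t^{\gamma_n}(v,d)-\phi_t^{\gamma_n}(v',d)\|_H\leq M_S e^{M_S l_f T}\|v-v'\|_H$, a Lipschitz bound independent of $n$; applying it with $v'=v_n\to v$ kills the first term.

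If one prefers to work under \ref{hyp:fprim} instead of \ref{hyp:f}, the same scheme applies: Theorem \ref{theoPapageorgiou} is stated under either hypothesis, and the continuous dependence on the initial datum then follows from monotonicity of $-f_d$ together with the coercivity \ref{hyp:L}3, via the standard energy estimate for the difference of two mild (equivalently, variational) solutions, which again yields a bound on $\sup_{t}\|\phi_t^{\gamma}(v,d)-\phi_t^{\gamma}(v',d)\|_H$ in terms of $\|v-v'\|_H$ uniformly in $\gamma$. Either way the key point is that the estimate on the dependence on the initial condition is uniform over all relaxed controls, which is exactly what lets us combine it with the one-variable continuity from Theorem \ref{theoPapageorgiou}.

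The main obstacle is the second term, i.e. continuity in $\gamma$ for fixed initial datum. I intend to invoke Theorem \ref{theoPapageorgiou}b) as a black box, but it is worth recalling why it is nontrivial and where its hypotheses enter: the convergence $\gamma_n\to\gamma$ is only in the $w^*$-sense of $L^\infty((0,T),M(Z))$ against $L^1((0,T),C(Z))$, so $\int_Z f_d(\phi_s^{\gamma_n}(z),u)\gamma_n(s)(\mathrm{d}u)$ does not converge pointwise in any naive way; one needs \ref{hyp:f}2 (continuity of $f_d$ into $H_w$) and the compactness of the semigroup $S(t)$ from \ref{hyp:L}4 to upgrade weak convergence of the nonlinear term to strong convergence of $\phi_\cdot^{\gamma_n}(z)$ in $C([0,T];H)$, together with compactness of $Z$ to make the relaxed controls range in a compact set. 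Since all of this is already packaged in Theorem \ref{theoPapageorgiou}, the present lemma reduces, modulo the elementary uniform-in-$\gamma$ Gronwall argument above, to that theorem; I would conclude by noting that the sequential continuity just established is genuine continuity because $\Upsilon\times\mathcal{R}$ is metrizable, $\mathcal{R}$ being metrizable as shown in Section \ref{relaxedcontrolsInfDim} and $\Upsilon$ being a product of metric spaces.
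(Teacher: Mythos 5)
Your proof is correct, but it is organized differently from the paper's. You decouple the two variables by a triangle inequality: a Gronwall estimate that is uniform in the relaxed control (valid because $\bar f_d(\cdot,\gamma):=\int_Z f_d(\cdot,u)\gamma(\mathrm{d}u)$ is $l_f$-Lipschitz for every probability measure $\gamma$, by \ref{hyp:f}1) gives Lipschitz dependence on the initial datum, and the dependence on the control for a \emph{fixed} initial datum is delegated wholesale to Theorem \ref{theoPapageorgiou}b), sequential arguments sufficing since $\mathcal{R}$ is metrizable, as you note. The paper instead runs a single Gronwall estimate on the joint difference $\phi_t^{\gamma_n}(z_n)-\phi_t^{\gamma}(z)$, which leaves the remainder $\ell_n(t)=\int_0^t\int_Z S(t-s)f_d(\phi_s^{\gamma}(z),u)\,\bigl[\gamma_n(s)(\mathrm{d}u)-\gamma(s)(\mathrm{d}u)\bigr]\mathrm{d}s$, and then proves by hand that $\|\ell_n\|_H\to 0$ uniformly on $[0,T]$: relative compactness of the associated sequence in $C([0,T],H)$ (the argument of \cite[Theorem 3.1]{Papageorgiou}, which is where compactness of $S(t)$ enters) together with identification of the limit through the $w^*$-convergence $\gamma_n\to\gamma$ tested against integrands in $L^1(C(Z))$. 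In other words, the paper re-derives, for the frozen trajectory $\phi^{\gamma}(z)$, exactly the ingredient you import as a black box; your discussion of why part b) is nontrivial is essentially that computation. Your route is shorter and cleaner, at the price of leaning entirely on the packaged statement of Theorem \ref{theoPapageorgiou}b) (stated for a fixed initial datum, formally with $v\in V$ in (\ref{relaxedPDE}), while the lemma allows $v\in H$ — the same level of rigor at which the paper itself invokes \cite{Papageorgiou}); the paper's route makes explicit where the compact semigroup and the $w^*$-topology are used. The uniform-in-$\gamma$ Lipschitz estimate in your first step is valid exactly as you state it, so both arguments reach the same conclusion.
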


\begin{proof}
This proof is based on the result of Theorem \ref{theoPapageorgiou}. Here we add the joined continuity on $\Upsilon\times \mathcal{R}$ whereas the continuity is just on $\mathcal{R}$ in \cite{Papageorgiou}.
Let $t\in[0,T]$ and let $(z,\gamma) \in \Upsilon\times \mathcal{R}$. Assume that $(z_n,\gamma_n)\rightarrow (z,\gamma)$. Since $D$ is a finite set, we take the discrete topology on it and if we denote by $z^n=(v_n,d_n,h_n)$ and $z=(v,d,h)$, we have the equality $d_n=d$ for $n$ large enough. So for $n$ large enough we have
\begin{align*}
 \phi_t^{\gamma_n}(z_n) - \phi_t^{\gamma}(z) & = S(t)v_n-S(t)v + \int_0^t\int_Z S(t-s)f_d(\phi_t^{\gamma_n}(z_n),u)\gamma_n(s)(\mathrm{d}u)\mathrm{d}s \\
 & \quad - \int_0^t\int_Z S(t-s)f_d(\phi_t^{\gamma}(z),u)\gamma(s)(\mathrm{}du)\mathrm{d}s\\
 & = S(t)v_n-S(t)v \\
 & \quad + \int_0^t \int_Z S(t-s)[f_d(\phi_t^{\gamma_n}(z_n),u)\gamma_n(s)(\mathrm{d}u)-f_d(\phi_t^{\gamma}(z),u)\gamma_n(s)(\mathrm{d}u)]\mathrm{d}s\\
 & \quad + \int_0^t \int_Z S(t-s)[f_d(\phi_t^{\gamma}(z),u)\gamma_n(s)(\mathrm{d}u)-f_d(\phi_t^{\gamma}(z),u)\gamma(s)(\mathrm{d}u)]\mathrm{d}s.
 \end{align*}
From\ref{hyp:f}1. we get
 \begin{equation*}
 ||\phi_t^{\gamma_n}(z_n) - \phi_t^{\gamma}(z)||_H  \leq M_S||v_n-v||_H + M_Sl_f \int_0^t||\phi_s^{\gamma_n}(z_n) - \phi_s^{\gamma}(z)||_H\mathrm{d}s + ||\ell_n(t)||_H 
 \end{equation*}
 where $\ell_n(t) := \int_0^t \int_Z S(t-s)[f_d(\phi_t^{\gamma}(z),u)\gamma_n(s)(\mathrm{d}u)-f_d(\phi_t^{\gamma}(z),u)\gamma(s)(\mathrm{d}u)]\mathrm{d}s.$ By the Gronwall lemma we obtain a constant $C>0$ such that
 \[
  ||\phi_t^{\gamma_n}(z_n) - \phi_t^{\gamma}(z)||_H  \leq C(||v_n-v||_H+||\ell_n(t)||_H).
 \]
Since $\lim_{n\rightarrow +\infty}||v_n-v||_H=0$, the proof is complete if we show that the sequence of functions $(||\ell_n||_H)$ uniformly converges to 0. 

Let us denote by $x_n(t):= 	\int_0^t \int_Z (h, S(t-s)f_d(\phi_t^{\gamma}(z),u)))_H\gamma_n(s)(\mathrm{d}u)\mathrm{d}s $. Using the same argument as the proof of \cite[Theorem 3.1]{Papageorgiou}, there is no difficulty in proving that $(x_n)_{n\in\N}$ is compact in $C([0,T],H)$ so that, passing to a subsequence if necessary, we may assume that $x_n\to x$ in $C([0,T],H)$. Now let $h\in H$. 

\begin{align*}
(h,\ell_n(t))_H &= 	\int_0^t \int_Z (h, S(t-s)f_d(\phi_t^{\gamma}(z),u)))_H\gamma_n(s)(\mathrm{d}u)\mathrm{d}s \\
& \quad - \int_0^t \int_Z (h, S(t-s)f_d(\phi_t^{\gamma}(z),u)))_H\gamma(s)(\mathrm{d}u)\mathrm{d}s \xrightarrow[n\rightarrow \infty ]{} 0,
\end{align*}
since $(t,u) \to ( h, S(t-s)f_d(\phi_t^{\gamma}(z),u)))_H \in L^1(C(Z))$ and $\gamma_n\to \gamma$ weakly* in $L^{\infty}(M(Z)) = [L^1(C(Z))]^*$. Thus, $x(t) =  \int_0^t \int_Z S(t-s)f_d(\phi_t^{\gamma}(z),u)\gamma(s)(\mathrm{d}u)\mathrm{d}s$ and $\ell_n(t) = x_n(t)- x(t)$  for all $t\in[0,T]$, proving the uniform convergence of $||\ell_n||_H$ on $[0,T]$.

\end{proof}

The next lemma establishes the continuity property of the operator $R$.

\medskip

\begin{lemma}\label{CONTINUITYLEMMA}
Suppose that assumptions \ref{hyp:L}, \ref{hyp:f}, \ref{hyp:lambda}, \ref{hyp:Q}, \ref{hyp:cost} are satisfied.
Let $b$ be a continuous bounding function for the PDMP. Let $w:\Upsilon\times U \rightarrow \mathbb{R}$ be continuous with $|w(z,u)|\leq c_wB^*(z)$ for some $c_w\geq0$. Then

\[
(z,\gamma) \rightarrow \int_0^{T-h} \chi_s^{\gamma}(z)\left( \int_Z w(\phi_s^{\gamma}(z),d,h+s,u)\gamma(s)(\mathrm{d}u)\right)\mathrm{d}s
\]
is continuous on $\Upsilon\times \mathcal{R}$, with $z:=(v,d,h)$.
Quite straightforwardly, 

\[
(z,\gamma) \rightarrow Rw(z,\gamma) = c'(z,\gamma) + Q'w\, (z,\gamma)
\]
is continuous on $\Upsilon\times \mathcal{R}$.
\end{lemma}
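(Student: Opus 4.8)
The plan is to prove the continuity of the map $(z,\gamma)\mapsto \int_0^{T-h}\chi_s^\gamma(z)\bigl(\int_Z w(\phi_s^\gamma(z),d,h+s,u)\gamma(s)(\mathrm du)\bigr)\mathrm ds$ by a sequential argument, using the metrizability of $\mathcal R$ noted in Section~\ref{relaxedcontrolsInfDim}. Take $(z_n,\gamma_n)\to(z,\gamma)$ in $\Upsilon\times\mathcal R$; writing $z_n=(v_n,d_n,h_n)$, $z=(v,d,h)$, the discreteness of $D$ gives $d_n=d$ for $n$ large. First I would record the ingredients we already have: by Lemma~\ref{ContinuityTheo}, $\phi^{\gamma_n}_\cdot(z_n)\to\phi^\gamma_\cdot(z)$ in $C([0,T];H)$; consequently, using \ref{hyp:lambda}2.--3. and $d_n=d$, $s\mapsto\lambda_d(\phi^{\gamma_n}_s(z_n),\gamma_n(s))$ stays uniformly bounded by $M_\lambda$, and one shows $\chi^{\gamma_n}_{\cdot}(z_n)\to\chi^\gamma_\cdot(z)$ uniformly on $[0,T-h]$ (the exponents converge by dominated convergence combined with the weak$^*$ convergence of $\gamma_n$ tested against the Carathéodory integrand $(s,u)\mapsto\lambda_d(\phi^\gamma_s(z),u)$, after absorbing the $\phi^{\gamma_n}_s(z_n)\to\phi^\gamma_s(z)$ error via \ref{hyp:lambda}3. and the uniform bound on a compact set from Theorem~\ref{THEOGENERATOR}b)). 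Also $h_n\to h$, so $T-h_n\to T-h$, and the integrand is bounded by $c_w B^*(z)$ uniformly in $n$ on the relevant compact set, so the change in the integration limit contributes a term tending to $0$.

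Next I would split the difference of the two integrals into three pieces, in the now-standard way: replace $\chi^{\gamma_n}_s(z_n)$ by $\chi^\gamma_s(z)$ (controlled by the uniform convergence of the survival functions and the uniform bound on the $w$-integral), then replace $w(\phi^{\gamma_n}_s(z_n),d,h_n+s,u)$ by $w(\phi^\gamma_s(z),d,h+s,u)$ inside the $\gamma_n$-integral, and finally pass $\gamma_n\to\gamma$ in the term $\int_0^{T-h}\chi^\gamma_s(z)\int_Z w(\phi^\gamma_s(z),d,h+s,u)\gamma_n(s)(\mathrm du)\mathrm ds$. The second replacement uses the joint continuity of $w$ together with the uniform (in $s$) convergence $\phi^{\gamma_n}_s(z_n)\to\phi^\gamma_s(z)$ and $h_n+s\to h+s$: on the compact set $\{\phi^\gamma_s(z):s\in[0,T]\}\cup\bigcup_n\{\phi^{\gamma_n}_s(z_n):s\in[0,T]\}\times D\times[0,T]\times U$, $w$ is uniformly continuous, so $\sup_{s,u}|w(\phi^{\gamma_n}_s(z_n),d,h_n+s,u)-w(\phi^\gamma_s(z),d,h+s,u)|\to 0$, and since each $\gamma_n(s)$ is a probability measure the integral of this difference is bounded by the same quantity. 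The third (genuinely ``relaxed'') limit is exactly of the type handled in Lemma~\ref{ContinuityTheo}: the map $(s,u)\mapsto\chi^\gamma_s(z)\,\mathbf 1_{[0,T-h]}(s)\,w(\phi^\gamma_s(z),d,h+s,u)$ is a Carathéodory integrand on $[0,T]\times Z$ (measurable in $s$, continuous in $u$ by continuity of $w$, dominated by $c_w B^*(z)\in L^1(0,T)$), hence identifies with an element of $L^1(C(Z))$, and $\gamma_n\to\gamma$ in the weak$^*$ topology of $L^\infty(M(Z))=[L^1(C(Z))]^*$ forces the integral to converge.

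For the final assertion about $Rw=c'(z,\gamma)+(\mathcal Q'w)(z,\gamma)$: the term $(\mathcal Q'w)(z,\gamma)$ is exactly of the displayed form with $w$ replaced by $(\phi,r',h',u)\mapsto\lambda_d(\phi,u)\int_D w(\phi,r,h')\mathcal Q(\mathrm dr|\phi,d,u)$, which is continuous and $B^*$-dominated by \ref{hyp:lambda}, \ref{hyp:Q} (weak continuity of $\mathcal Q$, finiteness of $D$) and the bounding-function inequality \eqref{boundingc'}; and $c'(z,\gamma)$ given by \eqref{relaxedCost} is of the same type with $w$ replaced by the continuous, quadratically-bounded running cost $c$ (using \ref{hyp:cost}, so that $c$ is $b$-dominated, hence $B^*$-dominated after the $e^{\zeta(T-h)}$ factor), plus the terminal term $\chi^\gamma_{T-h}(z)g(\phi^\gamma_{T-h}(z))$, whose continuity follows from the uniform convergence of $\chi^{\gamma_n}_\cdot(z_n)$, the $C([0,T];H)$-convergence of $\phi^{\gamma_n}_\cdot(z_n)$, the convergence $T-h_n\to T-h$, and the continuity of $g$. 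I expect the main obstacle to be the uniform convergence of the survival functions $\chi^{\gamma_n}_\cdot(z_n)\to\chi^\gamma_\cdot(z)$ together with the uniform-in-$s$ bookkeeping needed to keep every intermediate estimate on a single compact subset of $H$ so that the local-Lipschitz and uniform-continuity arguments apply; once that is in place, the three-term splitting is routine and the weak$^*$ limit is handled verbatim as in Lemma~\ref{ContinuityTheo}.
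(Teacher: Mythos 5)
Your proposal is correct, and its core (the sequential argument, the splitting of the difference, the convergence of the survival functions via the local Lipschitz property \ref{hyp:lambda}3.\ plus weak$^*$ convergence against a Carath\'eodory integrand, and the final weak$^*$ passage in $L^\infty(M(Z))=[L^1(C(Z))]^*$) coincides with the paper's Part~1. Where you genuinely diverge is in the treatment of the unboundedness of $w$: the paper first proves the statement for $w$ bounded by a constant, and then handles $|w|\leq c_w B^*$ by a separate approximation step --- writing $w^B = w - c_wB^*\leq 0$, approximating it by a nonincreasing sequence of bounded continuous functions to get upper semicontinuity of $W$, and then repeating with $-w - c_wB^*$ to get lower semicontinuity, using in addition the continuity of $(z,\gamma)\mapsto\int_0^{T-h}\chi_s^{\gamma}(z)b(\phi_s^{\gamma}(z))e^{\zeta^*(T-h-s)}\mathrm{d}s$. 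You instead observe that along a fixed convergent sequence $(z_n,\gamma_n)\to(z,\gamma)$ the trajectories $\phi^{\gamma_n}_\cdot(z_n)$ converge in $C([0,T];H)$ (Lemma \ref{ContinuityTheo}), so their union together with the limit trajectory is a compact subset of $H$, on which the continuous bounding function $b$, hence $B^*$ and $w$, are bounded and $w$ is uniformly continuous; this makes the bounded-case estimates apply directly and eliminates the paper's Part~2 altogether. This shortcut is valid (metrizability of $\mathcal{R}$ justifies the sequential reduction, and compactness of the union of trajectories is standard given $C([0,T];H)$-convergence) and is arguably more direct; the paper's semicontinuity route is more conservative in that it never needs compactness of the whole family of trajectories, only pointwise-in-$s$ information. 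Two cosmetic points: your claim of \emph{uniform} (in $s$) convergence of $\chi^{\gamma_n}_\cdot(z_n)$ needs the small extra remark that the exponents are equi-Lipschitz in $t$ (bounded by $M_\lambda$), although pointwise convergence plus dominated convergence, as in the paper, already suffices; and the domination of $w_2(z,u)=\lambda_d(v,u)\int_Dw(v,r,h)\mathcal{Q}(\mathrm{d}r|v,d,u)$ follows from $|w|\leq c_wB^*$ and $\lambda_d\leq M_\lambda$ rather than from \eqref{boundingc'}. Your explicit treatment of the terminal term $\chi^{\gamma}_{T-h}(z)g(\phi^{\gamma}_{T-h}(z))$ is a detail the paper leaves implicit.
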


\begin{proof}
See Appendix \ref{proofContinuity}.
\end{proof}

It now remains to show that there exists a bounding function for the PDMP. This is the result of the next lemma.

\begin{lemma}\label{lemBoundingFunctionPDMP}
Suppose assumptions \ref{hyp:L}, \ref{hyp:f} and \ref{hyp:cost} are satisfied. Now define $\tilde{c}$ and $\tilde{g}$ from $c$ and $g$ by taking the absolute value of the coefficients of these quadratic functions. Let $M_2>0$. Define $M_3 := (M_2+b_1T)M_Se^{M_Sb_2T}$ and $b:H\rightarrow \R_+$ by 

\begin{equation}
b(v) := \left\{
\begin{aligned}
&\max_{||x||_H\leq M_3}\max_{u\in U}\tilde{c}(x,u) + \max_{||x||_H\leq M_3} \tilde{g}(x) ,&\text{if  } ||v||_H\leq M_3,\\
& \max_{u\in U}\tilde{c}(v,u) + \tilde{g}(v),& \text{if  } ||v||_H > M_3,
\end{aligned}
\right.
\end{equation}
is a continuous bounding function for the PDMP.
\end{lemma}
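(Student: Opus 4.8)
The plan is to verify the three defining properties of a bounding function (Definition \ref{boundingFunction}) for the candidate $b$, and separately to check that $b$ is continuous. Properties (i) and (ii) are almost immediate from the construction: for $\|v\|_H > M_3$ we have $b(v) = \max_{u\in U}\tilde c(v,u) + \tilde g(v) \geq \tilde c(v,u) \geq c(v,u)$ and $b(v) \geq \tilde g(v) \geq g(v)$, using that $\tilde c, \tilde g$ dominate $c, g$ pointwise (they have the same monomials with absolute-value coefficients, and $\|v\|_H, \bar d(0,u) \geq 0$). For $\|v\|_H \leq M_3$, the constant value of $b$ on that ball is $\max_{\|x\|_H\leq M_3}\max_u \tilde c(x,u) + \max_{\|x\|_H\leq M_3}\tilde g(x)$, which dominates $\tilde c(v,u)$ and $\tilde g(v)$ hence $c(v,u)$ and $g(v)$ there as well. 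So (i) and (ii) hold with $c_c = c_g = 1$.

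The substantive point is property (iii): I must show $b(\phi_t^\gamma(z)) \leq c_\phi\, b(v)$ for all $(t,z,\gamma)\in[0,T]\times\Upsilon\times\mathcal R$, $z=(v,d,h)$. The key input is an a priori bound on the mild solution. Starting from \eqref{relaxedSurvival}, $\phi_t^\gamma(z) = S(t)v + \int_0^t\int_Z S(t-s)f_d(\phi_s^\gamma(z),u)\gamma_s(\mathrm du)\,\mathrm ds$; applying $\|\cdot\|_H$, using $\|S(t)\|\leq M_S$ on $[0,T]$ and \ref{hyp:fprim}2 (which follows from \ref{hyp:f}) in the form $\|f_d(x,u)\|_H \leq b_1 + b_2\|x\|_H$, one gets $\|\phi_t^\gamma(z)\|_H \leq M_S\|v\|_H + M_S\int_0^t (b_1 + b_2\|\phi_s^\gamma(z)\|_H)\,\mathrm ds$, and Gronwall yields $\sup_{t\in[0,T]}\|\phi_t^\gamma(z)\|_H \leq (\|v\|_H + b_1 T)M_S e^{M_S b_2 T}$. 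Now the case split: if $\|v\|_H \leq M_2$ then this bound is $\leq M_3$, so $\phi_t^\gamma(z)$ stays in the ball of radius $M_3$ and $b(\phi_t^\gamma(z))$ equals the constant value $b(v)$ (note $b$ is constant, equal to that maximum, on the whole ball $\|v\|_H\leq M_3 \supset \|v\|_H \leq M_2$), giving (iii) with constant $1$ on this region. If $\|v\|_H > M_2$, one has to compare the value of $b$ at $\phi_t^\gamma(z)$ — which is at a point of norm at most $(\|v\|_H + b_1T)M_S e^{M_S b_2 T}$ — with $b(v)$; since $\tilde c(\cdot,u)$ and $\tilde g$ are quadratic with nonnegative coefficients, they are nondecreasing in $\|x\|_H$, so $b$ restricted to $\{\|x\|_H > M_3\}$ is a nondecreasing function of the norm, and one bounds $b(\phi_t^\gamma(z))$ by $\tilde c$ and $\tilde g$ evaluated at norm $(\|v\|_H + b_1T)M_S e^{M_S b_2 T}$, then factors out a constant $c_\phi$ depending only on $M_S, b_1, b_2, T$ and the (finitely many) coefficients of $c, g$, using $\|v\|_H > M_2$ to absorb the additive $b_1 T$ term into a multiplicative constant. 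Taking the maximum of the constants obtained in the two regions gives a single $c_\phi$.

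Finally, continuity of $b$: on the open set $\{\|v\|_H > M_3\}$, $b$ is a finite sum/max of continuous functions (norms and the continuous map $v\mapsto \max_u \tilde c(v,u)$, continuity of the latter by compactness of $U$ and joint continuity of $\tilde c$), hence continuous; on $\{\|v\|_H < M_3\}$ it is constant; and at the interface $\|v\|_H = M_3$ the two definitions agree because for such $v$, $\max_u\tilde c(v,u) + \tilde g(v) \leq \max_{\|x\|_H\leq M_3}\max_u\tilde c(x,u) + \max_{\|x\|_H\leq M_3}\tilde g(x)$ (so $b$ from the outer formula is $\leq$ the inner constant) while also the inner maxima are attained and, by monotonicity in the norm, are attained on the sphere $\|x\|_H = M_3$, giving the reverse inequality — so the two pieces match continuously on $\{\|v\|_H = M_3\}$. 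The main obstacle is bookkeeping in property (iii): correctly exploiting monotonicity of the quadratic $\tilde c, \tilde g$ in the norm and the free parameter $M_2$ to produce a genuinely uniform constant $c_\phi$, rather than anything $\gamma$- or $z$-dependent; everything else is routine.
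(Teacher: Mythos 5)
Your proposal is correct and follows essentially the same route as the paper: properties (i)--(ii) with $c_c=c_g=1$, the Gronwall mild-solution bound $\sup_{t\le T}\|\phi_t^{\gamma}(z)\|_H\le(\|v\|_H+b_1T)M_Se^{M_Sb_2T}$, a case split around $M_2$/$M_3$, and monotonicity of the nonnegative-coefficient quadratics in $\|\cdot\|_H$ together with $\|v\|_H>M_2$ to absorb the additive $b_1T$ into a multiplicative factor (the paper makes this explicit as $\|\phi_t^{\gamma}(z)\|_H\le\|v\|_H M_3/M_2$, giving $c_\phi=M_3^2/M_2^2$). Your explicit verification of continuity at the interface $\|v\|_H=M_3$ is a detail the paper leaves implicit, but it does not change the argument.
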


\begin{proof}
For all $(v,u)\in H\times U$, $c(v,u)\leq b(v)$ and $g(v)\leq b(v)$. Now let $(t,z,\gamma)\in [0,T]\times \Upsilon\times \mathcal{R}$, $z=(v,d,h)$.
\begin{itemize}

\item If $||\phi_t^{\gamma}(z)||_H \leq M_3$, $b(\phi_t^{\gamma}(z)) = b(M_3)$. If $||v||_H \leq M_3$ then $b(v) = b(M_3) = b(\phi_t^{\gamma}(z))$. Otherwise, $||v||_H > M_3$ and $b(v) > b(M_3) = b(\phi_t^{\gamma}(z))$.

\item If $||\phi_t^{\gamma}(z)||_H > M_3$ then $||v||_H > M_2$ and $||\phi_t^{\gamma}(z)||_H \leq ||v||_HM_3/M_2$ (See \ref{phiBound} in Appendix \ref{proofTheorem1}). So,

\begin{equation*}
b(\phi_t^{\gamma}(z)))  = \max_{u\in U}\tilde{c}(\phi_t^{\gamma}(z),u) + \tilde{g}(\phi_t^{\gamma}(z)) \leq b\left(\frac{M_3}{M_2}v\right) \leq \frac{M_3^2}{M_2^2}b(v),
\end{equation*}

since $M_3/M_2 > 1$.

\end{itemize}

\end{proof}

\begin{remark}\label{remarkCost}
Lemma \ref{lemBoundingFunctionPDMP} ensures the existence of a bounding function for the PDMP. To broaden the class of cost functions considered, we could just assume the existence of a bounding for the PDMP in Theorem \ref{TheoremExistenceOptimalControl} and then, the assumption on $c$ and $g$ should just be the continuity.
\end{remark}

\subsection{Existence of an optimal ordinary strategy}

Ordinary strategies are of crucial importance because they are the ones that the controller can implement in practice.
Here we give convexity assumptions that ensure the existence of an ordinary optimal control strategy for the PDMP.

\begin{enumerate}[label=(A)] 
\item \label{HypOrdinaryControlExists}
\begin{enumerate}
\item  For all $d\in D$, the function $f_d : (y,u)\in H\times U \rightarrow E$ is linear in the control variable $u$.\label{HypLinearFlow}
\item For all $d\in D$, the functions $\lambda_d : (y,u) \in H\times U \rightarrow \mathbb{R}_+$ and $\lambda_d\mathcal{Q} : (y,u)\in H\times U \rightarrow \lambda_d(y,u)\mathcal{Q}(\cdot|y,d,u)$ are respectively concave and convexe in the control variable $u$. \label{HypConvexJumpRate}
\item The cost function $c:(y,u)\in E\times U \rightarrow \mathbb{R}_+ $ is convex in the control variable $u$.\label{HypConvexCost}
\end{enumerate}
\end{enumerate}

\begin{theorem}\label{TheoExistenceOrdinaryOptimalControl}
Suppose that assumptions \ref{hyp:L}, \ref{hyp:f}, \ref{hyp:lambda}, \ref{hyp:Q}, \ref{hyp:cost} and \ref{HypOrdinaryControlExists} are satisfied.  If we consider $\mu^*\in \mathcal{A}^\mathcal{R}$ an optimal relaxed strategy for the PDMP, then the ordinary strategy $\bar{\mu}_t := \int_Z u\mu^*_t(du)\in\mathcal{A}$  is optimal, i.e. $V_{\bar{\mu}}(z) = \tilde{V}_{\mu^*}(z) = V(z), \quad \forall z\in \Upsilon.$
\end{theorem}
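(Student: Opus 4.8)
The plan is to show that the ordinary strategy $\bar\mu$ obtained by averaging the relaxed optimal strategy $\mu^*$ performs at least as well as $\mu^*$ in the MDP, hence is optimal. By Theorem~\ref{EquivalenceTheo} it suffices to work with the MDP cost functions $J_\mu$, and by Theorem~\ref{TheoremExistenceOptimalControl} we already have an optimal relaxed $\mu^*$ with $J'(z)=J_{\mu^*}(z)$; since $\mathcal A\subset\mathcal A^{\mathcal R}$ trivially $J_{\bar\mu}(z)\ge J'(z)$, so the whole content is the reverse inequality $J_{\bar\mu}(z)\le J_{\mu^*}(z)$. The natural route is to compare the two stationary strategies stage by stage through the contraction operators of Theorem~\ref{TheoContractingMDP}: if one can prove the pointwise inequality $\mathcal T_{\bar\mu}w(z)\le \mathcal T_{\mu^*}w(z)$ for every $w\in\mathcal L^*$ with $w\ge 0$ (or at least for the relevant iterates $\mathcal T_{\mu^*}^n 0$), then by monotonicity of $\mathcal T_{\bar\mu}$ one gets $\mathcal T_{\bar\mu}^n 0\le \mathcal T_{\mu^*}^n 0$ for all $n$, and letting $n\to\infty$ yields $J_{\bar\mu}\le J_{\mu^*}$ by the fixed-point characterization. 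So the crux reduces to the one-step comparison.

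For the one-step comparison, fix $z=(v,d,h)\in\Upsilon$ and set $\gamma:=\mu^*(z)\in\mathcal R([0,T],U)$ and $\bar a(s):=\int_Z u\,\gamma(s)(\mathrm d u)\in U$ (here we use that $U$ is convex and closed, which should be added to the standing assumptions, or is part of \ref{HypOrdinaryControlExists}). The key observation driving everything is that under \ref{HypLinearFlow}, linearity of $f_d$ in $u$ gives $\int_Z f_d(x,u)\gamma(s)(\mathrm d u)=f_d(x,\bar a(s))$, so that the relaxed mild equation \eqref{relaxedSurvival} for $\gamma$ and the ordinary mild equation \eqref{flow} for $\bar a$ have \emph{the same} right-hand side; by uniqueness of the solution, $\phi_t^\gamma(z)=\phi_t^{\bar a}(z)$ for all $t$. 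Call this common flow $\phi_t$. Next, by Jensen's inequality applied with the concavity of $\lambda_d(x,\cdot)$ from \ref{HypConvexJumpRate}, we have $\int_Z\lambda_d(\phi_t,u)\gamma(t)(\mathrm d u)\le \lambda_d(\phi_t,\bar a(t))$, i.e. $\lambda_d(\phi_t,\gamma(t))\le\lambda_d(\phi_t,\bar a(t))$. I then want to conclude $\chi_t^\gamma(z)\le\chi_t^{\bar a}(z)$ — wait, that inequality goes the \emph{wrong} way: a smaller rate gives a larger survival probability, so in fact $\chi_t^{\gamma}(z)\ge\chi_t^{\bar a}(z)$. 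This is the subtle point, and it is why the cost structure and the convexity of $\lambda_d\mathcal Q$ (not just of $\lambda_d$) are needed simultaneously: one has to show the combined effect on $c'(z,\cdot)+(\mathcal Q'w)(z,\cdot)$ still goes in the right direction. Concretely, writing out $Rw(z,\cdot)$ on the common flow, the running-cost part $\int_0^{T-h}\chi_t^\cdot(z)\int_Z c(\phi_t,u)\,\gamma(t)(\mathrm d u)\,\mathrm d t$ is handled by convexity of $c(\phi_t,\cdot)$ via Jensen ($\int_Z c(\phi_t,u)\gamma(t)(\mathrm d u)\ge c(\phi_t,\bar a(t))$), and the transition part $\int_0^{T-h}\chi_t^\cdot(z)\int_Z\lambda_d(\phi_t,u)\int_D w(\phi_t,r,h+t)\mathcal Q(\mathrm d r|\phi_t,d,u)\,\gamma(t)(\mathrm d u)\,\mathrm d t$ is handled by convexity of $u\mapsto\lambda_d(\phi_t,u)\mathcal Q(\cdot|\phi_t,d,u)$ in \ref{HypConvexJumpRate}, using $w\ge 0$.

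The honest difficulty — and the step I expect to be the main obstacle — is the coupling between the survival factor $\chi_t$ and the integrands, because $\chi$ depends nonlinearly (exponentially) on the rate and moves in the opposite direction to what the other terms want. The clean way around it is to avoid comparing $Rw$ directly and instead go back to the PDMP-side expression $V_\mu$ of Theorem~\ref{EquivalenceTheo}: using the probabilistic representation one can recognize $J_{\bar\mu}(z)=V_{\bar\mu}(z)$ and $J_{\mu^*}(z)=V_{\mu^*}(z)$, and then the desired inequality should follow from a pathwise/iterated-conditioning argument in which at each jump epoch one applies Jensen in $u$ to the \emph{conditional} one-jump cost, exploiting that the deterministic flow between jumps is unchanged (by \ref{HypLinearFlow}) so the only randomness affected by the averaging is the next jump time and destination, whose law is governed by the convex object $\lambda_d\mathcal Q$. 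One makes this rigorous by induction on the number of remaining jumps, as in the Bertsekas--Shreve framework already invoked: the $n$-stage value functions satisfy $V^{\bar\mu}_n\le V^{\mu^*}_n$, and passing to the limit with the bounding-function estimates of Lemma~\ref{bBound} and the following lemma closes the argument. Finally, one checks $\bar\mu\in\mathcal A$, i.e. that $z\mapsto\bar\mu(z)=\int_Z u\,\mu^*(z)_\cdot(\mathrm d u)$ is a measurable map into $\mathcal U_{ad}([0,T],U)$, which follows from measurability of $\mu^*$ and continuity of the barycenter map on $M^1_+(Z)$ together with convexity and closedness of $U$. Combining, $V_{\bar\mu}(z)\le V_{\mu^*}(z)=\tilde V(z)\le V(z)\le V_{\bar\mu}(z)$, giving equality throughout.
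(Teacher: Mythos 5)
Your architecture (reduce to the MDP via Theorem \ref{EquivalenceTheo}, compare the two stationary strategies through the operators of Theorem \ref{TheoContractingMDP}, then use monotonicity and the fixed-point characterization) and your ingredients (identical flow by linearity of $f_d$, Jensen for the convex cost $c$, Jensen for the concave rate $\lambda_d$, convexity of $\lambda_d\mathcal{Q}$, measurability of $\bar\mu$, the final chain $V_{\bar\mu}\le V_{\mu^*}=\tilde V\le V\le V_{\bar\mu}$) are exactly those of the paper. The genuine gap is at the step you yourself flag as the ``main obstacle'': you never prove the one-step inequality, because you believe the direction $\chi_t^{\gamma}(z)\ge\chi_t^{\bar\gamma}(z)$ works against you, and you replace the missing argument by a vague ``pathwise/iterated-conditioning'' sketch on the PDMP side. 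That sketch does not resolve anything: the joint law of the next jump time and post-jump location has density $\chi_s^{\cdot}(z)\,\lambda_d\,\mathcal{Q}$, in which the survival factor depends on the whole past of the rate, so one faces exactly the same comparison there.

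The point you missed is that $\chi^{\gamma}\ge\chi^{\bar\gamma}$ is the \emph{favorable} direction, because every quantity it multiplies is nonnegative. On the common flow $\phi_s:=\phi_s^{\gamma}(z)=\phi_s^{\bar\gamma}(z)$ one has, for $w\ge 0$,
\begin{align*}
Rw(z,\gamma)&=\int_0^{T-h}\chi_s^{\gamma}(z)\int_Z c(\phi_s,u)\,\gamma(s)(\mathrm{d}u)\,\mathrm{d}s+\chi_{T-h}^{\gamma}(z)\,g(\phi_{T-h})\\
&\quad+\int_0^{T-h}\chi_s^{\gamma}(z)\int_Z\lambda_d(\phi_s,u)\int_{D} w(\phi_s,r,h+s)\,\mathcal{Q}(\mathrm{d}r|\phi_s,d,u)\,\gamma(s)(\mathrm{d}u)\,\mathrm{d}s,
\end{align*}
and since $c\ge0$, $g\ge0$, $w\ge0$, replacing $\chi^{\gamma}$ by the smaller $\chi^{\bar\gamma}$ only decreases each of the three terms; afterwards Jensen's inequality (convexity of $c(\phi_s,\cdot)$ for the first term, convexity of $u\mapsto\lambda_d(\phi_s,u)\mathcal{Q}(\cdot|\phi_s,d,u)$ integrated against the nonnegative $w$ for the third) yields $Rw(z,\gamma)\ge Rw(z,\bar\gamma)$ directly, with no coupling difficulty between the survival factor and the integrands. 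This termwise domination is precisely the paper's proof. With it, evaluating at $\gamma=\mu^*(z)$ and $w=J'$ shows $\bar\mu(z)$ is also a minimizer of $RJ'(z,\cdot)$, and your remaining steps (monotonicity of $\mathcal{T}_{\bar\mu}$, iteration, Lemma \ref{bBound}, and the closing chain of inequalities) finish the argument; your side remark that one needs $U$ convex and closed so that $\bar\gamma(s)\in U$ is a fair observation, which the paper leaves implicit in assumption \ref{HypOrdinaryControlExists}.
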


\begin{proof}
This result is based on the fact that for all $(z,\gamma)\in \Upsilon\times \mathcal{R}, (Lw)(z,\gamma) \geq (Lw)(z,\bar{\gamma})$, with $\bar{\gamma} = \int_Z u\gamma(\mathrm{d}u).$ Indeed, the fact that the function $f_d$ is linear in the control variable implies that for all $(t,z,\gamma)\in [0,T]\times\Upsilon\times \mathcal{R},\phi_t^{\gamma}(z) =\phi_t^{\bar{\gamma}}(z)$. The convexity assumptions \ref{HypOrdinaryControlExists} give the following inequalities
\begin{align*}
 \int_Z \lambda_d(\phi_s^{\gamma}(z),u)\gamma(s)(\mathrm{d}u) &\leq \lambda_d(\phi_s^{\bar{\gamma}}(z),\bar{\gamma}(s)),\\
  \int_Z \lambda_d(\phi_s^{\gamma}(z),u)\mathcal{Q}(E|\phi_s^{\gamma}(z),d,u)\gamma(s)(\mathrm{d}u) & \geq  \lambda_d(\phi_s^{\bar{\gamma}}(z),\bar{\gamma}(s))\mathcal{Q}(E|\phi_s^{\bar{\gamma}}(z),d,\bar{\gamma}(s)),\\
 \int_Z c(\phi_s^{\gamma}(z),u)\gamma_s(\mathrm{d}u) &\geq c(\phi_s^{\bar{\gamma}}(z),\bar{\gamma_s}),
\end{align*}

for all $(s,z,\gamma,E)\in [0,T]\times\Upsilon\times \mathcal{R}\times \mathcal{B}(D)$, so that in particular $\chi_t^{\gamma}(z)\geq  \chi_t^{\bar{\gamma}}(z)$. We can now denote for all $(z,\gamma)\in \Upsilon\times \mathcal{R}$ and $w: \Upsilon\rightarrow \R_+$,

\begin{align*}
 (Lw)(z,\gamma)  & = \int_0^{T-h} \chi_s^{\gamma}(z) \int_Z c(\phi_s^{\gamma}(z),u)\gamma(s)(\mathrm{d}u) \mathrm{d}s +  \chi_{T-h}^{\gamma}(z)g(\phi_{T-h}^{\gamma}(z))\\
 & +  \int_0^{T-h}  \chi_s^{\gamma}(z) \int_Z \lambda_d(\phi_s^{\gamma}(z),u) \int_{D}w(\phi_s^{\gamma}(z),r,h+s)\mathcal{Q}(\mathrm{d}r|\phi_s^{\gamma}(z),d,u)\gamma(s)(\mathrm{d}u)\mathrm{d}s\\
 & \geq \int_0^{T-h}\chi_s^{\bar{\gamma}}(z)  c(\phi_s^{\bar{\gamma}}(z),\bar{\gamma}(s)) \mathrm{d}s +  \chi_{T-h}^{\bar{\gamma}}(z)g(\phi_{T-h}^{\bar{\gamma}}(z))\\
 & +  \int_0^{T-h}  \chi_s^{\bar{\gamma}}(z) \int_Z \lambda_d(\phi_s^{\bar{\gamma}}(z),u) \int_{D}w(\phi_s^{\bar{\gamma}}(z),r,h+s)\mathcal{Q}(\mathrm{d}r|\phi_s^{\bar{\gamma}}(z),d,u)\gamma(s)(\mathrm{d}u)\mathrm{d}s.\\
\end{align*}

\noindent Furthermore,

\begin{align*}
 \int_Z \lambda_d(\phi_s^{\bar{\gamma}}(z),u)& \int_Dw(\phi_s^{\bar{\gamma}}(z),r,h+s)\mathcal{Q}(\mathrm{d}r|\phi_s^{\bar{\gamma}}(z),d,u)\gamma(s)(\mathrm{d}u)\geq\\ 
 &\lambda_d(\phi_s^{\bar{\gamma}}(z),\bar{\gamma}(s)) \int_D w(\phi_s^{\bar{\gamma}}(z),r,h+s)\mathcal{Q}(\mathrm{d}r|\phi_s^{\bar{\gamma}}(z),d,\bar{\gamma}(s)),
\end{align*} so that

\begin{align*}
 (Lw)(z,\gamma) &\geq \int_0^{T-h}\chi_s^{\bar{\gamma}}(z)  c(\phi_s^{\bar{\gamma}}(z),\bar{\gamma}(s)) \mathrm{d}s +  \chi_{T-h}^{\bar{\gamma}}(z)g(\phi_{T-h}^{\bar{\gamma}}(z))\\
 & +  \int_0^{T-h}  \chi_s^{\bar{\gamma}}(z) \lambda_d(\phi_s^{\bar{\gamma}}(z),\bar{\gamma}(s)) \int_D w(\phi_s^{\bar{\gamma}}(z),r,h+s)\mathcal{Q}(\mathrm{d}r|\phi_s^{\bar{\gamma}}(z),d,\bar{\gamma}(s))\\
 & =  (Lw)(z,\bar{\gamma}).
\end{align*}
\end{proof}

\subsection{An elementary example}\label{elementaryExample}
Here we treat an elementary example that satisfies the assumptions made in the previous two sections. 

Let $V = H_0^1([0,1])$,$H=L^2([0,1])$, $D=\{-1,1\}$, $U=[-1,1]$. $V$ is a Hilbert space with inner product 
\[
(v,w)_V := \int_0^1 v(x)w(x) + v'(x)w'(x) \mathrm{d}x. 
\]

\noindent We consider the following PDE for the deterministic evolution between jumps

\begin{equation*}
\frac{\partial}{\partial t }v(t,x) = \Delta v(t,x) + (d+u)v(t,x),
\end{equation*}
with Dirichlet boundary conditions.
We define the jump rate function for $(v,u)\in H\times U$ by 

\begin{equation*}
\lambda_1(v,u) = \frac{1}{e^{-||v||^2}+1} + u^2, \qquad \lambda_{-1}(v,u) = e^{-\frac{1}{||v||^2+1}} + u^2,
\end{equation*}
and the transition measure by $\mathcal{Q}(\{-1\}|v,1,u) = 1$, and $\mathcal{Q}(\{1\}|v,-1,u) = 1.$

Finally, we consider a quadratic cost function $
c(v,u) = K||V_{\mathrm{ref}}-v||^2 + u^2$, where $V_{\mathrm{ref}}\in D(\Delta)$ is a reference signal that we want to approach.

\begin{lemma} The PDMP defined above admits the continuous bounding function 

\begin{equation}
b(v) := ||V_{\mathrm{ref}}||_H^2 + ||v||_H^2 + 1.
\end{equation}

Furthermore, the value function of the optimal control problem is continuous and there exists an optimal ordinary control strategy.

\end{lemma}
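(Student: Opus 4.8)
The plan is to check that the data of this example fall within the scope of Theorems~\ref{TheoremExistenceOptimalControl} and~\ref{TheoExistenceOrdinaryOptimalControl}. Concretely: verify the structural assumptions \ref{hyp:L}, \ref{hyp:f}, \ref{hyp:lambda}, \ref{hyp:Q}; observe that, although the running cost $c(v,u)=K\|V_{ref}-v\|_H^2+u^2$ is not literally of the quadratic form required in \ref{hyp:cost}, it is continuous (as is $g\equiv 0$), so that by Remark~\ref{remarkCost} it suffices to produce a bounding function for the PDMP; prove that the proposed $b$ is such a function; then invoke Theorem~\ref{TheoremExistenceOptimalControl} for continuity of the value function and existence of an optimal relaxed strategy, and Theorem~\ref{TheoExistenceOrdinaryOptimalControl}, after checking the convexity assumptions \ref{HypOrdinaryControlExists}, to upgrade it to an optimal ordinary one.

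Verifying the assumptions is routine. Assumption \ref{hyp:L} holds for $L=-\Delta$ with Dirichlet conditions on $H=L^2([0,1])$, $V=H^1_0([0,1])$: it is linear and monotone, $\|{-\Delta v}\|_{V^*}\leq\|v\|_V$, $\langle -\Delta v,v\rangle=\|v'\|_{L^2}^2\geq c_2\|v\|_V^2$ by the Poincar\'e inequality, and $-L$ generates the heat semigroup $(S(t))_{t\geq0}$, which is a contraction on $H$ and is compact for $t>0$ since $-\Delta$ has compact resolvent (the embedding $H^1_0\hookrightarrow L^2$ is compact). For \ref{hyp:f}: $f_d(v,u)=(d+u)v$ is Lipschitz in $v$ uniformly in $u\in[-1,1]$ with constant $l_f=2$, is jointly (strongly, hence weakly) continuous, and satisfies $\|f_d(v,u)\|_H\leq 2\|v\|_H$, i.e.\ the growth bound in \ref{hyp:fprim} holds with $b_1=0$, $b_2=2$; note also that $f_d$ is affine in $u$. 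For \ref{hyp:lambda}: $v\mapsto(e^{-\|v\|_H^2}+1)^{-1}$ and $v\mapsto e^{-1/(\|v\|_H^2+1)}$ take values in $[\tfrac12,1)$ and $[e^{-1},1)$ respectively, so $\delta\leq\lambda_d\leq M_\lambda$; $u\mapsto\lambda_d(v,u)$ is continuous; and $v\mapsto\lambda_d(v,u)$ is locally Lipschitz uniformly in $u$, being the composition of the locally Lipschitz map $v\mapsto\|v\|_H^2$ with a $C^1$ function with bounded derivative, the $u^2$ term being independent of $v$. Assumption \ref{hyp:Q} is immediate: $\mathcal{Q}$ does not depend on $(v,u)$, it is a transition probability, $(v,u)\mapsto\mathcal{Q}(\{p\}|v,d,u)\in\{0,1\}$ is continuous, and $\mathcal{Q}(\{d\}|v,d,u)=0$.

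The crux is to show that $b(v):=\|V_{ref}\|_H^2+\|v\|_H^2+1$ is a bounding function in the sense of Definition~\ref{boundingFunction}. It is continuous and nonnegative. Condition~(i): $c(v,u)\leq 2K\|V_{ref}\|_H^2+2K\|v\|_H^2+1\leq\max(2K,1)\,b(v)$. Condition~(ii) is trivial with $c_g=0$ since $g\equiv0$. Condition~(iii) is the only point needing an estimate: from the mild formulation~(\ref{relaxedSurvival}), the linear growth $\|f_d(w,u)\|_H\leq b_2\|w\|_H$ with $b_1=0$, and the semigroup bound $M_S$, Gronwall's lemma yields $\sup_{t\in[0,T]}\|\phi_t^\gamma(z)\|_H\leq M_Se^{M_Sb_2T}\|v\|_H$ uniformly in $\gamma\in\mathcal{R}$ and $z=(v,d,h)\in\Upsilon$ --- this is exactly the estimate behind Theorem~\ref{THEOGENERATOR}b) and the proposition following Definition~\ref{RelaxedControlStrat}. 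Hence $b(\phi_t^\gamma(z))\leq\|V_{ref}\|_H^2+M_S^2e^{2M_Sb_2T}\|v\|_H^2+1\leq c_\phi\,b(v)$ with $c_\phi=\max(1,M_S^2e^{2M_Sb_2T})$. What makes (iii) hold with a purely multiplicative constant is precisely that $f_d$ has linear growth with zero additive constant ($b_1=0$).

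Having a bounding function, Remark~\ref{remarkCost} allows us to apply Theorem~\ref{TheoremExistenceOptimalControl}: the value function is continuous on $\Upsilon$ and there exists an optimal relaxed strategy $\mu^*\in\mathcal{A}^{\mathcal{R}}$. It then remains to check the convexity assumptions \ref{HypOrdinaryControlExists}: $f_d$ is affine in $u$ (so the flow is unchanged under $\gamma\mapsto\bar\gamma:=\int_Z u\,\gamma(\mathrm{d}u)$), the running cost $u\mapsto K\|V_{ref}-v\|_H^2+u^2$ is convex in $u$, and one checks the required concavity of $\lambda_d$ and convexity of $\lambda_d\mathcal{Q}$ in $u$ (the latter reducing to a condition on $\lambda_d$ alone, since $\mathcal{Q}$ does not depend on $u$). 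Theorem~\ref{TheoExistenceOrdinaryOptimalControl} then gives that $\bar\mu_t:=\int_Z u\,\mu^*_t(\mathrm{d}u)\in\mathcal{A}$ is an optimal ordinary strategy, with $V_{\bar\mu}=\tilde V_{\mu^*}=V$ on $\Upsilon$. The only genuinely technical point in this program is condition~(iii) of the bounding-function verification, i.e.\ the control-uniform a priori bound on $\|\phi_t^\gamma(z)\|_H$; everything else is direct checking, the structural ingredients being the compactness of the heat semigroup (for \ref{hyp:L}) and the zero-constant linear growth of $f_d$ (for (iii)).
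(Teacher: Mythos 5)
Your proposal follows essentially the same route as the paper's own proof: that proof is nothing more than the verification that the example satisfies the hypotheses of Theorem \ref{TheoExistenceOrdinaryOptimalControl}, the only quantitative ingredient being the control-uniform flow bound (the paper states $||\phi_t^{\gamma}(z)||_H\leq e^{2T}||v||_H$, which is your Gronwall estimate with $M_S=1$, $b_1=0$, $b_2=2$). Your explicit check of conditions (i)--(iii) of Definition \ref{boundingFunction}, combined with Remark \ref{remarkCost} to bypass the fact that $c(v,u)=K||V_{\mathrm{ref}}-v||_H^2+u^2$ is not literally of the form \ref{hyp:cost}, is in fact a more careful treatment than the paper's terse assertion that \ref{hyp:cost} is straightforward. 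One minor point: you justify compactness of $S(t)$ by compact resolvent alone; by the paper's own Theorem \ref{ImmediateCompactness} this must be supplemented by immediate norm continuity (available here since the heat semigroup is analytic), whereas the paper sidesteps the issue by exhibiting $S(t)$ as the explicit diagonal, Hilbert--Schmidt operator in the sine basis.

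The one step you defer --- ``one checks the required concavity of $\lambda_d$'' --- is precisely the step that does not survive inspection: $u\mapsto\lambda_d(v,u)$ contains the additive term $u^2$ and is therefore convex, not concave, in the control variable, so the first half of assumption \ref{HypOrdinaryControlExists}(b) fails as literally stated and Theorem \ref{TheoExistenceOrdinaryOptimalControl} cannot be invoked verbatim (its proof uses concavity of $\lambda_d$ to get $\chi_t^{\gamma}(z)\geq\chi_t^{\bar{\gamma}}(z)$, and with a convex rate this inequality is reversed, making the comparison of the running-cost terms inconclusive). The paper's proof has exactly the same lacuna, declaring \ref{HypOrdinaryControlExists} ``straightforward'' without checking it, so your attempt is faithful to the paper; but be aware that the part of the lemma you fully justify by the checks you actually perform is the continuity of the value function and the existence of an optimal \emph{relaxed} strategy via Theorem \ref{TheoremExistenceOptimalControl}, while the upgrade to an ordinary strategy rests on a convexity hypothesis that neither you nor the paper verifies and that, for this $\lambda_d$, is false as stated.
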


\begin{proof}
The proof consists in verifying that all assumptions of Theorem \ref{TheoExistenceOrdinaryOptimalControl} are satisfied. Assumptions \ref{hyp:Q}, \ref{hyp:cost} and \ref{HypOrdinaryControlExists} are straightforward. For $(v,u)\in H\times U$, $1/2\leq\lambda_1(v,u) \leq 2$ and  $e^{-1} \leq \lambda_1(v,u) \leq 2$.
The continuity in the variable $u$ is straightforward and the locally Lipschitz continuity comes from the fact that the functions $
v\to 1/({e^{-||v||^2}+1})$, and $v \to e^{-\beta(v)}$, with $\beta(v):=1/(||v||^2+1)$, are Fréchet differentiable with derivatives $v\to 2(v,\cdot)_H/(e^{-||v||^2}+1)^2$, and $v \to 2(v,\cdot)_H\beta^2(v)e^{-\beta(v)}.$

\noindent $-\Delta v : w\in V \rightarrow \int_0^1 v'(x)w'(x)\mathrm{d}x$ so that $-\Delta : V\to V^*$ is linear. Let $(v,w)\in V^2$.

\[
\langle -\Delta (v-w) , v-w \rangle  = \int_0^1 ((v-w)'(x))^2\mathrm{d}x \geq 0.
\]

\[
|\langle -\Delta v , w \rangle|^2 = |\int_0^1 v'(x)w'(x)\mathrm{d}x|^2 \leq \int_0^1 (v'(x))^2\mathrm{d}x \int_0^1 (w'(x))^2\mathrm{d}x \leq ||v||^2_V ||w||^2_V,
\]

\noindent and so $||-\Delta v||_{V^*} \leq ||v||_V$. $\langle -\Delta v , v \rangle = \int_0^1 (v'(x))^2\mathrm{d}x \geq C'||v||^2_V$, for some constant $C'>0$, by the Poincaré inequality.

Now, define for $k\in\N^*$, $f_k(\cdot) := \sqrt{2}\sin(k\pi\cdot)$, a Hilbert base of $H$. 
%The Laplacian with zero Dirichlet boundary has spectral decomposition 
%
%\[
%\Delta v = - \sum_{k\geq 1} (k\pi)^2 (v,f_k)_Hf_k
%\]
%
%in $H$ for $v$ in the domain $\mathcal{D}_H = \{v\in H \mid \sum_{k\geq 1} h^4 (v,f_k)_H^2 < \infty\}$ and 
%
%\[
%\Delta v = - \sum_{k\geq 1} (k\pi)^2 (v,e_k)_Ve_k
%\]
%
%in $V$ for $v$ in the domain $\mathcal{D}_V = \{v\in V \mid \sum_{k\geq 1} h^4 (v,e_k)_V^2 < \infty\}$.
On $H$, $S(t)$ is the diagonal operator 

\[
S(t)v = \sum_{k\geq 1} e^{-(k\pi)^2t} (v,f_k)_Hf_k.
\]

\noindent For $t>0$, $S(t)$ is a contracting Hilbert-Schmidt operator. 

\noindent For $(v,w,u)\in H^2\times U$, $f_d(v,u) = (d+u)v$ and

\[
||f_d(v,u)-f_d(w,u)||_H \leq 2 ||v-w||_H, \qquad ||f_d(v,u)||_H \leq 2||v||_H.
\]

This means that for every $z=(v,d,h)\in\Upsilon$, $\gamma\in\mathcal{R}([0,T],U)$ and $t\in [0,T]$, $||\phi_t^{\gamma}(z) ||_H \leq e^{2T}||v||_H$.

\end{proof}

\section{Application to the model in Optogenetics}\label{OptoSection}
\subsection{Proof of Theorem \ref{ChR2TheoremExistenceOptimalControl}}\label{ProofTheoSubsec}

We begin this section by making some comments on Definition \ref{stoInfiniteHH}. In (\ref{PDEHHsto}), $C_m>0$ is the membrane capacitance and $V_-$ and $V_+$ are constants defined by $V_-:=\min\{V_{Na},V_K,V_L,\\ V_{ChR2}\}$ and $V_+:=\max\{V_{Na},V_K,V_L,V_{ChR2}\}$. They represent the physiological domain of our process. In (\ref{fdHH}), the constants $g_x > 0$ are the normalized conductances of the channels of type $x$ and $V_x\in\R$ are the driving potentials of the channels. The constant $\rho>0$ is the relative conductance between the open states of the $ChR2$. For a matter of coherence with the theoretical framework presented in the paper, we will prove Theorem \ref{ChR2TheoremExistenceOptimalControl} for the mollification of the model that we define now. This model is very close to the one of Definition \ref{stoInfiniteHH}. It is obtained by replacing the Dirac masses $\delta_z$ by their mollifications $\xi^N_z$ that are defined as follows. Let $\varphi$ be the function defined on $\mathbb{R}$ by

\begin{equation}
\varphi(x):= \left \{
\begin{aligned}
& Ce^{\frac{1}{x^2-1}}, & \text{ if } |x|< 1,\\
& 0 , & \text{ if } |x| \geq 1,
\end{aligned}
\right .
\end{equation}		
with $C:=\left(\int_{-1}^1 \exp\left(\frac{1}{x^2-1}\right) \mathrm{d}x\right)^{-1}$ such that $\int_{\mathbb{R}}\varphi(x)\mathrm{d}x = 1$. 

Now, let $U_N:=\left(\frac{1}{2N},1-\frac{1}{2N}\right)$ and $\varphi_N(x) := 2N \varphi(2Nx)$ for $x\in\mathbb{R}$. For $z\in I_N$, the $N^{\mathrm{th}}$ mollified Dirac mass $\xi^N_z$ at $z$ is defined for $x\in [0,1]$ by

\begin{equation}
\xi^N_z(x):= \left \{
\begin{aligned}
& \varphi_N(x-z), & \text{ if } x\in U_N\\
& 0 , & \text{ if } x\in [0,1]\setminus U_N.
\end{aligned}
\right .
\end{equation}	

For all $z\in I_N, \xi^N_z \in C^{\infty}([0,1])$ and $\xi^N_z \rightarrow \delta_z$ almost everywhere in $[0,1]$ as $N\rightarrow +\infty$, so that $(\xi^N_z,\phi)_H \to \phi(z)$, as $N\to \infty$ for every $\phi\in C(I,\R)$. The expressions $v(i/N)$ in Definition \ref{stoInfiniteHH}  are also replaced by $(\xi^N_{i/N},v)_H$. The decision to use the mollified Dirac mass over the Dirac mass can be motivated by two main reasons. First of all, as mentioned in \cite{Riedler}, the concentration of ions is homogeneous in a spatially extended domain around an open channel so the current is modeled as being present not only at the point of a channel, but in a neighborhood of it. Second, the smooth mollified Dirac mass leads to smooth solutions of the PDE and we need at least continuity of the flow. Nevertheless, the results of Theorem \ref{ChR2TheoremExistenceOptimalControl} remain valid with the Dirac masses and we refer the reader to Section \ref{VariationSection}.

The following lemma is a direct consequence of \cite[Proposition 7]{Riedler} and will be very important for the model to fall within the theoretical framework of the previous sections.

\begin{lemma}\label{modelChR2Bounded}
For every $y_0\in V$ with $y_0(x) \in [V_-,V_+]$ for all $x\in I$, the solution $y$ of (\ref{PDEHHsto}) is such that for $t\in [0,T]$,

\[
V_- \leq y(t,x) \leq V_+, \quad \forall x\in I. 
\]
\end{lemma}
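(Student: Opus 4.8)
The plan is to deduce the statement from \cite[Proposition 7]{Riedler}, after checking that the mollified equation (\ref{PDEHHsto}) fits its hypotheses. First I would rewrite the nonlinearity so as to display the driving potentials: for each $d\in D_N$ and $i\in I_N$, the coefficient of $\xi^N_{i/N}$ in $f_d(v)$ equals $G_i^d\big(\bar V_i^d-(\xi^N_{i/N},v)_H\big)$, where $G_i^d:=g_K\mathbf 1_{\{d_i=n_4\}}+g_{Na}\mathbf 1_{\{d_i=m_3h_1\}}+g_{ChR2}(\mathbf 1_{\{d_i=O_1\}}+\rho\,\mathbf 1_{\{d_i=O_2\}})+g_L\ge g_L>0$ and $\bar V_i^d$ is the convex combination of $V_{Na},V_K,V_{ChR2},V_L$ with weights proportional to these conductances; in particular $\bar V_i^d\in[V_-,V_+]$ by definition of $V_\pm$. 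I would then record the two elementary facts about the mollifiers that neutralize the nonlocal coupling as far as signs are concerned: $\xi^N_{i/N}\ge 0$ on $I$, and $(\xi^N_{i/N},\mathbf 1)_H=1$ for every $i\in I_N$ (the support of $\varphi_N(\cdot-i/N)$ is contained in $\overline{U_N}$ precisely when $1\le i\le N-1$, so no truncation occurs). Consequently, whenever $v$ takes its values in $[V_-,V_+]$ a.e., each average $(\xi^N_{i/N},v)_H$ lies in $[V_-,V_+]$, so the nonlinear field $f_d$ points inward on the boundary of the physiological band in the sense required in \cite{Riedler}.

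Next I would verify the remaining structural hypotheses: $\frac{1}{C_m}\Delta$ with $\mathcal D(\Delta)=V=H_0^1(I)$ generates the Dirichlet heat semigroup on $H=L^2(I)$, which is positivity-preserving and sub-Markovian; the Dirichlet boundary value $0$ lies in $[V_-,V_+]$ — which is automatic under the hypothesis, since $y_0\in V$ with $y_0(x)\in[V_-,V_+]$ forces $0=y_0(0)\in[V_-,V_+]$; and $f_d:H\to H$ is affine and bounded (the maps $v\mapsto(\xi^N_{i/N},v)_H$ are bounded linear functionals), hence globally Lipschitz, so (\ref{PDEHHsto}) has a unique mild solution $y\in C([0,T];H)$ with $y(t,\cdot)\in V\hookrightarrow C(\overline I)$ for $t>0$, which makes the pointwise bound meaningful. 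With these verifications in hand, \cite[Proposition 7]{Riedler} applies and yields $V_-\le y(t,x)\le V_+$ for all $(t,x)\in[0,T]\times I$.

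The delicate point — and the reason the argument goes through a citation rather than a one-line maximum-principle computation — is the nonlocality of the reaction term: $f_d(v)(x)$ depends on the mollified averages $(\xi^N_{i/N},v)_H$ rather than on the pointwise value $v(x)$, and it is affine \emph{decreasing} in those averages, so neither a pointwise maximum principle nor a naive invariant-rectangle/comparison argument closes (a Gronwall estimate on $\|(v-V_+)^+\|_H^2+\|(V_--v)^+\|_H^2$ only produces linear-in-time growth, not identically zero). What \cite[Proposition 7]{Riedler} supplies is precisely a comparison/truncation argument tailored to this nonlocal semilinear parabolic structure; the work here is therefore entirely in matching our $f_d$ to its hypotheses — the sign/convexity condition on the driving potentials and the normalization $\xi^N_{i/N}\ge 0$, $(\xi^N_{i/N},\mathbf 1)_H=1$ of the mollified Dirac masses.
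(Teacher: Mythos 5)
Your proposal is correct and takes essentially the same route as the paper, which likewise obtains the lemma directly as a consequence of \cite[Proposition 7]{Riedler} without further argument. Your added verifications --- the rewriting of the reaction term as $G_i^d\big(\bar V_i^d-(\xi^N_{i/N},v)_H\big)$ with $G_i^d\ge g_L>0$ and $\bar V_i^d\in[V_-,V_+]$, together with the facts $\xi^N_{i/N}\ge 0$ and $(\xi^N_{i/N},\mathbf{1})_H=1$ --- are accurate and simply make explicit why the cited proposition applies to the mollified model.
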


Physiologically speaking, we are only interested in the domain $[V_-,V_+]$. Since Lemma \ref{modelChR2Bounded} shows that this domain is invariant for the controlled PDMP, we can modify the characteristics of the PDMP outside the domain $[V_-,V_+]$ without changing its dynamics. We will do so for the rate functions $\sigma_{x,y}$ of Table \ref{jumprates}. From now on, consider a compact set $K$ containing the closed ball of $H$, centered in zero and with radius $\max(V_-,V_+)$. We will rewrite $\sigma_{x,y}$ the quantities modified outside $K$ such that they all become bounded functions. This modification will enable assumption \ref{hyp:lambda}1. to be verified. 

The next lemma shows that the stochastic controlled infinite-dimensional Hodgkin-Huxley-ChR2 model defines a controlled infinite-dimensional PDMP as defined in Definition \ref{ControlStrat} and that Theorem \ref{THEOGENERATOR} applies.

\begin{lemma}\label{HHChR2Bounded}
For $N\in N^*$, the $N^{\mathrm{th}}$ stochastic controlled infinite-dimensional Hodgkin-Huxley-ChR2 model satisfies assumptions \ref{hyp:lambda}, \ref{hyp:Q}, \ref{hyp:L} and \ref{hyp:f}. Moreover, for any control strategy $\alpha\in\mathcal{A}$, the membrane potential $v^\alpha$ satisfies 

\[
V_- \leq v_t^{\alpha}(x) \leq V_+, \quad \forall (t,x)\in [0,T]\times I. 
\]

\end{lemma}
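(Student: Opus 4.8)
The plan is to verify, one by one, that the local characteristics of the $N^{\mathrm{th}}$ mollified Hodgkin–Huxley–ChR2 model fulfil each of \ref{hyp:lambda}, \ref{hyp:Q}, \ref{hyp:L} and \ref{hyp:f}, and then to deduce the invariance of the physiological domain $[V_-,V_+]$ from Lemma \ref{modelChR2Bounded} together with Theorem \ref{THEOGENERATOR}. First I would treat the operator $L = -\frac{1}{C_m}\Delta$ with Dirichlet conditions on $I=[0,1]$ and $V=H_0^1(I)$, $H=L^2(I)$: linearity and monotonicity, the bound $\|Lx\|_{V^*}\le c_1\|x\|_V$, and coercivity $\langle Lx,x\rangle\ge c_2\|x\|_V^2$ are classical (the last one via the Poincaré inequality, exactly as in the elementary example of Section \ref{elementaryExample}); for \ref{hyp:L}4 one uses that the Dirichlet Laplacian on a bounded interval has compact resolvent and generates the heat semigroup $S(t)$, which is a contraction on $H$ and is compact for every $t>0$ because its eigenvalues $e^{-(k\pi)^2 t/C_m}$ decay and the eigenfunctions $\sqrt2\sin(k\pi\cdot)$ form an orthonormal basis — so $S(t)$ is even Hilbert–Schmidt. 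This gives $M_S=1$.

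Next I would check \ref{hyp:f} for the nonlinearities $f_d$ of (\ref{fdHH}) with the mollified Dirac masses $\xi_{i/N}^N\in H$. Each summand is of the form $(\text{constant}\cdot\mathbf 1_{\{d_i=\cdot\}})(V_x-(\xi^N_{i/N},v)_H)\,\xi^N_{i/N}$, i.e. an affine function of the real number $(\xi^N_{i/N},v)_H$ multiplied by the fixed vector $\xi^N_{i/N}\in H$. Since $v\mapsto (\xi^N_{i/N},v)_H$ is a bounded linear functional on $H$, the map $v\mapsto f_d(v)$ is affine and hence globally Lipschitz on $H$ with a constant depending only on $N$, the $g_x$, and $\|\xi^N_{i/N}\|_H$; this yields \ref{hyp:f}1 (and a fortiori \ref{hyp:f}'2 with explicit $b_1,b_2$). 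Because $f_d$ is affine and continuous from $H$ (strong) to $H$ (strong), it is in particular continuous into $H_w$ and compact on bounded sets as needed for \ref{hyp:f}3 / the use of Theorem \ref{theoPapageorgiou}. Here the control does not enter $f_d$ at all (the light only acts through the jump rates), so uniformity in $u\in U$ is automatic. For \ref{hyp:lambda} and \ref{hyp:Q} I would invoke the modification made just before the statement: the rate functions $\sigma_{x,y}(\cdot,u)$ of Table \ref{jumprates} are smooth, and after being altered outside the compact set $K$ (which contains the ball of radius $\max(|V_-|,|V_+|)$) they become bounded, with a strictly positive lower bound since the table entries are of the form positive-constant or $\varepsilon_i u(t)+\text{const}$ and one may add a small positive constant; this gives $\delta\le\lambda_d\le M_\lambda$, i.e. \ref{hyp:lambda}1. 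Continuity of $u\mapsto\sigma_{x,y}(r,u)$ is clear from the table and gives \ref{hyp:lambda}2; local Lipschitz continuity of $r\mapsto\sigma_{x,y}(r,u)$ uniformly in $u$, composed with the bounded linear map $v\mapsto(\xi^N_{i/N},v)_H$, gives \ref{hyp:lambda}3. Finally $\mathcal Q$ as defined in Definition \ref{stoInfiniteHH} is $\sigma_{d_i,y}(\cdot,u)/\lambda_d(\cdot,u)$, a ratio of the above functions with denominator bounded below, hence jointly continuous in $(v,u)$, which is \ref{hyp:Q}; and $\mathcal Q(\{d\}\mid v,d,u)=0$ by construction since the diagonal transition $x\to x$ is excluded.

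Having verified the four hypotheses, I would apply Theorem \ref{THEOGENERATOR} to conclude that the model is a well-defined controlled infinite-dimensional PDMP for every strategy $\alpha\in\mathcal A$. For the pointwise bound $V_-\le v_t^\alpha(x)\le V_+$: between two consecutive jumps the discrete component is frozen at some $d\in D_N$ and the continuous component solves (\ref{PDEHHsto}) with a fixed $d$ and a fixed measurable control rule; Lemma \ref{modelChR2Bounded} (a consequence of \cite[Proposition 7]{Riedler}) says that the slab $[V_-,V_+]$ is invariant for this PDE provided the initial datum lies in it. Since the initial condition $v_0$ satisfies $v_0(x)\in[V_-,V_+]$ by assumption, and since a jump of the PDMP changes only the discrete component $d$ and leaves the continuous component $v$ unchanged, the bound propagates across jumps; by induction on the (a.s.\ finitely many, by regularity from \ref{hyp:lambda}1) jump times $T_n$ on $[0,T]$, one gets $V_-\le v_t^\alpha(x)\le V_+$ for all $t\in[0,T]$ and all $x\in I$. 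The one point deserving care — and the mild obstacle of the proof — is that \cite[Proposition 7]{Riedler} / Lemma \ref{modelChR2Bounded} is stated for the PDE with the mollified source terms, so one must make sure the invariance argument (a maximum-principle / sub- and super-solution argument, using that at $v=V_\pm$ the reaction term $f_d$ points inward because each factor $V_x-(\xi^N_{i/N},v)_H$ has the right sign when $v\equiv V_\pm$) is not disturbed by having replaced $v(i/N)$ with the average $(\xi^N_{i/N},v)_H$; this is exactly the reason the mollification was introduced, and it works because $\xi^N_{i/N}\ge 0$ with integral $1$, so $(\xi^N_{i/N},v)_H$ is a convex average of values of $v$ and therefore also lies in $[V_-,V_+]$ whenever $v$ does. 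The modification of $\sigma_{x,y}$ outside $K$ is harmless for this step since the trajectory never leaves $K$.
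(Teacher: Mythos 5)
Your proposal is correct and follows essentially the same route as the paper: verify \ref{hyp:L} via the elementary example of Section \ref{elementaryExample}, get \ref{hyp:lambda}, \ref{hyp:Q} and \ref{hyp:f} from the smoothness of the $\sigma_{x,y}$, the modification outside the compact set $K$, and the Lipschitz estimate through the bounded linear maps $v\mapsto(\xi^N_{i/N},v)_H$, and then deduce the bounds $V_-\le v_t^{\alpha}(x)\le V_+$ from Lemma \ref{modelChR2Bounded} because the continuous component does not jump. Your additional remark that $(\xi^N_{i/N},v)_H$ is a convex average of values of $v$, so the invariance argument survives the mollification, is a detail the paper leaves implicit but is consistent with its argument.
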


\begin{proof}
The local Lipschitz continuity of $\lambda_d$ from $H\times Z$ in $\R^+$ comes from the local Lipschitz continuity of all the functions $\sigma_{x,y}$ of Table \ref{THEOGENERATOR} and the inequality $|(\xi^N_z,v)_H-(\xi^N_z,w)_H|\leq 2N||v-w||_H$. By Lemma \ref{modelChR2Bounded}, the modified jump rates are bounded. Since they are positive, they are bounded away from zero, and then, Assumption \ref{hyp:lambda} is satisfied. Assumption \ref{hyp:Q} is also easily satisfied. We showed in Section \ref{elementaryExample} that \ref{hyp:L} is satisfied. As for $f_d$, the function does not depend on the control variable and is continuous from $H$ to $H$. For $d\in D$ and $(y_1,y_2)\in H^2$, 

\begin{align*}
f_d(y_1) - f_d(y_2) &= \frac{1}{N} \sum_{i\in I_N} \Big( g_K\mathbf{1}_{\{d_i=n_4\}}+ g_{Na} \mathbf{1}_{\{d_i=m_3h_1\}}\\
& \qquad + g_{ChR2}( \mathbf{1}_{\{d_i=O_1\}}+\rho  \mathbf{1}_{\{d_i=O_2\}})+ g_L\Big)(\xi^N_{\frac{i}{N}},y_2-y_1)_H \xi^N_{\frac{i}{N}}.
\end{align*}

We then get

\[
||f_d(y_1) - f_d(y_2)||_H \leq 4N^2 ( g_K + g_{Na}+ g_{ChR2}( 1+\rho)+ g_L)||(y_2-y_1)||_H .
\]

Finally, since the continuous component $v_t^{\alpha}$ of the PDMP does not jump, the bounds are a direct consequence of Lemma \ref{modelChR2Bounded}.

\end{proof}

\begin{proof}[Proof of Theorem \ref{ChR2TheoremExistenceOptimalControl}]
In Lemma \ref{HHChR2Bounded} we already showed that assumptions  \ref{hyp:lambda}, \ref{hyp:Q}, \ref{hyp:L} and \ref{hyp:f} are satisfied. The cost function $c$ is convex in the control variable and norm quadratic on $H\times Z$. The flow does not depend on the control. The rate function $\lambda$ is linear in the control. the function $\lambda \mathcal{Q}$ is also linear in the control. We conclude that all the assumptions of Theorem \ref{TheoremExistenceOptimalControl}
are satisfied and that an optimal ordinary strategy can be retrieved.

\end{proof}

We end this section with an important remark that significantly extends the scope of this example. Up to now, we only considered stationary reference signals but nonautonomous ones can be studied as well, as long as they feature some properties. Indeed, it is only a matter of incorporating the signal reference $V_{\mathrm{ref}}\in C([0,T],H)$ in the process by adding a variable to the PDMP. Instead of considering $H$ as the initial state space for the continuous component, we consider $\tilde{H} := H\times H$.

This way, the part on the control problem is not impacted at all and we consider the continuous cost function $\tilde{c}$ defined for $(v,\bar{v},u)\in \tilde{H}\times U$ by 

\begin{equation}\label{HHcostFunctionTilde}
\tilde{c}(v,\bar{v},u) = \kappa ||v-\bar{v}||_H^2 + u + c_{\mathrm{min}},
\end{equation}

the result and proof of lemma \ref{ChR2TheoremExistenceOptimalControl} remaining unchanged with the continuous bounding function defined for $v\in H$ by 

\begin{equation*}
b(v) := \left\{
\begin{aligned}
& \kappa M_3^2 + \kappa\sup_{t\in [0,T]}||V_{ref}(t)||^2_H + u_{max},&\text{if  } ||v||_H\leq M_3,\\
&  \kappa||v||^2_H + \kappa\sup_{t\in [0,T]}||V_{ref}(t)||^2_H + u_{max},& \text{if  } ||v||_H > M_3.
\end{aligned}
\right.
\end{equation*}

In the next section, we present some variants of the model and the corresponding results in terms of optimal control.

\begin{table}[!h]
\begin{center}
\caption{Expression of the individual jump rate functions.}
\label{jumprates}
\begin{tabular}{l}
\hrulefill
\\
\underline{ In $D_1 = \{n_0,n_1,n_2,n_3,n_4\}$} : \\ [0.1cm]
\begin{tabular}{llll}
$\sigma_{n_0,n_1}(v,u) = 4\alpha_n(v)$, &$\sigma_{n_1,n_2}(v,u) = 3\alpha_n(v)$, &$\sigma_{n_2,n_3}(v,u) = 2\alpha_n(v)$, &$\sigma_{n_3,n_4}(v,u) = \alpha_n(v)$\\
$\sigma_{n_4,n_3}(v,u) = 4\beta_n(v)$, &$\sigma_{n_3,n_2}(v,u) = 3\beta_n(v)$, &$\sigma_{n_2,n_1}(v,u) = 2\beta_n(v)$, &$\sigma_{n_1,n_0}(v,u) = \beta_n(v)$.\\
\end{tabular}

\\[0.5cm]

%\hline

\underline{In $D_2 = \{m_0h_1,m_1h_1,m_2h_1,m_3h_1,m_0h_0,m_1h_0,m_2h_0,m_3h_0\}$} :\\[0.1cm]
\begin{tabular}{ll}
$\sigma_{m_0h_1,m_1h_1}(v,u)=\sigma_{m_0h_0,m_1h_0}(v,u) = 3\alpha_m(v)$, &$\sigma_{m_1h_1,m_2h_1}(v,u) =\sigma_{m_1h_0,m_2h_0}(v,u) = 2\alpha_m(v)$,\\ 
$\sigma_{m_2h_1,m_3h_1}(v,u) = \sigma_{m_2h_0,m_3h_0}(v,u) =  \alpha_m(v)$, 
&$\sigma_{m_3h_1,m_2h_1}(v,u) = \sigma_{m_3h_0,m_2h_0}(v,u) = 3\beta_m(v)$,\\
 $\sigma_{m_2h_1,m_1h_1}(v,u) =\sigma_{m_2h_0,m_1h_0}(v,u) = 2\beta_m(v)$, &$\sigma_{m_1h_1,m_0h_1}(v,u) = \sigma_{m_1h_0,m_0h_0}(v,u) = \beta_m(v)$.\\
%\hline
\end{tabular}

\\[0.7cm]

\underline{In $D_{ChR2} = \{o_1,o_2,c_1,c_2\}$} :\\[0.1cm]

\begin{tabular}{llll}
$\sigma_{c_1,o_1}(v,u) = \varepsilon_1 u $, &$\sigma_{o_1,c_1}(v,u) = K_{d1}$, &$\sigma_{o_1,o_2}(v,u) = e_{12} $, &$\sigma_{o_2,o_1}(v,u) = e_{21}$\\
$\sigma_{o_2,c_2}(v,u) = K_{d2} $, &$\sigma_{c_2,o_2}(v,u) = \varepsilon_2 u $, &$\sigma_{c_2,c_1}(v,u) = K_r$.&\\
\end{tabular}

\\[0.5cm]

\hrulefill

\\

\begin{tabular}{ll}
$\alpha_n(v)=\frac{0.1-0.01v}{e^{1-0.1v}-1}$, &$\beta_n(v)=0.125e^{-\frac{v}{80}}$,\\[0.3cm]
$\alpha_m(v)=\frac{2.5-0.1v}{e^{2.5-0.1v}-1}$, &$\beta_m(v)=4e^{-\frac{v}{18}}$,\\[0.3cm]
$\alpha_h(v)=0.07e^{-\frac{v}{20}}$, &$\beta_h(v)=\frac{1}{e^{3-0.1v}+1}$.\\[0.3cm]
\end{tabular}

\\[1cm]

\hrulefill
\end{tabular}
\end{center}
\end{table}

\subsection{Variants of the model}\label{VariationSection}

We begin this section by giving arguments showing that the results of Theorem \ref{TheoremExistenceOptimalControl} remain valid for the model of Definition \ref{stoInfiniteHH}, which does not exactly fits into our theoretical framework. Then, the variations we present concern the model of ChR2, the addition of other light-sensitive ionic channels, the way the control acts on the three local characteristics and the control space. The optimal control problem itself will remain unchanged. First of all, let us mention that since the model of Definition \ref{stoInfiniteHH} satisfies the convexity conditions \ref{HypOrdinaryControlExists}, the theoretical part on relaxed controls is not necessary for this model. Nevertheless, the model of ChR2 presented on Figure \ref{ChR2figure} is only one among several others, some of which do not enjoy a linear, or even concave, rate function $\lambda$. For those models, that we present next, assumption \ref{HypOrdinaryControlExists} fails and the relaxed controls are essential. 

We will not present them here, but the previous results for the Hodgkin-Huxley model remain straightforwardly unchanged for other neuron models such as the FitzHugh-Nagumo model or the Morris-Lecar model.

\paragraph{Optimal control for the original model}\hspace*{0pt}\\

In the original model, the function $f_d$ is defined from $V$ to $V^*$. Nevertheless, the semigroup of the Laplacian regularizes Dirac masses (see \cite[Lemma 3.1]{Austin}) and the uniform bound in Theorem \ref{THEOGENERATOR} is in fact valid in $V$, the solution belonging to $C([0,T],V)$. This is all we need since the control does not act on the PDE. This is why the domain of our process is $V\times D_N$ and not just $H\times D_N$, and all computations of the proofs of the previous sections can be done in the Hilbert space $V$. From this consideration, and using the continuous embedding of $H_0^1(I)$ in $C_0(I)$ we can justify the local Lipschitz continuity of $\lambda_d$ from $V\times Z$ in $\R^+$. Indeed, it comes from the local Lipschitz continuity of all functions $\sigma_{x,y}$ of Table \ref{jumprates} and from the inequality 

\[
|v(\frac{i}{N})-w(\frac{i}{N})| \leq \sup_{x\in I} |v(x)-w(x)| \leq C ||v-w||_V.
\]

Finally, \cite[Proposition 5]{Riedler} states that the bounds of Lemma \ref{HHChR2Bounded} remain valid with Dirac masses. 

\paragraph{Modifications of the ChR2 model}\hspace*{0pt}\\

We already mentioned the paper of Nikolic and al. \cite{Nikolic} in which a three states model is presented. It is a somehow simpler model that the four states model of Figure \ref{ChR2figure} but it gives good qualitative results on the photocurrents produced by the ChR2.  In first approximation the model can be considered to depend linearly in the control as seen on Figure \ref{ChR23StatesLinear}.

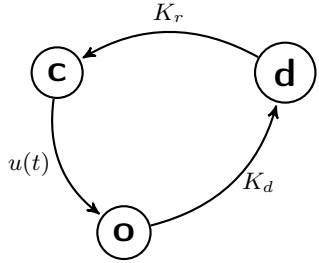
\begin{figure}[!h]
\begin{center}
\begin{tikzpicture}[->,>=stealth',shorten >=1pt,auto,node distance=3cm,
  thick,main node/.style={circle,draw,font=\sffamily\Large\bfseries}]

  \node[main node] (1) {c};
  \node[main node] (2) [right of=1] {d};
  \node[main node] (3) [below left of=2] {o};

  \path[every node/.style={font=\sffamily\small}]
    (1) edge [bend right] node [left] {$u(t)$} (3)
    (2) edge  [bend right] node [above]  {$K_r$} (1)
    (3) edge [bend right] node [right] {$K_d$} (2);

\end{tikzpicture}
\end{center}
\caption{Simplified ChR2 three states model}
\label{ChR23StatesLinear}
\end{figure}

This model features one open state $o$ and two closed states, one light-adapted $d$ and one dark-adapted $c$. This model would lead to the same type of model as in the previous Section. In fact, the time constants $1/K_d$ and $1/K_r$ are also light dependent with a dependence in $\log(u)$. The corresponding model is represented on Figure \ref{ChR23StatesLog} below

\begin{figure}[!h]
\begin{center}
\begin{tikzpicture}[->,>=stealth',shorten >=1pt,auto,node distance=3cm,
  thick,main node/.style={circle,draw,font=\sffamily\Large\bfseries}]

  \node[main node] (1) {c};
  \node[main node] (2) [right of=1] {d};
  \node[main node] (3) [below left of=2] {o};

  \path[every node/.style={font=\sffamily\small}]
    (1) edge [bend right] node [left] {$c_1u(t)$} (3)
    (2) edge  [bend right] node [above]  {$K_r + c_2\log(u)$} (1)
    (3) edge [bend right] node [right] {$\frac{1}{\tau_d-\log(u)}$} (2);

\end{tikzpicture}
\end{center}
\caption{ChR2 three states model}
\label{ChR23StatesLog}
\end{figure}
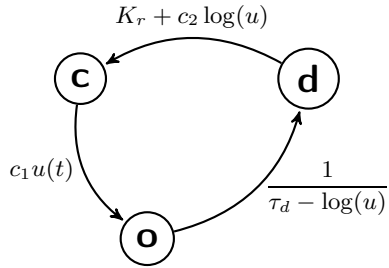

Some mathematical comments are needed here. On Figure \ref{ChR23StatesLog}, the control $u$ represents the light intensity and $c_1$, $c_2$, $K_r$ and $\tau_d$ are positive constants. This model of ChR2 is experimentally accurate for intensities between $10^8$ and $10^{10}$ $\mu\mathrm{m}^2\cdot\mathrm{s}^{-1}$ approximately. We would then consider $U:=[0,u_{max}]$ with $u_{max}\simeq 10^{10}$ $\mu\mathrm{m}^2\cdot\mathrm{s}^{-1}$. Furthermore, 

\[
\lim_{u\to 0} K_r + c_2\log(u) = -\infty, \qquad \lim_{u\to 0} \frac{1}{\tau_d-\log(u)} = 0.
\]

The first limit is not physical since rate jumps between states are positive numbers. The second limit is not physical either because it would mean that, in the dark, the proteins are trapped in the open state $o$, which is not the case. In the dark, when $u=0$, the jump rates corresponding to the transition $o\to d$ and $d\to c$ are positive constants. For this reason, the functions $\sigma_{o,d}$ and $\sigma_{d,c}$ should be smooth functions such that they are equal to the rates of Figure \ref{ChR23StatesLog} for large intensities, but still with $\tau_d-\log(u) >0$, and converge to $K_d^{\mathrm{dark}}>0$ and $K_r^{\mathrm{dark}}>0$ respectively, when $u$ goes to $0$. The resulting rate function $\lambda$ is not concave and thus does not satisfy assumption \ref{HypOrdinaryControlExists} anymore. We can only affirm the existence of optimal relaxed strategies.

The four states model of Figure \ref{ChR2figure} is also an approximation of a more accurate model that we represent on Figure \ref{ChR24StatesLog} below. The transition rates can depend on either the membrane potential $v$ or the irradiance $u$, which is the control variable. The details of the model and the numerical constants can be found in \cite{ChR2Modelling}. Note that the model of Figure \ref{ChR24StatesLog} is already an approximation of the model in \cite{ChR2Modelling} because the full model in \cite{ChR2Modelling} would not lead to a Markovian behavior for the ChR2 (the transition rates would depend on the time elapsed since the light was switched on).

\begin{figure}[!t]
\begin{center}
\begin{tikzpicture}[->,>=stealth',shorten >=1pt,auto,node distance=3cm,
  thick,main node/.style={circle,draw,font=\sffamily\Large\bfseries}]

  \node[main node] (1) {$o_1$};
  \node[main node] (2) [right of=1] {$o_2$};
  \node[main node] (3) [below of=2] {$c_2$};
  \node[main node] (4) [below of=1] {$c_1$};

  \path[every node/.style={font=\sffamily\small}]
    (1) edge [bend left] node [right] {$K_{d1}(v)$} (4)
        edge [bend right] node [above] {$e_{12}(u)$} (2)
    (2) edge [bend right] node [above] {$e_{21}(u)$} (1)
        edge [bend right] node[right] {$K_{d2}$} (3)
    (3) edge [bend right] node [right] {$K_{a2} u$} (2)
        edge node {$K_r(v)$} (4)
    (4)edge [bend left] node[left] {$K_{a1} u$} (1);

\end{tikzpicture}
\caption{ChR2 channel : $K_{a1}$, $K_{a2}$, and $K_{d2}$ are positive constants.}\label{ChR24StatesLog}
\end{center}
\end{figure}
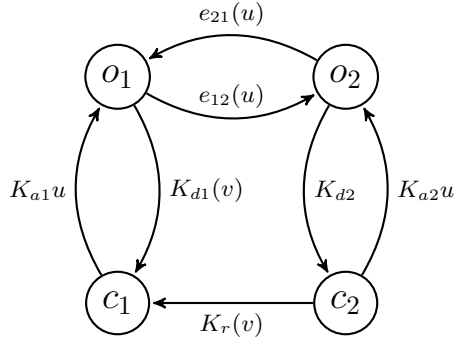

\begin{align*}
K_{d1}(v)&=K_{d1}^{(1)}-K_{d1}^{(2)}\tanh((v+20)/20),\\
e_{12}(u)&=e_{12d}+c_1\ln(1+u/c),\\
e_{21}(u)&=e_{21d}+c_2\ln(1+u/c),\\
K_r(v)&=K_r^{(1)}\exp(-K_r^{(2)}v),\\
\end{align*}

 with $K_{d1}^{(1)}$, $K_{d1}^{(2)}$, $e_{12d}$, $e_{21d}$, $c$, $c_1$ and $c_2$ positive constants. As for the model of Figure \ref{ChR23StatesLog}, the mathematical definition of the function $\sigma_{o_1,c_1}$ should be such that it is a positive smooth function and equals $K_{d1}(v)$ in some subset of the physiological domain $[V_-,V_+]$. The resulting rate function $\lambda$ will be concave but the function $\lambda\mathcal{Q}$ will not be convex (it will be concave as well). Hence, Assumption \ref{HypOrdinaryControlExists} is not satisfied.

\paragraph{Addition of other light-sensitive ion channels}\hspace*{0pt}\\

Channelrhodopsin-2 has a promoting role in eliciting action potentials. There also exists a chlorine pump, called Halorhodopsin (NpHR), that has an inhibitory action. NpHR can be used along  with ChR2 to obtain a control in both directions. Its modelisation as a multistate model was considered in \cite{NikolicNpHr}. The transition rates between the different states have the same shape that the ones of the ChR2 and the same simplifications are possible. This new light-sensitive channel can be easily incorporated in our stochastic model and we can state existence of optimal relaxed and/or ordinary control strategies depending on the level of complexity of the NpHR model we consider. It is here important to remark that since the two ionic channels do not react to the same wavelength of the light, the resulting control variable would be two-dimensional with values in $[0,u_{max}]^2$. This would not change the qualitative results of the previous sections.

\paragraph{Modification of the way the control acts on the local characteristics}\hspace*{0pt}\\

Up to now, the control acts only on the rate function, and also on the measure transition via its special definition from the rate function. Nevertheless, we can present here a modification of the model where the control acts linearly on the PDE. This modification amounts to considering that the control variable is directly the gating variable of the ChR2. Indeed, we show in \cite{MinimalTime} that the optimal control of the deterministic counterpart of the stochastic Hodgkin-Huxley-ChR2 model, in finite dimension and with the three states ChR2 model of Figure \ref{ChR23StatesLinear}, is closely linked to the optimal control of 

\begin{equation*}
\left\{
\begin{aligned}
\frac{\mathrm{d}V}{\mathrm{d}t} & =  g_Kn^4(t)(V_K-V(t)) + g_{Na}m^3(t)h(t)(V_{Na}-V(t))\\ 
& \qquad +g_{ChR2}u(t)(V_{ChR2}-V(t)) + g_L(V_L-V(t)),\\ 
\frac{\mathrm{d}n}{\mathrm{d}t} & =  \alpha_n(V(t))(1-n(t)) - \beta_n(V(t)) n(t),\\
\frac{\mathrm{d}m}{\mathrm{d}t} & =  \alpha_m(V(t))(1-m(t)) - \beta_m(V(t)) m(t),\\
\frac{\mathrm{d}h}{\mathrm{d}t} & =  \alpha_h(V(t))(1-h(t)) - \beta_h(V(t)) h(t),\\
\end{aligned}
\right.
\end{equation*}

where the control variable is the former gating variable $o$. Now the stochastic counterpart of the last model is such that the function $f_d$ is now linear in the control and the rate function $\lambda$ and the transition measure function $\mathcal{Q}$ do not depend on the control any more. Finally, by adding NpHR channels to this model, we would obtain a fully controlled infinite-dimensional PDMP in the sense that the control would then act on the three local characteristics of the PDMP. Depending on the model of NpHR chosen, we would obtain relaxed or ordinary optimal control strategy.

\paragraph{Modification of the control space}\hspace*{0pt}\\

In all models discussed previously, the control has no spatial dependence. Any light-stimulation device, such as a laser, has a spatial resolution and it is possible that we do not want or cannot stimulate the entire axon. For this reason, spatial dependence of the control should be considered. Now, as long as the control space remains a compact Polish space, spatial dependence of the control could be considered. We propose here a control space defined as a subspace of the Skorohod space $\mathbb{D}$, constituted of the \textit{càdlàg} functions from $[0,1]$ to $\R$. This control space represents the aggregation of multiple laser beams that can be switched on and off. Suppose that each of theses beams produces on the axon a disc of light of diameter $r > 0$ that we call spatial resolution of the light. For an axon represented by the segment $[0,1]$, $r$ is exactly the spatial domain illuminated. We consider now two possibilities for the control space. Suppose first that the spatial resolution is fixed and define $p:=\lfloor\frac{1}{r}\rfloor$ and 

\[
\mathcal{U} := \{ u : [0,1] \rightarrow [0,u_{max}] \mid u \text{ is constant on } [i/p,(i+1)/p), i=0,..,p-1, u(1) = u((p-1)/p)\}.
\]

\begin{lemma}
$\mathcal{U}$ is a compact subset of $\mathbb{D}$.
\end{lemma}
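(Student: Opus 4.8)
The plan is to identify $\mathcal{U}$ with a closed and bounded subset of a finite-dimensional Euclidean space, and then invoke the fact that the embedding of such a set into the Skorohod space $\mathbb{D}$ is a homeomorphism onto its image, so compactness transfers. First I would note that every $u\in\mathcal{U}$ is completely determined by the $p$-tuple of its values on the successive intervals $[i/p,(i+1)/p)$, i.e.\ by the vector $(u_0,\dots,u_{p-1})\in[0,u_{max}]^p$, with $u(1)=u_{p-1}$ imposed by definition. Conversely every such vector yields a step function in $\mathcal{U}$. Hence the map $\Phi:[0,u_{max}]^p\to\mathcal{U}\subset\mathbb{D}$ sending $(u_0,\dots,u_{p-1})$ to the corresponding piecewise-constant càdlàg function is a bijection.

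Next I would check that $\Phi$ is continuous from $[0,u_{max}]^p$ (with its usual topology) to $\mathbb{D}$ (with the Skorohod $J_1$ topology). This is the routine part: if $(u_0^{(n)},\dots,u_{p-1}^{(n)})\to(u_0,\dots,u_{p-1})$ componentwise, then the corresponding step functions converge uniformly on $[0,1]$ (the jump locations $i/p$ are fixed, so no time-deformation is needed), and uniform convergence implies Skorohod convergence. Therefore $\Phi$ is continuous. Since $[0,u_{max}]^p$ is compact and $\mathbb{D}$ is Hausdorff, $\Phi$ is a homeomorphism onto its image $\mathcal{U}=\Phi([0,u_{max}]^p)$, which is therefore compact, hence closed, in $\mathbb{D}$.

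The only genuinely delicate point is making sure that the Skorohod topology on $\mathcal{U}$ really does coincide with the one coming from the finite-dimensional parametrization, i.e.\ that no two distinct parameter vectors get identified and that the $J_1$ topology is not strictly coarser than the product topology on this particular subset; but because all functions in $\mathcal{U}$ share the same finite fixed grid of possible discontinuity points $\{1/p,\dots,(p-1)/p\}$, any Skorohod-convergent sequence within $\mathcal{U}$ must in fact converge uniformly (the optimal time-changes can be taken to be the identity on the complement of shrinking neighborhoods of the grid), so the two topologies agree on $\mathcal{U}$. Granting this, compactness of $\mathcal{U}$ is immediate from compactness of $[0,u_{max}]^p$, and closedness follows since a compact subset of the metric space $\mathbb{D}$ is closed. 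This establishes the lemma.
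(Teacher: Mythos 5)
Your proof is correct and follows essentially the same route as the paper, which simply remarks that $\mathcal{U}$ is in bijection with the compact set $[0,u_{max}]^p$. You merely make explicit the continuity of the parametrization map (fixed jump locations, so componentwise convergence gives uniform and hence Skorohod convergence), which is the step the paper leaves implicit and which is indeed needed to transfer compactness.
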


\begin{proof} We tackle this proof by remarking that $\mathcal{U}$ is in bijection with the finite dimensional compact space $[0,u_{max}]^p$.
\end{proof}

 In this case, the introduction of the space $\mathbb{D}$ was quite artificial since the control space remains finite-dimensional. Nevertheless, the Skorohod space will be very useful for the other control space. Suppose now that the spatial resolution of the laser can evolve in $[r_{min},r_{max}]$ with $r_{min},r_{max} > 0$. Let $p\in\N^*$ the number of lasers used and define 
 
 \begin{align*}
\tilde{\mathcal{U}} := \{ u : [0,1] \rightarrow [0,u_{max}] & \mid \exists \{x_i\}_{0\leq i\leq p} \text{ subdivision of }[0,1],\\
& \qquad u \text{ is constant on } [x_i,x_i+1), i=0,..,p-1,\\
& \qquad u(1) = u(x_{p-1})\}.
\end{align*}

Now $\tilde{\mathcal{U}}$ is infinite-dimensional and the Skorohod space allows us to use the characterization of compact subsets of $\mathbb{D}$.

\begin{lemma}
$\tilde{\mathcal{U}}$ is a compact subset of $\mathbb{D}$.
\end{lemma}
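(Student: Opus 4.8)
The plan is to invoke the classical Arzelà--Ascoli-type characterization of relatively compact subsets of the Skorohod space: a set $A\subset\mathbb{D}$ is relatively compact for the $J_1$-topology if and only if $\sup_{u\in A}\sup_{t\in[0,1]}|u(t)|<\infty$ and $\lim_{\delta\downarrow0}\sup_{u\in A}w_u'(\delta)=0$, where $w_u'(\delta)$ is the càdlàg modulus of continuity, i.e. the infimum over partitions $0=t_0<t_1<\cdots<t_k=1$ with $\min_i(t_i-t_{i-1})>\delta$ of $\max_i\sup_{s,t\in[t_{i-1},t_i)}|u(s)-u(t)|$ (see e.g. Billingsley, \emph{Convergence of Probability Measures}). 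Throughout we use that the subdivision $\{x_i\}$ defining an element of $\tilde{\mathcal{U}}$ is subject to the resolution range, $r_{min}\le x_{i+1}-x_i\le r_{max}$ with $r_{min}>0$; only the lower bound will really be used.

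\textbf{Relative compactness.} The first condition is trivial, since every $u\in\tilde{\mathcal{U}}$ takes values in $[0,u_{max}]$, whence $\sup_{u\in\tilde{\mathcal{U}}}\|u\|_\infty\le u_{max}$. For the second, fix $u\in\tilde{\mathcal{U}}$ with breakpoints $0=x_0<x_1<\cdots<x_p=1$. Since $u$ is constant on each $[x_i,x_{i+1})$, the partition $\{x_i\}$ realizes $\max_i\sup_{s,t\in[x_i,x_{i+1})}|u(s)-u(t)|=0$, while its smallest gap is $\ge r_{min}$; hence $w_u'(\delta)=0$ as soon as $\delta<r_{min}$, uniformly in $u$. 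Therefore $\sup_{u\in\tilde{\mathcal{U}}}w_u'(\delta)\to0$ as $\delta\downarrow0$, and $\tilde{\mathcal{U}}$ is relatively compact in $\mathbb{D}$.

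\textbf{Closedness.} Since $\mathbb{D}$ with $J_1$ is a Polish space, it suffices to show $\tilde{\mathcal{U}}$ is closed. Let $(u_n)\subset\tilde{\mathcal{U}}$ with $u_n\to u$ in $\mathbb{D}$, and write $0=x_0^n<\cdots<x_p^n=1$ for the breakpoints of $u_n$ and $v_0^n,\dots,v_{p-1}^n\in[0,u_{max}]$ for its values. The vectors $(x_0^n,\dots,x_p^n)$ lie in the compact set $\{0=y_0\le\cdots\le y_p=1:\ r_{min}\le y_{i+1}-y_i\le r_{max}\}$ and the value vectors lie in $[0,u_{max}]^p$, so along a subsequence $x_i^n\to x_i$ and $v_i^n\to v_i$, the limit breakpoints still satisfying $r_{min}\le x_{i+1}-x_i\le r_{max}$. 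Let $u^\star\in\tilde{\mathcal{U}}$ be the step function with breakpoints $(x_i)$ and values $(v_i)$. Taking $\lambda_n$ to be the piecewise-linear homeomorphism of $[0,1]$ fixing $0$ and $1$ with $\lambda_n(x_i)=x_i^n$ for all $i$, one checks $\|\lambda_n-\mathrm{id}\|_\infty\to0$, $\|\log\lambda_n'\|_\infty\to0$ (the slope on the $i$-th piece equals $(x_{i+1}^n-x_i^n)/(x_{i+1}-x_i)\to1$, all numerators and denominators being in $[r_{min},r_{max}]$), and $\|u_n\circ\lambda_n-u^\star\|_\infty=\max_i|v_i^n-v_i|\to0$; hence $u_n\to u^\star$ in $\mathbb{D}$, so $u=u^\star\in\tilde{\mathcal{U}}$ by uniqueness of limits. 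Thus $\tilde{\mathcal{U}}$ is closed, and being also relatively compact it is a compact subset of $\mathbb{D}$.

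Equivalently, one may package the argument as follows: $\tilde{\mathcal{U}}=\Phi(K)$ with $K:=\{(v,x)\in[0,u_{max}]^p\times[0,1]^{p+1}:\ x_0=0,\ x_p=1,\ r_{min}\le x_{i+1}-x_i\le r_{max}\}$ compact and $\Phi(v,x):=\sum_{i=0}^{p-1}v_i\mathbf{1}_{[x_i,x_{i+1})}+v_{p-1}\mathbf{1}_{\{1\}}$ continuous into $(\mathbb{D},J_1)$, the continuity resting on exactly the same time-change estimate. The one delicate point is precisely this interplay between the $J_1$-metric and the jump locations: the hypothesis $r_{min}>0$ is essential, as it is what prevents the jump times of a convergent sequence from coalescing — without it $\tilde{\mathcal{U}}$ would fail even to be relatively compact, as is seen by letting two successive jump times tend to a common value.
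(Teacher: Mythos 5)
Your argument for the modulus condition is exactly the paper's: both invoke the Billingsley characterization of (relative) compactness in $\mathbb{D}$ via $w'_u(\delta)$, and both observe that for $\delta<r_{min}$ the defining subdivision of any $u\in\tilde{\mathcal{U}}$ is admissible and makes the oscillation on each subinterval zero, so $\sup_{u\in\tilde{\mathcal{U}}}w'_u(\delta)=0$. Where you go further is the closedness step: the criterion you and the paper cite (Billingsley, Theorem 14.3) only yields \emph{relative} compactness, and the paper stops there, leaving the passage from compact closure to compactness of $\tilde{\mathcal{U}}$ itself implicit. Your subsequence argument on the breakpoint and value vectors, together with the explicit piecewise-linear time changes $\lambda_n$ (with slopes controlled because the gaps are bounded below by $r_{min}$), supplies exactly this missing verification that $\tilde{\mathcal{U}}$ is $J_1$-closed, so your proof is the more complete one. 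Your alternative packaging $\tilde{\mathcal{U}}=\Phi(K)$, with $K$ a compact set of parameters and $\Phi$ continuous into $(\mathbb{D},J_1)$, is in fact a self-contained route that bypasses the Billingsley criterion altogether (continuous image of a compact set), and it is closer in spirit to the paper's treatment of the simpler space $\mathcal{U}$, which is handled there by a bijection with $[0,u_{max}]^p$. Your closing remark that $r_{min}>0$ is what prevents jump times from coalescing, and that without it even relative compactness fails, correctly identifies the one hypothesis doing real work in both proofs.
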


\begin{proof}
For this proof, we need to introduce some notation and a critera of compactness in $\mathbb{D}$. A complete treatment of the space $\mathbb{D}$ can be found in \cite{Billingsley}.

Let $u\in \mathbb{D}$ and $\{x_i\}_{0\leq i\leq n}$ a subdivision of $[0,1]$, $n\in \N^*$. We define, for $i\in\{0,..,n-1\}$,

\[
w_u([x_i,x_{i+1})) := \sup_{x,y\in [x_i,x_{i+1})} |u(x)-u(y)|,
\]
and for $\delta >0$,

\[
w'_u(\delta) := \inf_{\{x_i\}} \max_{0\leq i < n}w_u([x_i,x_{i+1})),
\]
the infimum being taken on all the subdivisions $\{x_i\}_{0\leq i\leq n}$ of $[0,1]$ such that $x_{i+1}-x_i > \delta$ for all $i\in\{0,..,n-1\}$.
Now since $\tilde{\mathcal{U}}$ is obviously bounded in $\mathbb{D}$, from \cite[Theorem 14.3]{Billingsley}, we need to show that 

\[
\lim_{\delta\to 0}\sup_{u\in\tilde{\mathcal{U}}} w'_u(\delta) = 0.
\]

Let $\delta > 0$ with $\delta < r_{min}$ and $u\in \tilde{\mathcal{U}}$. There exists as subdivision $\{x_i\}_{0\leq i\leq p}$ of $[0,1]$ such that for every $i\in \{0,..,p-1\}$, $u$ is constant on $[x_i,x_{i+1})$ and $x_{i+1}-x_i > \delta$. Thus $w'_u(\delta)=0$ which ends the proof.

\end{proof}

With either $\mathcal{U}$ or $\tilde{\mathcal{U}}$ as the control space, the stochastic controlled infinite-dimensional Hodgkin-Huxley-ChR2 model admits an optimal ordinary control strategy.

%\titleformat{\section}{\large\bfseries}{\appendixname~\thesection .}{0.5em}{}
\begin{appendices}
\section{Construction of $X^\alpha$ by iteration}\label{iteration}

Let $\alpha\in\mathcal{A}$ and let $x:=(v,d,\tau,h,\nu)\in\Xi^{\alpha}$ with $z:=(\nu,d,h)\in\Upsilon$. The existence of the probability $\mathbb{P}_x^{\alpha}$ below is the object of the next section where Theorem \ref{THEOGENERATOR} is proved. 
\begin{itemize}
\item Let $T_1$ be the time of the first jump of $(X^{\alpha}_t)$. With the notations of Proposition \ref{flowproperty}, the law of $T_1$ is defined by its survival function given for all $t>0$ by
\begin{equation*}
\mathbb{P}_x^{\alpha}(T_1>t) = \exp\left(-\int_0^t \lambda_{d}\Big( \phi_s^{\alpha}(x),\alpha(\nu_s,d_s,h_s)(\tau_s)\Big)\mathrm{d}s\right).
\end{equation*}

\bigskip

\item For $t<T_1$, $X_t^{\alpha}$ solves (\ref{PDEenlarged}) starting from $x$ namely $
(v_t, d_t , \tau_t,h_t, \nu_t)=( \phi_t^{\alpha}(x), d, \tau+t,h,\nu).$

\item When a jump occurs at time $T_1$, conditionally to $T_1$, $X_{T_1}^{\alpha}$ is a random variable distributed according to a measure $\hat{\mathcal{Q}}$ on $(\Xi,\mathcal{B}(\Xi))$, itself defined by a measure $\mathcal{Q}$ on $(D,\mathcal{B}(D))$. The target state $d_1$ of the discrete variable is a random variable distributed according to the measure $\mathcal{Q}(\cdot|\phi_{T_1}^{\alpha}(x),d_{T_1^-},\alpha(\nu_{T_1^-},d_{T_1^-},h_{T_1^-})(\tau_{T_1^-}))$ such that for all $B\in\mathcal{B}(D)$, 

\begin{align}
\hat{\mathcal{Q}}\Big(\{\phi_{T_1}^{\alpha}(x)\}\times B&\times \{0\} \times \{h+\tau_{T_1^-}\}\times\{\phi_{T_1}^{\alpha}(x)\}|\phi_{T_1}^{\alpha}(x),d_{T_1^-},\tau_{T_1^-},h_{T_1^-},\nu_{T_1^-},\alpha(T_1^-)\Big)\nonumber\\
&  = \mathcal{Q}\Big(B|\phi_{T_1}^{\alpha}(s),d,\alpha(\nu,d,h)(\tau+T_1)\Big),\nonumber
\end{align}
where we use the notation $\alpha(T_1^-)=\alpha(d_{T_1^-},\tau_{T_1^-},h_{T_1^-},\nu_{T_1^-})$. This equality means that the variables $v$ and $\nu$ do not jump at time $T_1$, and the variables $\tau$ and $h$ jump in a deterministic way to $\{0\}$ and $\{h+\tau_{T_1^-}\}$ respectively.
\item The construction iterates after time $T_1$ with the new starting point $(v_{T_1},d_{T_1},0,h+T_1,v_{T_1}$).
\end{itemize}

Formally the expressions of the jump rate and the transition measures on $\Xi$ are

\begin{align*}
\lambda(x,u) &:= \lambda_d(v,u),\\
\hat{\mathcal{Q}}\Big(F\times B \times E \times G \times J | x,u\Big) &:=\mathbf{1}_{F\times E\times G\times J}(v,0,h+\tau,\nu)\mathcal{Q}\Big(B|v,d,u\Big),
\end{align*}
with $F\times B \times E \times G \times J\in\mathcal{B}(\Xi)$, $u\in U$ and $x:=(v,d,\tau,h,\nu)\in\Xi$.

\section{Proof of Theorem \ref{THEOGENERATOR}}\label{proofTheorem1}

There are two filtered spaces on which we can define the enlarged process $(X^\alpha)$ of Definition \ref{ControlStrat}. They are linked by the one-to-one correspondence between the PDMP $(X^\alpha)$ and the included jump process $(Z^\alpha)$ that we define now. We then introduce both spaces since each one of them is relevant to prove useful properties.

\medskip
Given the sample path $(X_s^{\alpha},s\leq T)$ such that $X_0^{\alpha}:=(v,d,\tau,h,\nu)\in\Xi^{\alpha}$, the jump times $T_k$ of $X^{\alpha}$ can be retrieved by the formula 
\[
\{T_k, k=1,\dots,n\} = \{s\in(0,T]| h_{s}\neq h_{s^-}\}.
\]
Moreover we can associate to $X^{\alpha}$ a pure jump process $(Z_t^{\alpha})_{t\geq 0}$ taking values in $\Upsilon$ in a one-to-one correspondence as follows, 
%%%%%%%%%
%A jump process $(Z_t^{\alpha})_{t\geq 0}$ taking values in $\Upsilon :=D\times D(T,2)\times E$ can be associated to %$(X_t^{\alpha})_{t\geq 0}$ in a one-to-one correspondence as follows where we write $X_0^{\alpha}:=(v,d,\tau,h,\nu)\in\Xi^{\alpha}$, and $(T_k)$ %the sequence of jump times of $(X_t^{\alpha})_{t\geq 0}$\; :

\begin{equation}\label{associatedJumpProc}
Z_t^{\alpha} := (\nu_{T_k},d_{T_k},T_k), \qquad T_k\leq t<T_{k+1}.
\end{equation}

Conversely, given the sample path of $Z^{\alpha}$ on $[0,T]$ starting from $Z_0^{\alpha}=(\nu_0^{Z},d_0^{Z},T_0^{Z}) $, we can recover the path of $X^{\alpha}$ on $[0,T]$. Denote $Z_t^{\alpha}$ as $(\nu_t^{Z},d_t^{Z},T_t^{Z})$ and define $T_0: = T_0^Z$ and $T_k := \inf\{t>T_{k-1}|T_t^Z\neq T_{t^-}^Z\}$. Then

\begin{equation}\label{associatedPDMP}
\left\{
\begin{aligned}
X_t^{\alpha} &= (\phi_t^{\alpha}(Z_0^{\alpha}),d_0^{Z},t,T_0^{Z},\nu_0^{Z}),  & t<T_1,\\
X_t^{\alpha} &= (\phi_{t-T_k}^{\alpha}(Z_{T_k}^{\alpha}),d_{T_k},t-T_k,T_{T_k}^Z,\nu_{T_k}^Z),& T_k\leq t<T_{k+1}.
\end{aligned}
\right.
\end{equation}
Let us note that $ T_{T_k}^Z = T_k$ for all $k\in\mathbb{N}$, and that by construction of the PDMP all jumps are detected since $\mathbb{P}^{\alpha}[T_{k+1}=T_k]=0$. When no confusion is possible, we write, for $\alpha \in\mathcal{A}$ and $n\in\mathbb{N}$, $Z_n=Z^{\alpha}_{T_n}$.

%\begin{rem}
%\begin{itemize}
%\item[i)] The extension of the model to the one where the deterministic variable also jumps is quite straightforward. Basically, $\mathcal{Q}$ would be a measure on $(H\timesD,\mathcal{B}(H\timesD))$.
%\item[ii)] Since solutions to (\ref{PDE}2) are considered in the weak sense, care must be taken so that the definition of functions of these solutions (for example $\lambda$ and $\mathcal{Q}$) are invariant on equivalence classes. 
%\end{itemize}
%\end{rem}

\paragraph{Part 1. The canonical space of jump processes with values in $\Upsilon$.}
The following construction is very classical, see for instance Davis \cite{DavisBook} Appendix A1. We adapt it here to our peculiar process and to the framework of control. Remember that a jump process is defined by
% an element $\omega \in \Omega^{\Upsilon}$ as 
a sequence of inter-arrival times and jump locations
\begin{equation}
\omega = (\gamma_0,s_1,\gamma_1,s_2,\gamma_2,\dots), 
\end{equation}
where $\gamma_0\in \Upsilon$ is the initial position, and for $i\in\mathbb{N}^*$, $s_i$ is the time elapsed between the $(i-1)^{\mathrm{th}}$ and the $i^{\mathrm{th}}$ jump while $\gamma_i$ is the location right after the $i^{\mathrm{th}}$ jump. The jump times $(t_i)_{i\in\mathbb{N}}$ are deduced from the sequence $(s_i)_{i\in\mathbb{N}^*}$ by $t_0=0$ and $t_i=t_{i-1}+s_i$ for $i\in\mathbb{N}^*$ and the jump process $(J_t)_{t\geq0}$ is given by $J_t := \gamma_i$ for $t\in[t_i,t_{i+1})$ and $J_t=\Delta$ for $t\geq t_{\infty}:=\lim_{i\rightarrow \infty} t_i$, $\Delta$ being an extra state, called cemetery. 

Accordingly we introduce $Y^{\Upsilon} := (\mathbb{R}_+\times\Upsilon)\cup\{(\mathbb{R}_+\cup\infty,\Delta)\}$. Let $(Y_i^{\Upsilon})_{i\in\mathbb{N}^*}$ be a sequence of copies of the space $Y^{\Upsilon}$. We define 
%$\Omega_n^{\Upsilon} := \Upsilon\times\Pi_{i=1}^n Y_i^{\Upsilon}$ for $n\in\mathbb{N}^*$ and 
$\Omega^{\Upsilon} :=  \Upsilon\times\Pi_{i=1}^{\infty} Y_i^{\Upsilon}$ the canonical space of jump processes with values in $\Upsilon$, endowed with its Borel $\sigma$-algebra $\mathcal{F}^{\Upsilon}$ and the coordinate mappings on $\Omega^{\Upsilon}$ as follows
\begin{equation}
\left\{
\begin{aligned}
S_i :&\quad \Omega^{\Upsilon}& \longrightarrow &\mathbb{R}_+\cup \{\infty\}, \\
&\quad\omega &\longmapsto & S_i(\omega) = s_i,\quad \text{for } i\in \mathbb{N}^*,\\
\Gamma_i :&\quad \Omega^{\Upsilon}& \longrightarrow &\Upsilon\cup\{\Delta\}, \\
&\quad\omega &\longmapsto & \Gamma_i(\omega) = \gamma_i,\quad \text{for } i\in \mathbb{N}.
\end{aligned}
\right.
\end{equation}
We also introduce $\, \omega_i : \Omega^{\Upsilon}\rightarrow \Omega^{\Upsilon}_i$ for $i\in\mathbb{N}^*$, defined by 
\begin{equation*}
\omega_i(\omega):=(\Gamma_0(\omega),S_1(\omega),\Gamma_1(\omega),\dots,S_i(\omega),\Gamma_i(\omega))
\end{equation*}
for $\omega\in\Omega^{\Upsilon}$. Now for $\omega\in\Omega^{\Upsilon}$ and $i\in\mathbb{N}^*$, let 
\begin{align*}
T_0(\omega) &:=0,\\
T_i(\omega) &:=\left\{
\begin{aligned}
&\sum_{k=1}^i S_k(\omega),  &\text{if } S_k(\omega) \neq \infty \text{ and } \Gamma_k(\omega) \neq \Delta, k=1,\dots,i,\\ 
&\infty & \text{if }  S_k(\omega) = \infty \text{ or } \Gamma_k(\omega) = \Delta\text{ for some } k=1,\dots,i,
\end{aligned}
\right.\\
T_{\infty}(\omega) &:= \lim_{i\rightarrow\infty} T_i(\omega).
\end{align*}
and the sample path $(x_t(\omega))_{t\geq0}$ be defined by 
\begin{equation}
x_t(\omega) := \left\{
\begin{aligned}
\Gamma_i(\omega) &\qquad T_i(\omega)\leq t<T_{i+1}(\omega),& \\
\Delta & \qquad t\geq T_{\infty}(\omega).
\end{aligned}
\right.
\end{equation}
A relevant filtration for our problem is the natural filtration of the coordinate process $(x_t)_{t\geq 0}$ on $\Omega^{\Upsilon}$

\begin{equation*}
\mathcal{F}_t^{\Upsilon} := \sigma \{ x_s | s \leq t\},
\end{equation*}
for all $t\in\mathbb{R}_+$. For given starting point $\gamma_0\in\Upsilon$ and control strategy $\alpha\in\mathcal{A}$, a \textit{controlled probability measure}, denoted $\mathbb{P}^{\alpha}_{\gamma_0}$, is defined on $\Omega^{\Upsilon}$ by the specification of a family of controlled conditional distribution functions as follows: $\mu_1$ is a controlled probability measure on $(Y^{\Upsilon},\mathcal{B}(Y^{\Upsilon}))$ or equivalently a measurable mapping from $\mathcal{U}_{ad}([0,T];U)$ to the set of probability measures on $(Y^{\Upsilon},\mathcal{B}(Y^{\Upsilon}))$, such that for all $\alpha\in\mathcal{A}$,
\begin{equation*}
\mu_1(\alpha(\gamma_0);(\{0\}\times\Upsilon)\cup(\mathbb{R}_+\times\{\gamma_0\})) = 0.
\end{equation*}
For $i\in\mathbb{N}\setminus\{0,1\}$, $\mu_i : \Omega^{\Upsilon}_i\times \mathcal{U}_{ad}([0,T];U)\times \mathcal{B}(Y^{\Upsilon}) \rightarrow [0,1]$ are \textit{controlled transition measures} satisfying:

\begin{enumerate}
\item $\mu_i(\cdot;\Sigma)$ is measurable for each $\Sigma \in \mathcal{B}(Y^{\Upsilon})$,
\item $\mu_i(\omega_{i-1}(\omega),\alpha(\Gamma_{i-1}(\omega));\cdot)$ is a probability measure for every $\omega\in\Omega^{\Upsilon}$ and $\alpha\in\mathcal{A}$,
\item $\mu_i(\omega_{i-1}(\omega),\alpha(\Gamma_{i-1}(\omega));(\{0\}\times\Upsilon)\cup(\mathbb{R}_+\times\{\Gamma_{i-1}(\omega)\}))=0$ for every $\omega\in\Omega^{\Upsilon}$ and $\alpha\in\mathcal{A}$,
\item $\mu_i(\omega_{i-1}(\omega),\alpha(\Gamma_{i-1}(\omega));\{(\infty,\Delta)\})=1$ if $S_k(\omega) = \infty$ or $\Gamma_k(\omega) = \Delta$  for some $k\in \{1,\dots,i-1\}$, for every $\alpha\in\mathcal{A}$.
\end{enumerate}

We need to extend the definition of $\alpha\in\mathcal{A}$ to the state $(\infty,\Delta)$ by setting $\alpha(\Delta) := u_\Delta$ where $u_\Delta$ is itself an isolated cemetery  state and $\alpha$ takes in fact values in $\mathcal{U}_{ad}([0,T];U\cup\{u_\Delta\})$.

Now for a given control strategy $\alpha\in\mathcal{A}$, $\mathbb{P}_{\gamma_0}^{\alpha}$ is the unique probability measure on $(\Omega^{\Upsilon},\mathcal{T}^{\Upsilon})$ such that for each $i\in\mathbb{N}^*$ and bounded function $f$ on $\Omega^{\Upsilon}_i$

\begin{align*}
&\int_{\Omega^{\Upsilon}}f(\omega_i(\omega))\mathbb{P}_{\gamma_0}^{\alpha}(\mathrm{d}\omega) \\
& \qquad = \int_{Y^{\Upsilon}_1}\dots\int_{Y^{\Upsilon}_i}f(y_1,\dots,y_i)\mu_i(y_1,\dots,y_{i-1},\alpha(y_{i-1});\mathrm{d}y_i)\\
& \qquad \qquad \times \mu_{i-1}(y_1,\dots,y_{i-2},\alpha(y_{i-2});\mathrm{d}y_{i-1})\dots\mu_1(\alpha(\gamma_0);\mathrm{d}y_1),
\end{align*}
with $\alpha$ depending only on the variable in $\Upsilon$ when writing "$\alpha(y_{i-1})$" , $y_{i-1}=(s_{i-1},\gamma_{i-1})$. Let's now denote by $\mathcal{F}^{\Upsilon}_{\gamma,\alpha}$ and $(\mathcal{F}_t^{\Upsilon,\gamma,\alpha})_{t\geq0}$ the completed $\sigma$-fields of $\mathcal{F}^{\Upsilon}$ and $(\mathcal{F}_t^{\Upsilon})_{t\geq0}$ with all the $\mathbb{P}^{\alpha}_{\gamma}$-null sets of $\mathcal{F}^{\Upsilon}$. We then rename the intersection of these $\sigma$-fields redefine $\mathcal{F}^{\Upsilon}$ and $(\mathcal{F}_t^{\Upsilon})_{t\geq0}$ so that we have

\begin{equation*}
\mathcal{F}^{\Upsilon} := \bigcap_{\gamma\in\Upsilon \\ \alpha \in \mathcal{A}} \mathcal{F}^{\Upsilon}_{\gamma,\alpha},
\end{equation*}

\begin{equation*}
\mathcal{F}_t^{\Upsilon} :=  \bigcap_{\gamma\in\Upsilon \\ \alpha \in \mathcal{A}} \mathcal{F}_t^{\Upsilon,\gamma,\alpha} \text{ for all } t\geq 0.
\end{equation*}

Then ($\Omega^{\Upsilon},\mathcal{F}^{\Upsilon},(\mathcal{F}_t^{\Upsilon})_{t\geq0}$) is the natural filtered space of controlled jump processes.

\paragraph{Part 2. The canonical space of \textit{càdlàg} functions with values in $\Xi$.}

Let $\Omega_{\Xi}$ be the set of right-continuous functions with left limits (\textit{càdlàg} functions), defined on $\mathbb{R}_+$ with values in $\Xi$. Analogously to what we have done in Part 1, we can construct a filtered space ($\Omega^{\Xi},\mathcal{F}^{\Xi},(\mathcal{F}_t^{\Xi})_{t\geq0}$) with coordinate process $(x_t^{\Xi})_{t\geq0}$ and a probability $\mathbb{P}^{\alpha}$ on ($\Omega^{\Xi},\mathcal{F}^{\Xi}$) for every control strategy $\alpha\in\mathcal{A}$ such that the infinite-dimensional PDMP is a $\mathbb{P}^{\alpha}$-strong Markov process. For $(t,y)\in\mathbb{R}_+\times\Omega_{\Xi}, x_t^{\Xi}(y)=y(t)$.

We start with the definition of $\mathcal{F}_t^{\Xi,0} := \sigma\{x_s^{\Xi} | s\leq t\}$  for $t\in\mathbb{R}_+$ and $\mathcal{F}^{\Xi,0}:=\vee_{t\geq 0} \mathcal{F}_t^{\Xi,0} $. In Davis \cite{DavisBook} p 59, the construction of the PDMP is conducted on the Hilbert cube, the space of sequences of independent and uniformly distributed random variables in $[0,1]$. In the case of controlled PDMP, the survival function $F(t,x)$ in \cite{DavisBook} is replaced by the extension to $\xi^{\alpha}$ of $\chi^{\alpha}$ defined in Definition \ref{ControlStrat} and the construction depends on the chosen control. This extension is defined for $x:=(v,d,\tau,h,\nu)\in\Xi^{\alpha}$ by 

\[
\chi^{\alpha}_t(x) := \exp\left(-\int_0^t\lambda_d(\phi_s^{\alpha}(x),\alpha_{\tau+s}(\nu,d,h))\mathrm{d}s\right),
\]
such that for $z:=(v,d,h)\in\Upsilon$, $\chi_t^{\alpha}(z) = \chi_t^{\alpha}(v,d,0,h,v)$.

This procedure thus provides for each control $\alpha\in\mathcal{A}$ and starting point $x\in\Xi^{\alpha}$ a measurable mapping $\psi_x^{\alpha}$ from the Hilbert cube to $\Omega_{\Xi}$. Let $\mathbb{P}_x^{\alpha}:=\mathbb{P}\left[(\psi_x^{\alpha})^{-1}\right]$ denote the image measure of the Hilbert cube probability $\mathbb{P}$ under $\psi_x^{\alpha}$. 
Now for $x\in\Xi^\alpha$, let $\mathcal{F}_t^{x,\alpha}$ be the completion of $\mathcal{F}_t^{\Xi,0} $ with all $\mathbb{P}_x^{\alpha}$-null sets of $\mathcal{F}^{\Xi,0}$, and define
\begin{equation}
\mathcal{F}^{\Xi}_t := \bigcap_{ \alpha\in\mathcal{A}\\ ,x\in\Xi^\alpha} \mathcal{F}_t^{x,\alpha}.
\end{equation}
\begin{comment}
Now for any probability measure $\nu$ on $\Xi$, let $P^{\nu,\alpha}$ denote the measure on $(\Omega_{\Xi},\mathcal{F}^{\Xi,0})$ defined for all $A\in\mathcal{F}^{\Xi,0}$ by 

\begin{equation}
P^{\nu,\alpha}A := \int_{\Xi}\mathbb{P}_x^{\alpha}A\nu(\mathrm{d}x).
\end{equation}
Let $\mathcal{F}_t^{\nu,\alpha}$ be the completion of $\mathcal{F}_t^{\Xi,0} $ with all $P^{\nu,\alpha}$-null sets of $\mathcal{F}^{\Xi,0}$, and define

\begin{equation}
\mathcal{F}^{\Xi}_t := \bigcap_{\nu\in\mathcal{P}(\Xi)\\ \alpha\in\mathcal{A}} \mathcal{F}_t^{\nu,\alpha}.
\end{equation}
\end{comment}

The right-continuity of $(\mathcal{F}^{\Xi}_t)_{t\geq0}$ follows from the right-continuity of $(\mathcal{F}_t^{\Upsilon})_{t\geq 0}$ and the one-to-one correspondence. The right-continuity of $(\mathcal{F}_t^{\Upsilon})_{t\geq 0}$  is a classical result on right-constant processes. For theses reasons, we lose the superscripts $\Xi$ and $\Upsilon$ consider the natural filtration $(\mathcal{F}_t)_{t\geq 0}$ in the sequel.

Now that we have a filtered probability space that satisfies the \textit{usual conditions}, let us show that the simple Markov property holds for $(X_t^\alpha)$. Let $\alpha\in\mathcal{A}$ be a control strategy, $s>0$ and $k\in\mathbb{N^*}$. By construction of the process $(X_t^{\alpha})_{t\geq0}$, 

\begin{align*}
\mathbb{P}^{\alpha}[T_{k+1}-T_k>s|\mathcal{F}_{T_k}]& = \exp\left(-\int_0^s \lambda_{d_{T_k}}(\phi_t^{\alpha}(X_{T_k}^{\alpha}),\alpha_u(\nu_{T_k},d_{T_k},h_{T_k}))\mathrm{d}u\right)\\
& = \chi^\alpha_s(X_{T_k}^{\alpha}).
\end{align*}
Now for $x\in\Xi^\alpha$, $(t,s)\in\mathbb{R}_+^2$ and $k\in\mathbb{N^*}$,

\begin{align*}
\mathbb{P}_x^{\alpha}&[T_{k+1}>t+s|\mathcal{F}_t]\mathbf{1}_{\{T_k\leq t<T_{k+1}\}}\\
& = \mathbb{P}_x^{\alpha}[T_{k+1}-T_{k}>t+s-T_k|\mathcal{F}_t]\mathbf{1}_{\{0\leq t-T_k<T_{k+1}-T_k\}}\\
& = \exp\left(-\int_{t-T_k}^{t+s-T_k} \lambda_{d_{T_k}}(\phi_u^{\alpha}(X_{T_k}^{\alpha}),\alpha_u(\nu_{T_k},d_{T_k},h_{T_k}))\mathrm{d}u\right)\mathbf{1}_{\{0\leq t-T_k<T_{k+1}-T_k\}} \quad (*)\\
& = \exp\left(-\int_{0}^{s} \lambda_{d_{T_k}}(\phi_{u+t-T_k}^{\alpha}(X_{T_k}^{\alpha}),\alpha_{u+t-T_k}(\nu_{T_k}d_{T_k},h_{T_k}))\mathrm{d}u\right)\mathbf{1}_{\{0\leq t-T_k<T_{k+1}-T_k\}}.
\end{align*}

The equality (*) is the classical formula for jump processes (see Jacod \cite{Jacod}). On the other hand, 

\begin{align*}
\chi^{\alpha}_s(X_t^{\alpha})\mathbf{1}_{\{T_k\leq t<T_{k+1}\}} & = \exp\left(-\int_0^s \lambda_{d_{t}}\Big(\phi_u^{\alpha}(X_{t}^{\alpha}),\alpha_{u+\tau_t}(\nu_{t},d_{t},h_t)\Big)\mathrm{d}u\right)\mathbf{1}_{\{T_k\leq t<T_{k+1}\}}\\ 
& =  \exp\left(-\int_0^s \lambda_{d_{T_k}}\Big(\phi_u^{\alpha}(X_{t}^{\alpha}),\alpha_{u+t-T_k}(\nu_{T_k},d_{T_k},h_{T_k})\Big)\mathrm{d}u\right)\mathbf{1}_{\{T_k\leq t<T_{k+1}\}}\\
& =   \exp\left(-\int_0^s \lambda_{d_{T_k}}\Big(\phi_{u+t-T_k}^{\alpha}(X_{T_k}^{\alpha}),\alpha_{u+t-T_k}(\nu_{T_k}d_{T_k},h_{T_k})\Big)\mathrm{d}u\right)\\
& \qquad \qquad \mathbf{1}_{\{T_k\leq t<T_{k+1}\}},
\end{align*}

because $X_t^{\alpha} = \Big(\phi_{t-T_k}^{\alpha}(X_{T_k}^{\alpha}),d_{T_k},t-T_k,h_{T_k},\nu_{T_k}\Big)$ and by the flow property $\phi_u^{\alpha}(X_{t}^{\alpha})=\phi_{u+t-T_k}^{\alpha}(X_{T_k}^{\alpha})$ on  $\mathbf{1}_{\{T_k\leq t<T_{k+1}\}}$.

Thus we showed that for all $x\in\Xi^{\alpha}$, $(t,s)\in\mathbb{R}_+^2$ and $k\in\mathbb{N^*}$,

\[
\mathbb{P}_x^{\alpha}[T_{k+1}>t+s|\mathcal{F}_t]\mathbf{1}_{\{T_k\leq t<T_{k+1}\}} = \chi^{\alpha}_s(X_t^{\alpha})\mathbf{1}_{\{T_k\leq t<T_{k+1}\}}.
\]

Now if we write $T_t^{\alpha} := \inf\{s>t : X_s^{\alpha} \neq X_{s^-}^{\alpha}\}$ the next jump time of the process after $t$, we get

\begin{equation}\label{simpleMarkov}
\mathbb{P}_x^{\alpha}[T_t^{\alpha}>t+s|\mathcal{F}_t] = \chi^{\alpha}_s(X_t^{\alpha}),
\end{equation}
which means that, conditionally to $\mathcal{F}_t$, the next jump has the same distribution as the first jump of the process started at $X_t^{\alpha}$. Since the location of the jump only depends on the position at the jump time, and not before, equality (\ref{simpleMarkov}) is just what we need to prove our process verifies the simple Markov property.

To extend the proof to the strong Markov property, the application of Theorem (25.5) (Davis \cite{DavisBook}) on the characterization of jump process stopping times on Borel spaces is straightforward.

From the results of \cite{Riedler}, there is no difficulty in finding the expression of the extended generator $\mathcal{G}^{\alpha}$ and its domain:
\begin{itemize}
\item Let $\alpha\in\mathcal{A}$. The domain $D(\mathcal{G}^{\alpha})$ of $\mathcal{G}^{\alpha}$ is the set of all measurable $f: \Xi\rightarrow \mathbb{R}$ such that 
$t\mapsto f(\phi_t^{\alpha}(x),d,\tau+t,h,\nu)$
\\
 (resp. $(v_0,d_0,\tau_0,h_0,\nu_0,t,\omega)\mapsto f(v_0,d_0,\tau_0,h_0,\nu_0)-f(v(t^-,\omega),d(t^-,\omega),\tau(t^-,\omega),\\ h(t^-,\omega),\nu(t^-,\omega))$) is absolutely continuous on $\mathbb{R}_+$ for all $x=(v,d,\tau,h,\nu)\in \Xi^{\alpha}$ (resp. a valid integrand for the associated random jump measure). 
%For $f\in D(\mathcal{G}^{\alpha})$ and almost all $t$ the following holds:

%\begin{align}
%\mathcal{G}^{\alpha}f(v(t),d(t),\tau(t),\eta(t)) = & \frac{\mathrm{d}}{\mathrm{d}t} f(v(t),d(t),\tau(t),\eta(t))\label{generator}\\
				%& + \lambda^{\alpha}(X_t^{\alpha})\int_{\Xi}[f(\xi)-f(v(t),d(t),\tau(t),\eta(t))]\nonumber\\
				%& \qquad \qquad \qquad \hat{\mathcal{Q}}(\mathrm{d}\xi|v(t),d(t),\tau(t),\eta(t),\alpha(d(t),\tau(t),\eta(t)))\nonumber\\
				%= & \frac{\mathrm{d}}{\mathrm{d}t} f(v(t),d(t),\tau(t),\eta(t))\nonumber\\
				%& + \lambda^{\alpha}(X_t^{\alpha})\int_{D} \Big(f(v(t),p,0,v(t),\eta^{(2)}(t)+\tau(t))\nonumber\\
				%& \qquad \qquad -f(v(t),d(t),\tau(t),\eta(t))\Big)\mathcal{Q}_{\alpha}(\mathrm{d}p|v(t),d(t)).
%\end{align}

\item %For sufficiently regular functions $f$ in $D(\mathcal{G}^{\alpha})$ we can give an analytic expression for the derivative along the path in (\ref{generator}). 
Let $f$ be continuously differentiable w.r.t.  $v\in V$ and $\tau\in\mathbb{R}_+$. Define $h_v$ as the unique element of $V^*$ such that 
\begin{equation*}
\frac{\mathrm{d}f}{\mathrm{d}v} [v,d,\tau,h,\nu](y) = \langle h_v(v,d,\tau,h,\nu),y\rangle_{V^*,V} \qquad \forall y\in V,
\end{equation*} 
where $\frac{\mathrm{d}f}{\mathrm{d}v} [v,d,\tau,h,\nu]$ denotes the Fréchet-derivative of $f$ w.r.t $v\in E$ evaluated at $(v,d,\tau,h,\nu)$. If $h_v(v,d,\tau,h,\nu)\in V^*$ whenever $v\in V$ and is bounded in $V$ for bounded arguments then for almost every $t\in [0,T]$,
\begin{align}\label{extendedGenerator}
\mathcal{G}^{\alpha}f(v,d,\tau,h,\nu) & =   \frac{\partial}{\partial \tau}f(v,d,\tau,h,\nu)+\langle  h_v(v,d,\tau,h,\nu),Lv + f_d(v,\alpha_{\tau}(\nu,d,h)) \rangle_{V^*,V}\\
 & + \lambda_{d}(v,\alpha_{\tau}(\nu,d,h))\int_{D}[f(v,p,0,h+\tau,v)-f(v,d,\tau,h,\nu)]\mathcal{Q}_{\alpha}(\mathrm{d}p|v,d)\nonumber.
\end{align}
%with $\langle\cdot,\cdot\rangle$ denoting the duality pairing of $E$ and $E^*$.
\end{itemize}

The bound on the continuous component of the PDMP comes from the following estimation. Let $\alpha\in\mathcal{A}$ and $x:=(v,d,\tau,h,\nu)\in\Xi^{\alpha}$ and denote by $v^{\alpha}$ the first component of $X^{\alpha}$. Then for $t\in[0,T]$,

\begin{align}
||v_t^{\alpha}||_H & \leq ||S(t)v||_H + \int_0^t ||S(t-s)f_{d_s}(v_s^{\alpha},\alpha_{\tau_s}(\nu_s,d_s,h_s))||_H\mathrm{d}s\nonumber\\
& \leq M_S||v||_H + \int_0^t M_S(b_1+b_2||v_s^{\alpha}||_H)\mathrm{d}s\label{phiBound}\\
& \leq M_S(||v||_H+b_1T)e^{M_Sb_2T}\nonumber,
\end{align}			

by Gronwall's inequality.

\section{Proof of Lemma \ref{CONTINUITYLEMMA}}\label{proofContinuity}

\paragraph{Part 1.} Let's first look at the case when $w$ is bounded by a constant $w_{\infty}$ and define for $(z,\gamma) \in \Upsilon\times \mathcal{R}$

\[
W(z,\gamma)= \int_0^{T-h} \chi_s^{\gamma}(z)\left( \int_Z w(\phi_s^{\gamma}(z),d,h+s,u)\gamma(s)(\mathrm{d}u)\right)\mathrm{d}s
\]
Now take $(z,\gamma) \in \Upsilon\times \mathcal{R}$ and suppose $(z_n,\gamma_n)\rightarrow (z,\gamma)$. Let's write $z=(v,d,h)$ and $z_n = (v_n,d_n,h_n)$ for $n\in\mathbb{N}$. For $s\in[0,T]$, let $w_n(s,u):=w(\phi_s^{\gamma_n}(z_n),d_n,h_n+s,u)$ and $w(s,u):=w(\phi_s^{\gamma}(z),d,h+s,u)$. Let also $a_n = \min(T-h,T-h_n)$ and $b_n = \max(T-h,T-h_n)$. 
\clearpage
\noindent Then 
\begin{align*}
|W(z_n,\gamma_n)-W(z,\mu)| &\leq \left|\int_{a_n}^{b_n}  \chi_s^{\gamma_n}(z_n)\int_Z w_n(s,u) \gamma_n(s)^n(\mathrm{d}u)\mathrm{d}s\right|\\
& \qquad +  \int_0^{T-h} \chi_s^{\gamma_n}(z_n) \int_Z |w_n(s,u)-w(s,u)|  \gamma_n(s)(\mathrm{d}u)\mathrm{d}s\\
& \qquad + \Big|\int_0^{T-h} \chi_s^{\gamma_n}(z_n)  \int_Z w(s,u) \gamma_n(s)(\mathrm{d}u)\mathrm{d}s\\
& \qquad\qquad\qquad -\int_0^{T-h} \chi_s^{\gamma}(z) \int_Z w(s,u)  \gamma_n(s)(\mathrm{d}u)\mathrm{d}s\Big|\\
& \qquad + \Big|\int_0^{T-h} \chi_s^{\gamma}(z) \int_Z w(s,u)  \gamma_n(s)(\mathrm{d}u)\mathrm{d}s\\
& \qquad\qquad\qquad -\int_0^{T-h}\chi_s^{\gamma}(z) \int_Z w(s,u)  \gamma_n(s)(\mathrm{d}u)\mathrm{d}s\Big|
\end{align*}
The first term on the right-hand side converges to zero for $n\rightarrow \infty$ since the integrand is bounded.
\begin{align*}
 \int_0^{T-h} \chi_s^{\gamma_n}(z_n) \int_Z |w_n(s,u)-w(s,u)|  \gamma_n(s)(\mathrm{d}u)\mathrm{d}s & \leq \int_0^{T-h} e^{-\delta s} \sup_{u\in U}|w_n(s,u)-w(s,u)|\mathrm{d}s\\
&\qquad \xrightarrow[n\rightarrow \infty ]{} 0 \\
\end{align*}
by dominated convergence and the continuity of $w$ and of $\phi$ proved in Lemma \ref{ContinuityTheo}.
\begin{align*}
\left|\int_0^{T-h}\left(\chi_s^{\gamma_n}(z_n)-\chi_s^{\gamma}(z)\right) \int_Z w(s,u)  \mu^n_s(\mathrm{d}u)\mathrm{d}s\right| & \leq w_{\infty} \int_0^{T-h}\left|\chi_s^{\gamma_n}(z_n)-\chi_s^{\gamma}(z)\right|\mathrm{d}s\\
& \qquad \xrightarrow[n\rightarrow \infty ]{} 0 \\
\end{align*}
again by dominated convergence, provided that for $s\in [0,T]$, the convergence \\ $ \chi_s^{\gamma_n}(z^n) \xrightarrow[n\rightarrow \infty]{} \chi_s^{\gamma}(z)$ holds. For this convergence to hold it is enough that for $t\in [0,T],$
\[
\int_0^t \int_Z \lambda_{d_n}(\phi_s^{\gamma_n}(z_n),u)\gamma_n(s)(\mathrm{d}u)\mathrm{d}s  \xrightarrow[n\rightarrow \infty ]{} \int_0^t \int_Z \lambda_{d}(\phi_s^{\gamma}(z),u)\gamma(s)(\mathrm{d}u)\mathrm{d}s.
\]
It is enough to take $n$ large enough so that $d_n=d$ and to write
\begin{align*}
& \int_0^t \left( \int_Z \lambda_{d}(\phi_s^{\gamma_n}(z_n),u)\gamma_n(s)(\mathrm{d}u) - \int_Z \lambda_{d}(\phi_s^{\gamma}(z),u)\gamma(s)(\mathrm{d}u)\right)\mathrm{d}s  =\\
&\qquad \int_0^t \int_Z \left(  \lambda_{d}(\phi_s^{\gamma_n}(z_n),u)-\lambda_{d}(\phi_s^{\gamma}(z),u)\right) \gamma_n(s)(\mathrm{d}u) \mathrm{d}s\\
&\qquad + \int_0^t \left(\int_Z\lambda_{d}(\phi_s^{\gamma}(z),u)\gamma_n(s)(\mathrm{d}u) - \int_Z \lambda_{d}(\phi_s^{\gamma}(z),u)\gamma(s)(\mathrm{d}u) \right)\mathrm{d}s
\end{align*}
By the local Lipschitz property of $\lambda_{d}$, 
\begin{equation*}
\left|\int_0^t \int_Z \left(  \lambda_{d}(\phi_s^{\gamma_n}(z_n),u)-\lambda_{d}(\phi_s^{\gamma}(z),u)\right) \gamma_n(s)(\mathrm{d}u) \mathrm{d}s\right| \leq l_{\lambda} \int_0^t ||\phi_s^{\gamma_n}(z_n) - \phi_s^{\gamma}(z)||_H\mathrm{d}s
\end{equation*}
and $\int_0^t ||\phi_s^{\gamma_n}(z_n) - \phi_s^{\gamma}(z)||_E\mathrm{d}s \leq t\sup_{s\in[0,T]} || \phi_s^{\gamma_n}(z_n) - \phi_s^{\gamma}(z)||_H \xrightarrow[n\rightarrow \infty]{} 0$ by Lemma \ref{ContinuityTheo}. The second term converges to zero by the definition of the weakly* convergence in $L^{\infty}(M(Z))$.

\medskip

\paragraph{Part 2.} In the general case where $|w|\leq w_c B^{*}$, let $w^B(z,u) = w(z,u)-c_wB^*(z)\leq 0$ for $(z,u)\in \Upsilon\times U$. $w^B$ is a continuous function and there exists a nonincreasing sequence $(w_n^B)$ of bounded continuous functions such that $w_n^B  \xrightarrow[n\rightarrow \infty]{}  w^B$. By the first part of the proof we know that 
\[
W_n(z,\gamma) = \int_0^{T-h} \chi_s^{\gamma}(z)\int_Z w_n^B(\phi_s^{\gamma}(z),d,h+s,u)\mu_s(\mathrm{d}u)\mathrm{d}s
\]
is bounded, continuous, decreasing and converges to 
\[
W(z,\gamma) - c_w\int_0^{T-h} \chi_s^{\gamma}(z)b( \phi_s^{\gamma}(z))e^{\zeta^*(T-h-s)}\mathrm{d}s
\]
which is thus upper semicontinuous. Since $b$ is a continuous bounding function it is easy to show that 
\[
(z,\gamma) \rightarrow \int_0^{T-h} \chi_s^{\gamma}(z)b( \phi_s^{\gamma}(z))e^{\zeta^*(T-h-s)}\mathrm{d}s
\]
is continuous so that in fact $W$ is upper semicontinuous. 
Now considering the function $w_B(z,u) = -w(z,u)-c_wB^*(z)\leq 0$ we easily show that $W$ is also lower semicontinuous so that finally $W$ is continuous.

Now the continuity of the applications $(z,\gamma)\rightarrow c'(z,\gamma)$ and $(z,\gamma)\rightarrow (\mathcal{Q}'w)(z,\gamma)$ comes from the previous result applied to the continuous functions defined for $(z,u) \in \Upsilon\times U$ by $w_1(z,u) := c(v,u)$ and $w_2(z,u):=\lambda_d(v,u)\int_D w(v,r,h)\mathcal{Q}(\mathrm{d}r|v,d,u)$ with $z=(v,d,h)$. Here the different assumptions of continuity \ref{hyp:lambda}2.3., \ref{hyp:cost}1. and \ref{hyp:Q} are needed.

\end{appendices}

\nocite{*}
\bibliographystyle{plain}
\bibliography{Preprint_PDMP}

\end{document}